\documentclass[final,leqno,onefignum,onetabnum]{siamltex1213}

\usepackage{color,epsfig,amssymb,amsmath,subfigure,url}
\usepackage{amsfonts}
\usepackage{multirow}
\usepackage{epstopdf}
\usepackage{latexsym}
\usepackage{enumerate}
\usepackage{tabularx}
\usepackage{booktabs}

\newtheorem{thm}{Theorem}[section]

\newtheorem{thm4}{Theorem}[section]

\newtheorem{lem}[thm]{Lemma}
\newtheorem{prop}[thm]{Proposition}
\newtheorem{defn}[thm]{Definition}
\numberwithin{equation}{section}

\newtheorem{example}[thm4]{Example}


\def \spn {\,{\rm span}\,}

\newcommand{\se}{\text{e}}


\newcommand{\bN}{\mathbb{N}}

\newcommand{\va}{\boldsymbol{a}}
\newcommand{\vb}{\boldsymbol{b}}
\newcommand{\vc}{\boldsymbol{c}}
\newcommand{\vx}{\boldsymbol{x}}

\def \vj {{\boldsymbol{j}}}
\def \vl {{\boldsymbol{l}}}
\def \vm {{\boldsymbol{m}}}
\def \vn {{\boldsymbol{n}}}

\def \vlambda {{\boldsymbol{\lambda}}}
\def \vepsilon {{\boldsymbol{\epsilon}}}

\newcommand{\vf}{\boldsymbol{f}}

\newcommand{\vone}{\boldsymbol{1}}

\newcommand{\cG}{\mathcal{G}}
\newcommand{\cH}{\mathcal{H}}

\newcommand{\mA}{\mathsf{A}}
\newcommand{\mtA}{\tilde{\mathsf{A}}}
\newcommand{\mPsi}{\mathsf{\Psi}}
\newcommand{\mPhi}{\mathsf{\Phi}}
\newcommand{\mOmega}{\mathsf{\Omega}}


\newcommand{\tpsi}{\tilde{\psi}}


\def\oneshot#1{\mathop{\mathrm{#1}}\limits}

\title{A Two-Dimensional Inverse Frame Operator Approximation Technique \thanks{This work is supported in part by grants NSF-DMS 1216559, NSF-DMS 1521600, NSF-DMS 1521661, NSF 1502640,  and AFOSR FA9550-15-1-0152.}}

\author{Guohui Song \footnotemark[1]
\and Jacqueline Davis \footnotemark[2] 
\and Anne Gelb \footnotemark[2]
}

%
%
%
%
%
%
%
%

\begin{document}
\date{}
\maketitle

\renewcommand{\thefootnote}{\fnsymbol{footnote}}

\footnotetext[1]{Department of Mathematics, Clarkson University, Potsdam, NY 13699. (\email{gsong@clarkson.edu})}
\footnotetext[2]{School of Mathematical and Statistical Sciences,
         Arizona State University,
         P.O. Box 871804,
         Tempe, AZ 85287-1804. (\email{Jacqueline.T.Davis@asu.edu}) (\email{anne.gelb@asu.edu})}

\begin{abstract}
The ability to efficiently and accurately construct an inverse frame operator is critical for establishing the utility of numerical frame approximations.  Recently, the {\em admissible frame} method was developed to approximate inverse frame operators for one-dimensional problems. Using the admissible frame approach, it is possible to project the corresponding frame data onto a more suitable (admissible) frame, even when the sampling frame is only weakly localized.  As a result, a target function may be approximated as a finite frame expansion with its asymptotic convergence solely dependent on its smoothness.  In this investigation, we seek to expand the admissible frame approach  to two dimensions, which requires some additional constraints.  We prove that the admissible frame technique converges in two dimensions and then demonstrate its usefulness with some numerical experiments that use sampling patterns inspired by applications that sample data non-uniformly in the Fourier domain.
\end{abstract}

\begin{keywords}
Inverse Frame Operator; Two-Dimensional Frames; Fourier Frames; Localized Frames; Numerical Frame Approximation.
\end{keywords}

\begin{AMS}
42C15; 42A50; 65T40
\end{AMS}

\section{Introduction}
\label{sec:introduction}

In applications such as magnetic resonance imaging (MRI) and synthetic aperture radar (SAR), data may be sampled as non-uniform Fourier coefficients (see e.g. \cite{BW00,FPC,GL,pipe,sedarat,Aditya09}).  The current methodology for recovering an underlying image or extracting features from the image often involves interpolating or approximating the given data to obtain integer Fourier coefficients and then applying the inverse FFT.  The combined process is often referred to as the non-uniform FFT (NFFT), \cite{DR,FPC,GL,Fourmont}, or the convolutional gridding algorithm, \cite{pipe}.  Although the NFFT is computationally efficient, there can be significant issues regarding its robustness.  Indeed, it was shown in \cite{GelbSong2014, Aditya09} that there are sampling patterns for which the NFFT fails to converge, regardless of the smoothness properties of the underlying function.  Moreover, since the target image is typically only piecewise smooth, the Gibbs phenomenon causes additional error that must also be addressed.  In \cite{GelbHines,GelbSong2014,Song2011b}, the non-uniform Fourier reconstruction problem was recast as a finite {\em Fourier frame approximation}, \cite{DS52}, in efforts to combat the ill-conditioning caused by the resampling of Fourier data.  We remark that there are an abundance of algorithms that recover piecewise smooth images from non-uniform Fourier data. For example, techniques that use $l^1$ regularization are becoming increasingly prevalent, \cite{LDP}.  While such algorithms have their advantages, their reliance on iterative solvers make them somewhat difficult to analyze.  Further, we anticipate that the admissible frame approach described here may be used to initialize iterative algorithms, and ultimately make them more efficient and robust.

One of the main difficulties in approximating a function from its frame coefficients, independent of the smoothness properties of the underlying image, lies in the construction of an accurate and efficient approximation to the inverse frame operator.  The frame algorithm designed in \cite{DS52} and accelerated in  \cite{MR1955936, Grochenig1993} is iterative, with its speed inherently depending on the frame bounds.


An alternative algorithm for constructing the inverse frame operator was developed for one-dimensional problems in \cite{Song2011b},  notably for Fourier frames.  It was established there that sampling with {\em localized frames} improves both the accuracy of the numerical frame approximation as well as the robustness and efficiency of the (finite) frame operator inversion. Moreover, in applications where the given data may not constitute a localized frame, a technique was devised to project the corresponding frame data onto a more suitable frame.  As a result, the target function may be approximated as a finite expansion with its asymptotic convergence solely dependent on its smoothness.  We note that if the target function is only piecewise smooth, the resulting Fourier frame approximation could be post-processed to achieve a high order reconstruction in smooth regions, \cite{GelbHines}.  In the current investigation we seek to expand these results to two dimensions, which requires some additional constraints.  Following the approach in \cite{Song2011b}, we first analyze the convergence properties of the Casazza-Christensen method for approximating the two-dimensional inverse frame operator for localized frames, \cite{Casazza2000, Christensen2000}. We then consider the case when the frame is only {\em weakly localized}, that is, the localization rate $s$ in Definition \ref{def:2dlocalized} is less than 2. In this case, the Casazza-Christensen algorithm may be very slow to converge.  The numerical algorithm developed in \cite{Song2011b} uses the {\em admissible frames} approach to approximate  $S^{-1}$ for one-dimensional weakly localized frames. In addition, this technique yields faster convergence than the Casazza-Christensen method. Thus in this paper we are also motivated to extend the the admissible frame methodology to two dimensions for the weakly localized case.

The generalization of the admissible frame methodology to two dimensions will be useful in a variety of applications.  For example, some data acquisition techniques in MRI favor collecting Fourier samples non-uniformly, such as in a spiral \cite{pipe, sedarat} or rosette trajectory, \cite{Noll97, noll1998}. Synthetic aperture radar (SAR) acquisition data can also be described as non-uniform Fourier samples, \cite{FPC}.  Moreover, even in cases where the collection scheme is presumably uniform, machine error causes the samples to be ``jittered'' away from this distribution.   Such situations infer that the data may be viewed as coefficients of weakly localized Fourier frames.

The paper is organized as follows:
Section \ref{sec:review} reviews some fundamental aspects of frame theory in two dimensions.  In Section \ref{sec:localizedframes} we establish the convergence rate of the Casazza-Christensen method of approximating the inverse frame operator for localized frames, \cite{Casazza2000, Christensen2000}. However, the convergence rate fails to hold when the sampling frame is only weakly localized.  To overcome this difficulty we propose a new method of approximating the inverse frame operator and prove its convergence rate in Section \ref{sec:admissibleframes}.  In Section \ref{sec:numerical} we use this approximation technique to develop a new numerical frame approximation method.  We demonstrate the effectiveness of our method  with some numerical experiments.  Concluding remarks are provided in Section \ref{sec:conclusion}.

\section{Two-Dimensional Frames}
\label{sec:review}
Below we provide a brief introduction to the problem of sampling with frames in two dimensions, starting with the definition of a two-dimensional frame.

\begin{defn}
\label{def:frame}
Suppose $\cH$ is a separable Hilbert space. We say $\{\psi_{\vj}: \vj=(j_1, j_2)\in \bN^2\} \subseteq \cH$ is a frame for $\cH$ if for there exist some positive constants $A$ and $B$ such that all $f\in \cH$
\begin{equation*}
A\|f\|^2 \leq \sum_{\vj\in \bN^2}\bigl|\langle f, \psi_{\vj} \rangle\bigr|^2 \leq B\|f\|^2.
\end{equation*}
The associated frame operator $S:\cH\rightarrow \cH$ is defined as
\begin{equation*}
Sf = \sum_{\vj\in \bN^2} \langle f, \psi_{\vj} \rangle \psi_{\vj}, \quad f\in \cH,
\end{equation*}
where $\langle f, \psi_{\vj} \rangle$ are often referred to as frame coefficients. 
\end{defn}

To recover the underlying function $f$ from its frame coefficients, we have
\begin{equation}
f = S^{-1}Sf = \sum_{\vj\in \bN^2} \langle f, \psi_{\vj} \rangle \tpsi_{\vj},
\label{eq:f_frame}
\end{equation}
where $\tpsi_{\vj}=S^{-1}\psi_{\vj}$ is called the (canonical) dual frame.

In applications such as magnetic resonance imaging (MRI) or synthetic aperture radar (SAR), we typically seek to recover an image or features of an image from its (finite) frame coefficients, which requires an efficient algorithm for approximating the dual frame or the inverse frame operator $S^{-1}$, usually not available in closed form.\footnote{For applications such as MRI or SAR, the data are collected as non-uniform Fourier samples, and a variety of techniques have been developed for image reconstruction in these modalities. As described in \cite{GelbSong2014}, forward algorithms such as the non-uniform FFT can be considered as approximations to the finite Fourier frame approximation.}  Many numerical methods exist for approximating  (\ref{eq:f_frame}).  For example, a general method for approximating $S^{-1}$, which we will refer to as the Casazza-Christensen method,  was proposed in \cite{Casazza2000}, and its convergence was proved in \cite{Casazza2000, Christensen2000, Christensen2003} for any general frame.  However, obtaining a {\em rate} of convergence requires some extra conditions on the frames.  One such condition, used in \cite{Song2011b} to acquire the convergence rate of the Casazza-Christensen method for one-dimensional problems,  is the {\it localization} of frames, and is defined as

 \begin{defn}
\label{def:localized}
 Suppose $\{\psi_j: j\in \bN\}$ is a frame in $\cH$. We say that $\{\psi_j: j\in \bN\}$ is {\em intrinsically (self-) localized} if there exists a positive constant $c$ such that
\begin{equation*}
\left|\langle \psi_j, \psi_l \rangle\right| \leq c (1+ |j-l|)^{-s}, \quad  j,l\in \bN,
\end{equation*}
with $s>1$.
\end{defn}

The analogous definition in two dimensions, which will be used to generalize the convergence analysis for the Casazza-Christensen method is

\begin{defn}\label{def:2dlocalized}
 Suppose $\{\psi_{\vj}: \vj \in \bN^2\}$ is a frame in $\cH$. We say that $\{\psi_{\vj}: \vj \in \bN^2\}$ is {\em intrinsically (self-) localized} if there exist positive constants $\gamma>0$ and $s>2$ such that
\begin{equation}\label{eq:localizedcondition}
\left|\langle \psi_{\vj}, \psi_{\vl} \rangle\right| \leq \gamma (1+ \|\vj-\vl\|_2)^{-s}, \quad  \vj, \vl\in \bN^2,
\end{equation}
where $\|\cdot\|_2$ denotes the Euclidean norm.
\end{defn}


\section{Approximating $S^{-1}$ with localized frames in 2D}\label{sec:localizedframes}
We will assume in this section that the frame $\{\psi_{\vj}: \vj \in \bN^2\}$ is intrinsically localized, that is, it satisfies \eqref{eq:localizedcondition} for some $s>2$. We will analyze the convergence properties, in particular, provide a convergence rate, for the Casazza-Christensen approximation of the inverse frame operator $S^{-1}$. To this end, we first briefly review this method in two-dimensional notation.

For any $\vn=(n_1, n_2)\in \bN^2$, let $\cH_{\vn}:=\spn\{\psi_{\vj}: \vone\leq \vj\leq \vn\}$, where $\vone\leq \vj\leq \vn$ denotes $1\leq j_1\leq n_1$ and $1\leq j_1\leq n_2$, be the finite-dimensional subspace of $\cH$. Observe that $\{\psi_{\vj}: \vone\leq \vj\leq \vn\}$ is a frame for $\cH_\vn$ (c.f. \cite{Christensen2003}) with the frame operator $S_{\vn}: \cH \rightarrow \cH_{\vn}$ given by
\begin{equation}\label{finiteframe}
S_{\vn} f:= \sum_{\vone\leq \vj\leq \vn} \langle f, \psi_{\vj} \rangle \psi_{\vj}, \quad f\in \cH.
\end{equation}
For $\vm, \vn\in \bN^2$, suppose $P_\vn$ is the orthogonal projection from $\cH$ onto $\cH_\vn$ and $V_{\vn,\vm}$ is the restriction of $P_\vn S_\vm$ to $\cH_\vn$:
\begin{equation}
\label{eq:V_nm}
V_{\vn, \vm}:= P_\vn S_\vm |_{\cH_\vn}.
\end{equation}
Note that for any $\vn\in \bN$, we can always find large enough $\vm(\vn)\in \bN^2$ (with both $m_1$ and $m_2$ large enough) depending on $\vn$ such that $V_{\vn, \vm(\vn)}$ is invertible on $\cH_\vn$, \cite{Casazza2000}. The Casazza-Christensen method is to use $V_{\vn, \vm(\vn)}^{-1}P_\vn$ as an approximation of the inverse frame operator $S^{-1}$. It was shown in \cite{Casazza2000} that there exists $\vm(\vn)\in \bN^2$ depending $\vn$ such that for any $f\in \cH$,
\begin{equation}
\label{eq:error}
\|S^{-1}f - V_{\vn, \vm(\vn)}^{-1}P_\vn f \| \rightarrow 0, \quad \mbox{ as } \vn\rightarrow \infty,
\end{equation}
which requires both $n_1 \rightarrow \infty$ and $n_2 \rightarrow \infty$.  Although (\ref{eq:error}) was only proven in one dimension, it is trivial to extend the result to higher dimensions.  Moreover, the Casazza-Christensen method works for any general frame without any extra conditions.  However, there are still two remaining issues  to be addressed. First, we only know for any $\vn\in \bN^2$, there exists a large enough $\vm(\vn)$ such that the convergence holds, but there is no explicit formula for computing $\vm(\vn)$, which is needed in practical applications. Second, no convergence rate is given for (\ref{eq:error}).

Below we propose an explicit formula to compute $\vm(\vn)$ and also estimate the convergence rate of the Casazza-Christensen method.  As in the one-dimensional case discussed in \cite{Song2011b}, we will employ the localization properties of localized frames. To this end, we impose the following smoothness assumption on $f$:
\begin{defn}\label{def:smooth_assum}
The function $f$ satisfies the smoothness assumption with respect to a frame $\{\psi_{\vj}: \vj \in \bN^2\}$  if for $s > 2$ there exists a positive constant $c_0$ such that
\begin{equation}\label{eq:smoothassumption}
\bigl| \langle f, \psi_{\vj} \rangle \bigr| \leq c_0 \|\vj\|_{2}^{-s}, \quad \vj\in \bN^{2}.
\end{equation}
\end{defn}
Observe from the the localization property \eqref{eq:localizedcondition} that for any $\vl\in \bN^2$, $\psi_\vl$ satisfies (\ref{eq:smoothassumption}). Therefore the smoothness assumption is also reasonable for $f$ since the main objective is to approximate (\ref{eq:f_frame}).

We begin by decomposing (\ref{eq:error}) as
\begin{eqnarray}
\| S^{-1}f - V_{\vn, \vm}^{-1}P_{\vn} f\|
&\leq &  \|S^{-1}f  - P_{\vn} S^{-1} f\|
+ \|P_{\vn} S^{-1} f - V_{\vn, \vm}^{-1}P_{\vn}S_{\vm} S^{-1} f \|\nonumber\\
&+& \| V_{\vn, \vm}^{-1}P_{\vn}S_{\vm} S^{-1} f - V_{\vn, \vm}^{-1}P_{\vn} f\|.
\label{eq:error_decomp1}
\end{eqnarray}
Since  $V_{\vn, \vm}^{-1} P_{\vn}S_{\vm} g=g$ for $g\in \cH_n$, the second term on the right hand side of (\ref{eq:error_decomp1}) can be reformulated as
\begin{equation*}
\|P_{\vn} S^{-1} f - V_{\vn, \vm}^{-1}P_{\vn}S_{\vm} S^{-1} f \| = \|V_{\vn,\vm}^{-1}P_{\vn}S_{\vm}(P_{\vn} S^{-1}f -S^{-1}f)  \|,
\end{equation*}
while the third term can be written as
\begin{equation*}
\| V_{\vn, \vm}^{-1}P_{\vn}S_{\vm} S^{-1} f - V_{\vn, \vm}^{-1}P_{\vn} f\| = \|V_{\vn,\vm}^{-1}P_{\vn} (S_{\vm} - S)S^{-1}f \|.
\end{equation*}
It therefore follows that
\begin{eqnarray}\label{error-decomposition-inv}
\| S^{-1}f - V_{\vn,\vm}^{-1}P_{\vn} f\|
&\leq&  \|S^{-1}f  - P_{\vn} S^{-1} f\|\nonumber\\
&+& \|V_{\vn,\vm}^{-1}P_{\vn}S_{\vm}(S^{-1}f -P_{\vn} S^{-1}f)  \|+  \|V_{\vn,\vm}^{-1}P_{\vn} (S- S_{\vm})S^{-1}f \|.
\end{eqnarray}

The first term in \eqref{error-decomposition-inv} can be viewed as the projection error of $S^{-1}f$.   We need the following two lemmas to estimate its error.

\begin{lem}\label{lem:dualcoefficients}
If $\{\psi_\vj\}$ is a localized frame as in \eqref{eq:localizedcondition} and assumption \eqref{eq:smoothassumption} holds, then there exists a positive constant $c$ such that
\begin{equation*}
\bigl| \langle f, \tpsi_\vj \rangle \bigr| \leq c \|\vj\|_{2}^{-s}.
\end{equation*}
\end{lem}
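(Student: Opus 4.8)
The plan is to combine the frame reconstruction formula \eqref{eq:f_frame}, the smoothness assumption \eqref{eq:smoothassumption}, and the fact that the canonical dual of an intrinsically localized frame is again intrinsically localized with the same rate. First I would use \eqref{eq:f_frame} to write
\[
\langle f, \tpsi_\vj \rangle \;=\; \Bigl\langle \sum_{\vl \in \bN^2} \langle f, \psi_\vl \rangle\, \tpsi_\vl,\; \tpsi_\vj \Bigr\rangle \;=\; \sum_{\vl \in \bN^2} \langle f, \psi_\vl \rangle\, \langle \tpsi_\vl, \tpsi_\vj \rangle ,
\]
where moving the inner product inside the series is justified by its continuity (and, once the decay estimates below are in hand, by absolute convergence). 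This reduces the lemma to two ingredients: a pointwise off-diagonal decay bound for the dual Gram entries $\langle \tpsi_\vl, \tpsi_\vj \rangle$, and a discrete convolution estimate.

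For the first ingredient I would invoke the localization of the dual frame: there is $\gamma' > 0$ with $|\langle \tpsi_\vl, \tpsi_\vj \rangle| \le \gamma'(1 + \|\vl - \vj\|_2)^{-s}$. This follows from hypothesis \eqref{eq:localizedcondition}, which says precisely that the Gram matrix $G = \bigl(\langle \psi_\vl, \psi_\vj \rangle\bigr)_{\vj,\vl}$ has polynomial off-diagonal decay of order $s$, together with the fact that the class of matrices on $\ell^2(\bN^2)$ with such decay of order $s > 2$ is a Banach $*$-algebra that is inverse-closed in $B(\ell^2(\bN^2))$ (Jaffard's theorem); since the dual Gram matrix $\bigl(\langle \tpsi_\vl, \tpsi_\vj \rangle\bigr)$ is the (Moore--Penrose) pseudo-inverse of $G$, it lies in the same class. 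This is the two-dimensional counterpart of the dual-frame localization used in \cite{Song2011b}, and it is here that the strict inequality $s > 2$ is essential.

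For the second ingredient I would establish
\[
\sum_{\vl \in \bN^2} \|\vl\|_2^{-s}\,\bigl(1 + \|\vl - \vj\|_2\bigr)^{-s} \;\le\; c\,\|\vj\|_2^{-s}, \qquad \vj \in \bN^2, \quad s > 2,
\]
by splitting $\bN^2$ into the regions $\{\|\vl\|_2 \le \tfrac{1}{2}\|\vj\|_2\}$, $\{\|\vl - \vj\|_2 \le \tfrac{1}{2}\|\vj\|_2\}$ and the remainder. On the first region $(1+\|\vl-\vj\|_2)^{-s} = O(\|\vj\|_2^{-s})$ and $\sum_\vl \|\vl\|_2^{-s} < \infty$; on the second $\|\vl\|_2^{-s} = O(\|\vj\|_2^{-s})$ and $\sum_\vl (1+\|\vl-\vj\|_2)^{-s} < \infty$; on the remainder both factors are $O(\|\vj\|_2^{-s})$ and the surviving tail sum is controlled again using $s > 2$. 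Inserting this together with \eqref{eq:smoothassumption} and the dual localization bound into the displayed identity yields $|\langle f, \tpsi_\vj \rangle| \le c_0\gamma' c\,\|\vj\|_2^{-s}$, as claimed. I expect the main obstacle to be the first ingredient: carefully transferring localization from $\{\psi_\vj\}$ to $\{\tpsi_\vj\}$, in particular dealing with the fact that $G$ is only invertible on its (closed) range and invoking the appropriate spectral-invariance/pseudo-inverse version of Jaffard's result; the convolution estimate, though it also requires $s > 2$, is routine.
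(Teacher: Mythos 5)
Your proposal is correct and follows essentially the same route as the paper: the dual-frame expansion $\langle f, \tpsi_\vj\rangle = \sum_\vl \langle f,\psi_\vl\rangle\langle\tpsi_\vl,\tpsi_\vj\rangle$, transfer of the localization \eqref{eq:localizedcondition} to the dual Gramian via spectral invariance of the polynomially off-diagonal-decaying matrix class (the paper cites the Wiener-type lemma of Fornasier--Gr\"ochenig and Sun together with the identity $\mtA=(\mA^\dagger)^2\mA$, which is your pseudo-inverse statement), and finally the discrete convolution bound $\sum_\vl\|\vl\|_2^{-s}(1+\|\vj-\vl\|_2)^{-s}\leq c\|\vj\|_2^{-s}$ proved by splitting according to which factor is at least $\tfrac12\|\vj\|_2$, using $s>2$ for summability. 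The only differences are cosmetic (a three-region versus two-region split in the convolution estimate, and naming Jaffard's theorem rather than the specific references).
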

\begin{proof}
We first derive an estimate on $\langle \tpsi_\vj, \tpsi_\vl \rangle$ through Wiener's Lemma. Let $\mA$ be the infinite Grammian $[\langle \psi_\vj, \psi_\vl \rangle]_{\vj, \vl\in \bN^2}$ and $\mtA$ be the infinite Grammian $[\langle \tpsi_\vj, \tpsi_\vl \rangle]_{\vj, \vl\in \bN^2}$. It was shown in \cite{Fornasier2005} that $\mtA=(\mA^\dagger)^2\mA$, where $\mA^\dagger$ is the pseudo-inverse of $\mA$. By Wiener's Lemma on matrices with polynomial decay \cite{Fornasier2005, Sun2005, Sun2007, Sun2011}, the infinite Grammian matrix $\mtA$ has the same polynomial decay as $\mA$. Thus there exists a positive constant $c_1$ such that
\begin{equation}\label{eq:dualframedecay}
\left|\langle \tpsi_{\vj}, \tpsi_{\vl} \rangle\right| \leq c_1 (1+ \|\vj-\vl\|_2)^{-s}, \quad  \vj, \vl\in \bN^2.
\end{equation}

An estimate of $\langle f, \tpsi_\vj \rangle$ follows from the dual frame property that for $f= \sum_{\vl\in \bN^2} \langle f, \psi_\vl \rangle \tpsi_\vl$, we have
\begin{equation*}
\langle f, \tpsi_\vj \rangle = \sum_{\vl\in \bN^2} \langle f, \psi_\vl \rangle \langle \tpsi_\vl, \tpsi_\vj \rangle,
\end{equation*}
for any $\vj\in \bN^2.$  Substituting \eqref{eq:smoothassumption} and \eqref{eq:dualframedecay} into the above equality yields
\begin{equation}\label{eq:dualcoeffest}
\bigl| \langle f, \tpsi_\vj \rangle \bigr| \leq c_0 c_1 \sum_{\vl\in \bN^2} \|\vl\|_2^{-s} (1+ \|\vj-\vl\|_2)^{-s}.
\end{equation}
We next estimate the summation term in the above inequality using a standard technique employed in the proof of Wiener's Lemma \cite{Grochenig2004, Sun2005}. By triangular inequality, for any $\vj\in \bN^2$, $1+ \|\vj-\vl\|_2 + \|\vl\|_2 \geq \|\vj\|_2$. Thus for any $\vj\in \bN^2$, either $1+ \|\vj-\vl\|_2$ or $\|\vl\|_2$ is no less than $\frac{1}{2}\|\vj\|_2$. Consequently,
\begin{equation*}
\sum_{\vl\in \bN^2} \|\vl\|_2^{-s} (1+ \|\vj-\vl\|_2)^{-s} \leq \sum_{\vl\in \bN^2} 2^s \|\vj\|_2^{-s} (1+ \|\vj-\vl\|_2)^{-s} + \sum_{\vl\in \bN^2} \|\vl\|_2^{-s} 2^s \|\vj\|_2^{-s}.
\end{equation*}
Moreover, when $s>2$, both $\sum_{\vl\in \bN^2}(1+ \|\vj-\vl\|_2)^{-s}$ and $\sum_{\vl\in \bN^2} \|\vl\|_2^{-s}$ are summable and in particular are bounded by a fixed constant. Thus there exists a positive constant $c_2$ such that
$
\sum_{\vl\in \bN^2} \|\vl\|_2^{-s} (1+ \|\vj-\vl\|_2)^{-s} \leq c_2 \|\vj\|_2^{-s}.
$
The desired result follows from substituting it into \eqref{eq:dualcoeffest}.
\end{proof}

Lemma \ref{lem:dualcoefficients} yields an upper bound for the frame coefficients with respect to the dual frame, while Lemma \ref{lem:projectionerror} provides an estimate on the projection error for any $f$ satisfying the smooth assumption \eqref{eq:smoothassumption}.

\begin{lem}\label{lem:projectionerror}
If $\{\psi_\vj\}$ is a localized frame as in \eqref{eq:localizedcondition} and assumption \eqref{eq:smoothassumption} holds, then there exists a positive constant $c$ such that for any $\vn\in \bN^2$
\begin{equation*}
\|f- P_\vn f\|\leq c n_1^{-\frac{s-1}{2}} + c n_2^{-\frac{s-1}{2}}.
\end{equation*}
\end{lem}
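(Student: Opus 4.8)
The plan is to write $f = \sum_{\vl \in \bN^2} \langle f, \psi_\vl\rangle \tpsi_\vl$ using the dual frame expansion, and then bound the tail of this series outside the index box $\vone \le \vj \le \vn$. First I would observe that, since $P_\vn$ is the orthogonal projection onto $\cH_\vn = \spn\{\psi_\vj : \vone \le \vj \le \vn\}$, for \emph{any} $g \in \cH_\vn$ we have $\|f - P_\vn f\| \le \|f - g\|$. A natural choice of comparison element is the partial sum $g = \sum_{\vone \le \vl \le \vn} \langle f, \psi_\vl\rangle \tpsi_\vl$, which lies in $\cH_\vn$ because each $\tpsi_\vl = S^{-1}\psi_\vl$ — wait, this is not obviously in $\cH_\vn$. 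So instead I would take $g$ to be \emph{any} element of $\cH_\vn$ that approximates $f$ well; the cleanest route is to use that $\{\psi_\vj\}$ restricted to the box is a frame for $\cH_\vn$ and compare against $P_\vn f$ directly via a different device: bound $\|f - P_\vn f\|$ by a duality/testing argument, $\|f - P_\vn f\| = \sup_{\|h\|=1,\, h \in \cH} |\langle f - P_\vn f, h\rangle|$, and note $\langle f - P_\vn f, h \rangle = \langle f - P_\vn f, h - P_\vn h\rangle$, so it suffices to control $|\langle f, h - P_\vn h\rangle|$ for the tail.

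More concretely, the efficient approach is this. Expand $f$ in the dual frame and split off the tail: let $T_\vn f := \sum_{\vl \not\le \vn} \langle f, \psi_\vl\rangle \tpsi_\vl$, where the sum runs over $\vl \in \bN^2$ with $l_1 > n_1$ or $l_2 > n_2$. Then $f - (f - T_\vn f) = T_\vn f$ with $f - T_\vn f \in \cH_\vn'$, the span of $\{\tpsi_\vl : \vone \le \vl \le \vn\}$; applying the Bessel bound (upper frame bound $B$ for the dual frame, which exists since $\{\tpsi_\vl\}$ is itself a frame) gives $\|T_\vn f\|^2 \le B \sum_{\vl \not\le \vn} |\langle f, \tpsi_\vl\rangle|^2$ — no, that is the wrong direction. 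The correct statement: writing $T_\vn f = \sum_{\vl \not\le \vn} c_\vl \tpsi_\vl$ with $c_\vl = \langle f, \psi_\vl\rangle$, the synthesis operator of a frame is bounded, so $\|T_\vn f\| \le \sqrt{B'}\, \bigl(\sum_{\vl \not\le \vn} |c_\vl|^2\bigr)^{1/2}$ where $B'$ is the upper frame bound of $\{\tpsi_\vl\}$. Now $\sum_{\vl \not\le \vn} |\langle f, \psi_\vl\rangle|^2 \le c_0^2 \sum_{\vl\not\le\vn} \|\vl\|_2^{-2s}$ by the smoothness assumption \eqref{eq:smoothassumption}. Since $\|f - P_\vn f\| \le \|f - (f-T_\vn f)\| = \|T_\vn f\|$, it remains a purely elementary estimate on the tail sum $\sum_{l_1 > n_1 \text{ or } l_2 > n_2} \|\vl\|_2^{-2s}$.

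For that tail, I would bound $\sum_{\vl: l_1 > n_1 \text{ or } l_2 > n_2} \|\vl\|_2^{-2s} \le \sum_{l_1 > n_1} \sum_{l_2 \ge 1} \|\vl\|_2^{-2s} + \sum_{l_2 > n_2} \sum_{l_1 \ge 1} \|\vl\|_2^{-2s}$ and treat one term by symmetry. Using $\|\vl\|_2^{-2s} \le (l_1^2 + l_2^2)^{-s}$ and comparing the sum over $l_1 > n_1$ to an integral: $\sum_{l_1 > n_1}\sum_{l_2\ge 1}(l_1^2+l_2^2)^{-s} \lesssim \int_{n_1}^\infty\!\!\int_0^\infty (x^2+y^2)^{-s}\,dy\,dx$, then pass to polar-type coordinates or substitute $y = xt$ to get $\int_{n_1}^\infty x^{1-2s}\,dx \cdot \int_0^\infty (1+t^2)^{-s}\,dt \lesssim n_1^{2-2s}$, provided $s > 1$ (guaranteed since $s > 2$) for convergence of the $t$-integral and $2-2s < 0$. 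Taking square roots yields the $n_1^{-(s-1)}$-type bound; but the stated exponent is $-\tfrac{s-1}{2}$, which is weaker, so any such estimate suffices and there is slack. The main obstacle — really the only nontrivial point — is justifying that the tail $T_\vn f$ of the dual-frame expansion can be compared to $P_\vn f$, i.e. that $f - T_\vn f$ lies in a subspace on which $P_\vn$ acts as the identity or at least that $\|f - P_\vn f\|$ is dominated by $\|T_\vn f\|$; if $\spn\{\tpsi_\vl : \vone \le \vl \le \vn\}$ does not coincide with $\cH_\vn$, one instead invokes Lemma \ref{lem:dualcoefficients} to control $\langle f, \tpsi_\vl\rangle$ and runs the same tail argument after testing against unit vectors $h = h - P_\vn h$, which reduces everything to the identical $\sum \|\vl\|_2^{-2s}$ tail bound. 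Everything else is routine real-analysis bookkeeping with the geometric/integral comparison.
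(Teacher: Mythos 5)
There is a genuine gap in the main line of your argument, and it is exactly the one you flag twice but never close: your comparison element is built from the wrong dual-frame expansion. You set $T_\vn f = \sum_{\vl\not\le\vn}\langle f,\psi_\vl\rangle\tpsi_\vl$, so that $f - T_\vn f = \sum_{\vone\le\vl\le\vn}\langle f,\psi_\vl\rangle\tpsi_\vl$ lies in $\spn\{\tpsi_\vl:\vone\le\vl\le\vn\}$, which is in general \emph{not} contained in $\cH_\vn=\spn\{\psi_\vj:\vone\le\vj\le\vn\}$ (each $\tpsi_\vl=S^{-1}\psi_\vl$ sees the whole frame). Consequently the step $\|f-P_\vn f\|\le\|T_\vn f\|$ is unjustified, and everything downstream of it rests on that inequality. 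The fix — which is what the paper does — is to use the \emph{other} expansion $f=\sum_{\vj}\langle f,\tpsi_\vj\rangle\psi_\vj$ and compare against $T_\vn f=\sum_{\vone\le\vj\le\vn}\langle f,\tpsi_\vj\rangle\psi_\vj$, which lies in $\cH_\vn$ by construction; the tail coefficients $\langle f,\tpsi_\vj\rangle$ are then controlled by Lemma \ref{lem:dualcoefficients}, which is precisely why that lemma is proved first. Your closing sentence (``invoke Lemma \ref{lem:dualcoefficients} \dots after testing against $h-P_\vn h$'') gestures at an equivalent resolution, but it is left as a half-sentence sketch while the body of the proposal develops the flawed comparison, so as written the proof does not go through.

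Once the expansion is swapped, your tail estimate is not only salvageable but arguably cleaner and sharper than the paper's. The paper expands $\|\sum_{\vj\not\le\vn}\langle f,\tpsi_\vj\rangle\psi_\vj\|^2$ as a double sum against the Grammian $\langle\psi_\vj,\psi_\vl\rangle$, separates variables via $1+\|\vj-\vl\|_2\ge\sqrt{1+|j_1-l_1|}\sqrt{1+|j_2-l_2|}$, and quotes a one-dimensional tail bound from \cite{Song2011b}, arriving at $cn_1^{-(s-1)}$ for the \emph{squared} norm, hence $n_1^{-(s-1)/2}$. Your route bounds the tail directly by the synthesis operator of $\{\psi_\vj\}$ (norm $\le\sqrt{B}$) applied to the $\ell^2$ tail of the coefficients, and your integral comparison $\sum_{\vj\not\le\vn}\|\vj\|_2^{-2s}\lesssim n_1^{2-2s}+n_2^{2-2s}$ then yields $\|f-T_\vn f\|\lesssim n_1^{-(s-1)}+n_2^{-(s-1)}$, which is strictly stronger than the stated bound. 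So the analytic content of your proposal is fine; the defect is structural, in the choice of which partial sum is guaranteed to live in $\cH_\vn$.
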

\begin{proof}
For $\vn\in \bN^2$, let $T_\vn f =\sum_{1\leq \vj\leq \vn} \langle f, \tpsi_\vj \rangle \psi_\vj$. Since $T_\vn f\in \cH_\vn$ and  $P_\vn$ is the orthogonal projection onto $\cH_\vn$, we have
$
\|f- P_\vn f\| \leq \|f- T_\vn f\|.
$ It is enough to show $\|f- T_\vn f\|$ satisfies the desired inequality. 

To estimate $\|f- T_\vn f\|$, we use the dual frame property $f=\sum_{\vj\in \bN^2} \langle f, \tpsi_\vj \rangle \psi_\vj$ which implies
\begin{equation}\label{eq:projerrordecomp}
\|f- T_\vn f \| \leq \biggl\| \sum_{j_1>n_1, j_2\in \bN} \langle f, \tpsi_\vj \rangle \psi_\vj \biggr\| + \biggl\| \sum_{j_1\in \bN, j_2>n_2} \langle f, \tpsi_\vj \rangle \psi_\vj \biggr\|.
\end{equation}
Considering the first  term on the right hand, we have
\begin{equation*}
\biggl\| \sum_{j_1>n_1, j_2\in \bN} \langle f, \tpsi_\vj \rangle \psi_\vj \biggr\|^2 = \sum_{j_1>n_1, j_2\in \bN} \sum_{l_1>n_1, l_2\in \bN} \langle f, \tpsi_\vj \rangle \langle f, \tpsi_\vl \rangle \langle \psi_\vj, \psi_\vl \rangle.
\end{equation*}
Substituting the localization condition \eqref{eq:localizedcondition} and the smooth assumption \eqref{eq:smoothassumption} into the above equality, yields for any $\vn\in \bN^2$,
\begin{equation*}
\biggl\| \sum_{j_1>n_1, j_2\in \bN} \langle f, \tpsi_\vj \rangle \psi_\vj \biggr\|^2 \leq c_1 \sum_{j_1>n_1, j_2\in \bN} \sum_{l_1>n_1, l_2\in \bN} \|\vj\|_2^{-s} \|\vl\|_2^{-s} (1+ \|\vj-\vl\|_2)^{-s},
\end{equation*}
where $c_1$ is a positive constant. Note that for any $\va,\vb\in \bN^2$, we have $\|\va\|_2 \geq \sqrt{2 a_1 a_2}$ and $1 + ||\va -\vb||_2 \ge \sqrt{1 + |a_1-b_1|}\sqrt{1+|a_2-b_2|}$. Thus
\begin{equation*}
\biggl\| \sum_{j_1>n_1, j_2\in \bN} \langle f, \tpsi_\vj \rangle \psi_\vj \biggr\|^2 \leq c_1 2^{-s} \sum_{j_1>n_1, l_1 >n_1} j_1^{-\frac{s}{2}} l_1^{-\frac{s}{2}} (1+ |j_1 - l_1|)^{-\frac{s}{2}} \sum_{j_2\in \bN, l_2\in \bN} j_2^{-\frac{s}{2}} l_2^{-\frac{s}{2}} (1+ |j_2 - l_2|)^{-\frac{s}{2}}.
\end{equation*}
Since $\sum_{j_2\in \bN, l_2\in \bN} j_2^{-\frac{s}{2}} l_2^{-\frac{s}{2}} (1+ |j_2 - l_2|)^{-\frac{s}{2}}$ is bounded whenever $s>2$, which is required for the localization property in Definition \ref{def:2dlocalized}, it follows that there exists a positive constant $c_2$ such that
\begin{equation*}
\biggl\| \sum_{j_1>n_1, j_2\in \bN} \langle f, \tpsi_\vj \rangle \psi_\vj \biggr\|^2 \leq c_2 \sum_{j_1>n_1, l_1 >n_1} j_1^{-\frac{s}{2}} l_1^{-\frac{s}{2}} (1+ |j_1 - l_1|)^{-\frac{s}{2}}.
\end{equation*}
It was shown in \cite{Song2011b} that the summation term on the right hand side of the above inequality is bounded by $\frac{2}{(s/2-1)(s-1)} n_1^{-(s-1)}$, yielding
\begin{equation*}
\biggl\| \sum_{j_1>n_1, j_2\in \bN} \langle f, \tpsi_\vj \rangle \psi_\vj \biggr\|^2 \leq c_3 n_1^{-(s-1)},
\end{equation*}
where $c_3=\frac{2 c_2}{(s/2-1)(s-1)}$. For the second term in \eqref{eq:projerrordecomp} we could similarly show that there exists a positive constant $c_4$ such that
\begin{equation*}
\biggl\| \sum_{j_1\in \bN, j_2>n_2} \langle f, \tpsi_\vj \rangle \psi_\vj \biggr\|^2 \leq c_4 n_2^{-(s-1)}.
\end{equation*}
The desired result then follows from substituting the above two inequalities into \eqref{eq:projerrordecomp}.
\end{proof}

We are now ready to present an estimate of the first term in the error decomposition \eqref{error-decomposition-inv}.

\begin{prop}\label{prop:projectionerror}
If $\{\psi_\vj\}$ is a localized frame as in \eqref{eq:localizedcondition} and the smoothness assumption \eqref{eq:smoothassumption} holds, then there exists a positive constant $c$ such that for any $\vn\in \bN^2$
\begin{equation*}
\|S^{-1}f- P_\vn S^{-1}f\| \leq c n_1^{-\frac{s-1}{2}} + c n_2^{-\frac{s-1}{2}}.
\end{equation*}
\end{prop}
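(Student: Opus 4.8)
The plan is to recognize this proposition as an immediate consequence of the two preceding lemmas, applied not to $f$ but to $g := S^{-1}f$; the only thing to check is that $g$ inherits the smoothness assumption \eqref{eq:smoothassumption}.

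First I would use that the frame operator $S$ is bounded, self-adjoint, and positive definite, so that $S^{-1}$ is also self-adjoint. Hence, for every $\vj \in \bN^2$,
\begin{equation*}
\langle S^{-1}f, \psi_\vj \rangle = \langle f, S^{-1}\psi_\vj \rangle = \langle f, \tpsi_\vj \rangle,
\end{equation*}
since $\tpsi_\vj = S^{-1}\psi_\vj$ by the definition of the canonical dual frame in \eqref{eq:f_frame}.

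Next, because $\{\psi_\vj\}$ satisfies the localization condition \eqref{eq:localizedcondition} and $f$ satisfies the smoothness assumption \eqref{eq:smoothassumption}, Lemma \ref{lem:dualcoefficients} provides a positive constant $c_1$ with $\bigl|\langle f, \tpsi_\vj \rangle\bigr| \leq c_1 \|\vj\|_2^{-s}$ for all $\vj \in \bN^2$. Combining this with the identity above shows $\bigl|\langle S^{-1}f, \psi_\vj \rangle\bigr| \leq c_1 \|\vj\|_2^{-s}$, i.e., the function $g = S^{-1}f \in \cH$ also satisfies the smoothness assumption \eqref{eq:smoothassumption}, with $c_1$ in place of $c_0$. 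Applying Lemma \ref{lem:projectionerror} to $g$ then yields a positive constant $c$ (depending only on $\gamma$, $s$, the frame bounds, and $c_1$, hence ultimately on $c_0$) such that $\|g - P_\vn g\| \leq c\, n_1^{-\frac{s-1}{2}} + c\, n_2^{-\frac{s-1}{2}}$ for all $\vn \in \bN^2$, which is precisely the asserted bound for $\|S^{-1}f - P_\vn S^{-1}f\|$.

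I do not expect a genuine obstacle here: the substantive work has already been done inside Lemma \ref{lem:dualcoefficients}, whose proof invokes Wiener's lemma to transfer the polynomial decay from the Grammian of $\{\psi_\vj\}$ to that of $\{\tpsi_\vj\}$. The one point requiring care is the bookkeeping of constants — one must verify that the constant produced by Lemma \ref{lem:projectionerror} depends on $g$ only through the constant appearing in its smoothness bound, so that the final $c$ is genuinely uniform in $\vn$; tracing the estimates through both lemmas confirms this.
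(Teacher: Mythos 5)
Your proposal is correct and follows exactly the paper's own argument: use self-adjointness of $S$ to identify $\langle S^{-1}f,\psi_\vj\rangle$ with $\langle f,\tpsi_\vj\rangle$, invoke Lemma \ref{lem:dualcoefficients} to transfer the smoothness assumption to $S^{-1}f$, and then apply Lemma \ref{lem:projectionerror}. Your additional remark about tracking how the constant depends on the smoothness bound is a sensible bit of extra care, but nothing beyond the paper's proof is needed.
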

\begin{proof}
Since the frame operator $S$ is self-adjoint, we have $\langle S^{-1}f, \psi_\vj \rangle = \langle f, S^{-1}\psi_\vj \rangle = \langle f, \tpsi_\vj \rangle$. By Lemma \ref{lem:dualcoefficients}, we know that $S^{-1}f$ also satisfies the smoothness assumption \eqref{eq:smoothassumption}. The desired result therefore follows from applying Lemma \ref{lem:projectionerror} on $S^{-1}f$.
\end{proof}

We next estimate $\|V_{\vn,\vm}^{-1}P_{\vn}S_{\vm}(S^{-1}f -P_{\vn} S^{-1}f) \|$, the second term in the error decomposition \eqref{error-decomposition-inv}. It it evident that $\|P_\vn\| \leq 1$ and $\|S_\vm\| \leq \|S\| \leq B$, the upper frame bound for $\{\psi_\vj\}$, for $\vm, \vn\in \bN^2$. It remains to estimate $\|V_{\vn,\vm}^{-1}\|$. Specifically, we will show an explicit choice of $\vm$ depending on $\vn$ such that $\|V_{\vn,\vm}^{-1}\|$ is uniformly bounded for all $\vn\in \bN^2$. To this end, we define the following constant
\begin{equation*}
A_{\vm, \vn} = \frac{2s \gamma^2}{ (s-1)^2 \lambda_{\min}(\mPsi_\vn)} n_1 n_2 \bigl[ (m_1 -n_1)^{-(s-1)} +  (m_2 -n_2)^{-(s-1)} \bigr],
\end{equation*}
where $\mPsi_\vn =\bigl[ \langle \psi_\vj, \psi_\vl \bigr]_{\vone\leq \vj,\vl\leq \vn}$ and $\lambda_{\min}(\mPsi_\vn)$ is its smallest eigenvalue. We assume here that the matrix $\mPsi_\vn$ is invertible.  Otherwise, we  can use its invertible principle submatrix instead. We always choose $m_1, m_2$ larger than $n_1, n_2$ respectively.

Lemma \ref{lem:mchoice} provides an upper bound of $\|V_{\vn,\vm}^{-1}\|$.
\begin{lem}\label{lem:mchoice}
Suppose $\{\psi_\vj\}$ is a localized frame as in \eqref{eq:localizedcondition}. If $A_{\vm, \vn} <A$, the lower frame bound constant for $\{\psi_\vj\}$, then $\{P_\vn \psi_\vj: \vone\leq \vj\leq \vm\}$ is a frame for $\cH_\vn$ with frame bound constants $A-A_{\vm,\vn}$, $B$, and the corresponding frame operator is $V_{\vn, \vm}$ defined in \eqref{eq:V_nm}. Moreover, we have
\begin{equation*}
\|V_{\vn,\vm}^{-1}\| \leq \frac{1}{A-A_{\vm, \vn}}.
\end{equation*}
\end{lem}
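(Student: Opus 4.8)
The plan is to show that $\{P_\vn\psi_\vj : \vone\le\vj\le\vm\}$ is a frame for $\cH_\vn$ by estimating the ``missing mass'' that $P_\vn$ throws away, and then to recognize $V_{\vn,\vm}$ as the associated frame operator so the operator-norm bound on $V_{\vn,\vm}^{-1}$ follows from the standard fact $\|S^{-1}\|\le 1/A_{\text{lower}}$. Fix $g\in\cH_\vn$. Since $\{\psi_\vj:\vj\in\bN^2\}$ is a frame for $\cH$ with bounds $A,B$, we have $A\|g\|^2\le\sum_{\vj\in\bN^2}|\langle g,\psi_\vj\rangle|^2\le B\|g\|^2$. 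Now split the sum over $\vj$ into the block $\vone\le\vj\le\vm$ and its complement, and within the first block write $\langle g,\psi_\vj\rangle=\langle g,P_\vn\psi_\vj\rangle$ (because $g\in\cH_\vn$), so that
\begin{equation*}
\sum_{\vone\le\vj\le\vm}\bigl|\langle g,P_\vn\psi_\vj\rangle\bigr|^2
= \sum_{\vj\in\bN^2}\bigl|\langle g,\psi_\vj\rangle\bigr|^2 - \sum_{\vj\not\le\vm}\bigl|\langle g,\psi_\vj\rangle\bigr|^2.
\end{equation*}
The upper bound $B$ is immediate. For the lower bound it suffices to show the tail term $\sum_{\vj\not\le\vm}|\langle g,\psi_\vj\rangle|^2$ is at most $A_{\vm,\vn}\|g\|^2$; then the left side is at least $(A-A_{\vm,\vn})\|g\|^2$, which is positive by the hypothesis $A_{\vm,\vn}<A$.

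The core estimate is the tail bound, and this is where the localization hypothesis enters. Expand $g=\sum_{\vone\le\vl\le\vn}c_\vl\psi_\vl\in\cH_\vn$; then $\langle g,\psi_\vj\rangle=\sum_{\vone\le\vl\le\vn}c_\vl\langle\psi_\vl,\psi_\vj\rangle$, and using \eqref{eq:localizedcondition} together with the factorization $\|\va\|_2\ge\sqrt{2a_1a_2}$ and $1+\|\va-\vb\|_2\ge\sqrt{1+|a_1-b_1|}\sqrt{1+|a_2-b_2|}$ (exactly as in the proof of Lemma \ref{lem:projectionerror}), one reduces $\sum_{\vj\not\le\vm}|\langle g,\psi_\vj\rangle|^2$ to a product of one-dimensional sums of the form $\sum_{j_1>m_1}\sum_{l_1\le n_1}(1+|j_1-l_1|)^{-s}$, which since $m_1>n_1$ is controlled by $\sum_{k\ge m_1-n_1}k^{-s}\le\frac{1}{s-1}(m_1-n_1)^{-(s-1)}$ (and similarly in the $j_2$ direction), with a factor $n_1 n_2$ coming from the number of terms in $\vl$ and the Cauchy--Schwarz passage $\bigl(\sum_\vl|c_\vl|\bigr)^2\le n_1n_2\sum_\vl|c_\vl|^2$, and finally $\sum_\vl|c_\vl|^2\le\lambda_{\min}(\mPsi_\vn)^{-1}\|g\|^2$ since $\|g\|^2=c^\ast\mPsi_\vn c\ge\lambda_{\min}(\mPsi_\vn)\|c\|^2$. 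Collecting the constants reproduces precisely $A_{\vm,\vn}$ as defined above (the $\gamma^2$ from squaring \eqref{eq:localizedcondition}, the $\frac{2s}{(s-1)^2}$ from the one-dimensional tail sums in both coordinate directions, the $\lambda_{\min}(\mPsi_\vn)^{-1}n_1n_2$, and the bracketed $(m_1-n_1)^{-(s-1)}+(m_2-n_2)^{-(s-1)}$).

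Once the frame property is established, I identify the frame operator of $\{P_\vn\psi_\vj:\vone\le\vj\le\vm\}$ on $\cH_\vn$: for $g\in\cH_\vn$ it is $\sum_{\vone\le\vj\le\vm}\langle g,P_\vn\psi_\vj\rangle P_\vn\psi_\vj = P_\vn\sum_{\vone\le\vj\le\vm}\langle g,\psi_\vj\rangle\psi_\vj = P_\vn S_\vm g = V_{\vn,\vm}g$, using again $\langle g,P_\vn\psi_\vj\rangle=\langle g,\psi_\vj\rangle$ for $g\in\cH_\vn$ and comparing with \eqref{finiteframe} and \eqref{eq:V_nm}. Since a frame operator on a finite-dimensional space with lower bound $A-A_{\vm,\vn}$ is a positive invertible operator with $\|V_{\vn,\vm}^{-1}\|\le (A-A_{\vm,\vn})^{-1}$, the bound follows. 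The main obstacle is the tail estimate: carefully performing the coordinate-splitting so that the two-dimensional sum cleanly separates into one-dimensional pieces and keeping exact track of the constants so that the bound matches the stated $A_{\vm,\vn}$; the rest is bookkeeping with standard frame facts.
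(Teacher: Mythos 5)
Your proposal is correct and follows essentially the same route as the paper: split $\sum_{\vj\in\bN^2}|\langle g,\psi_\vj\rangle|^2$ into the block $\vone\le\vj\le\vm$ and its tail, bound the tail via Cauchy--Schwarz, the localization condition \eqref{eq:localizedcondition}, and the coordinate factorization $1+\|\vj-\vl\|_2\ge(1+|j_1-l_1|)^{1/2}(1+|j_2-l_2|)^{1/2}$ to recover $A_{\vm,\vn}$, then identify $V_{\vn,\vm}$ as the frame operator of $\{P_\vn\psi_\vj\}$. One small correction: the factor $n_1n_2$ arises from summing the squared localization bound over $\vl\in[1,n_1]\times[1,n_2]$ after the standard Cauchy--Schwarz step $|\sum_\vl a_\vl\langle\psi_\vl,\psi_\vj\rangle|^2\le\|\va\|_2^2\sum_\vl|\langle\psi_\vl,\psi_\vj\rangle|^2$, not from the cruder passage $(\sum_\vl|c_\vl|)^2\le n_1n_2\sum_\vl|c_\vl|^2$.
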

\begin{proof}
We proceed by establishing the frame condition by direct computation. Suppose $g\in \cH_\vn$. We need to find a lower and upper bound for  $\sum_{\vone\leq \vj\leq \vm} |\langle g, P_\vn \psi_\vj \rangle|^2$. Note that the projection operator $P_\vn$ is self-adjoint, implying $\langle g, P_\vn \psi_\vj \rangle = \langle P_\vn g,  \psi_\vj \rangle$. Moreover, since $g\in \cH_\vn$, we have $P_\vn g =g$. It follows that
\begin{equation*}
\sum_{\vone\leq \vj\leq \vm} |\langle g, P_\vn \psi_\vj \rangle|^2 = \sum_{\vone\leq \vj\leq \vm} |\langle g, \psi_\vj \rangle|^2
\end{equation*}
An upper bound follows from the simple observation that for any $\vm\in \bN^2$,
\begin{equation*}
\sum_{\vone\leq \vj\leq \vm} |\langle g, \psi_\vj \rangle|^2 \leq  \sum_{j_1=1}^{\infty}\sum_{j_2=1}^{\infty} |\langle g, \psi_\vj \rangle|^2 \leq B\|g\|^2.
\end{equation*}
We next determine a lower bound for $\sum_{\vone\leq \vj\leq \vm} |\langle g, \psi_\vj \rangle|^2$. To this end, we first write
\begin{equation}
\label{eq:finitesum}
\sum_{\vone\leq \vj\leq \vm} |\langle g, \psi_\vj \rangle|^2 = \sum_{\vj \in \bN^2}|\langle g, \psi_\vj \rangle|^2 - \widetilde{\sum} |\langle g, \psi_\vj \rangle|^2,
\end{equation}
where we have adopted the notation
$\widetilde{\sum} = \sum_{\{\vj\in \bN^2\backslash [1, m_1]\times [1, m_2]\}}.$
We proceed by determining an upper bound for $\widetilde{\sum}|\langle g, \psi_\vj \rangle|^2$.  Observe that
\begin{equation}\label{eq:tail}
\widetilde{\sum}|\langle g, \psi_\vj \rangle|^2 \leq \sum_{j_1>m_1}\sum_{j_2\in \bN} |\langle g, \psi_\vj \rangle|^2 + \sum_{j_1\in \bN}\sum_{j_2>m_2} |\langle g, \psi_\vj \rangle|^2.
\end{equation}
Examining the first term on the right hand side of (\ref{eq:tail}), we note that since $g\in \cH_\vn$, it can be written as  $g=\sum_{\vone\leq \vl\leq \vn} a_\vl \psi_\vl$ for some $\va=(a_\vl: \vone\leq \vl\leq \vn)$ viewed as a $n_1 n_2-$vector. It follows that
$\oneshot{\sum}_{j_1 > m_1} \oneshot{\sum}_{j_2\in \bN} |\langle g, \psi_\vj \rangle|^2 = \oneshot{\sum}_{j_1>m_1}\oneshot{\sum}_{j_2\in \bN} \biggl| \oneshot{\sum}_{\vone\leq \vl\leq \vn}a_\vl \langle \psi_\vl, \psi_\vj \rangle \biggr|^{2}.$
By Cauchy-Schwartz inequality, we have
\begin{equation*}
\sum_{j_1>m_1}\sum_{j_2\in \bN} |\langle g, \psi_\vj \rangle|^2 \leq \|\va\|_2^2 \sum_{j_1>m_1}\sum_{j_2\in \bN} \sum_{\vone\leq \vl\leq \vn} \bigl|\langle \psi_\vl, \psi_\vj \rangle \bigr|^2.
\end{equation*}
Applying the localization condition \eqref{eq:localizedcondition} to the above inequality yields
\begin{equation*}
\sum_{j_1>m_1}\sum_{j_2\in \bN} |\langle g, \psi_\vj \rangle|^2 \leq \gamma^2 \|\va\|_2^2 \sum_{j_1>m_1}\sum_{j_2\in \bN} \sum_{\vone\leq \vl\leq \vn} (1+\|\vj-\vl\|_2)^{-2s}.
\end{equation*}
As in Lemma \ref{lem:projectionerror}, we use the fact that $1+\|\vj-\vl\|_2 \geq (1+|j_1-l_1|)^{1/2} (1+|j_2-l_2|)^{1/2}$ to obtain
\begin{equation}\label{eq:taildecomp}
\sum_{j_1>m_1}\sum_{j_2\in \bN} |\langle g, \psi_\vj \rangle|^2 \leq \gamma^2 \|\va\|_2^2 \sum_{j_1>m_1}\sum_{l_1=1}^{n_1} (1+|j_1-l_1|)^{-s} \sum_{j_2\in \bN}\sum_{l_2=1}^{n_2} (1+|j_2-l_2|)^{-s}.
\end{equation}
When $s>2$, for any $l_2\in \bN$, we have $\sum_{j_2\in \bN} (1+|j_2-l_2|)^{-s} \leq \frac{2s}{s-1}$. That is,
\begin{equation*}
\sum_{j_2\in \bN}\sum_{l_2=1}^{n_2} (1+|j_2-l_2|)^{-s} \leq \frac{2s n_2}{s-1}.
\end{equation*}
Moreover, it follows from the proof of Lemma 3.4 in \cite{Song2011b} that
\begin{equation*}
\sum_{j_1>m_1}\sum_{l_1=1}^{n_1} (1+|j_1-l_1|)^{-s} \leq \frac{n_1}{s-1} (m_1-n_1)^{-(s-1)}.
\end{equation*}
Substituting the above two inequalities into \eqref{eq:taildecomp} yields
\begin{equation*}
\sum_{j_1>m_1}\sum_{j_2\in \bN} |\langle g, \psi_\vj \rangle|^2 \leq \|\va\|_2^2 \frac{2s \gamma^2}{ (s-1)^2} n_1 n_2 (m_1 -n_1)^{-(s-1)}.
\end{equation*}
For the second term on the right hand side of (\ref{eq:tail}) we similarly have
\begin{equation*}
\sum_{j_1\in \bN}\sum_{j_2>m_2} |\langle g, \psi_\vj \rangle|^2 \leq \|\va\|_2^2 \frac{2s \gamma^2}{ (s-1)^2} n_1 n_2 (m_2 -n_2)^{-(s-1)}.
\end{equation*}
Combining the above two inequalities provides the upper bound
\begin{equation*}
\widetilde{\sum}|\langle g, \psi_\vj \rangle|^2 \leq \|\va\|_2^2 \frac{2s \gamma^2}{ (s-1)^2} n_1 n_2 \bigl[ (m_1 -n_1)^{-(s-1)} +  (m_2 -n_2)^{-(s-1)} \bigr].
\end{equation*}
For the first term on the right hand side of (\ref{eq:finitesum}), we have $\|g\|^2 = \|\sum_{\vone\leq \vl\leq \vn} a_\vl \psi_\vl\|^2 = \va^T \mPsi_\vn \va \geq \lambda_{\min}(\mPsi_\vn) \|\va\|_2^2$, that is
$\|\va\|^2_2 \leq \frac{\|g\|^2}{\lambda_{\min}(\mPsi_\vn)}$. Thus
\begin{equation*}
\widetilde{\sum}|\langle g, \psi_\vj \rangle|^2 \leq A_{\vm, \vn} \|g\|^2.
\end{equation*}
Finally, since $\sum_{\vj\in \bN^2}|\langle g, \psi_\vj \rangle|^2 \geq A \|g\|^2$ by the frame condition, we obtain a lower bound using (\ref{eq:finitesum}) for any $g\in \cH_\vn$ as
\begin{equation*}
\sum_{\vone\leq \vj\leq \vm} |\langle g, \psi_\vj \rangle|^2 \geq (A - A_{\vm,\vn}) \|g\|^2,
\end{equation*}
or in general, $A-A_{\vm, \vn}$ is a lower frame bound constant for the frame $\{P_\vn \psi_\vj: \vone\leq \vj\leq \vm\}$ if $A_{\vm, \vn} <A$.
Moreover, we observe that for $g\in \cH_\vn$,
\begin{equation*}
\sum_{\vone\leq \vj\leq \vm} \langle g, P_\vn \psi_\vj\rangle P_\vn \psi_\vj  = \sum_{\vone\leq \vj\leq \vm} \langle g,  \psi_\vj\rangle P_\vn \psi_\vj = P_\vn S_\vm g = V_{\vn, \vm}g.
\end{equation*}
That is, $V_{\vn, \vm}$ is the corresponding frame operator and the bound of $\|V_{\vn,\vm}^{-1}\|$ follows immediately.
\end{proof}

We next present an estimate for $\|V_{\vn,\vm}^{-1}P_{\vn}S_{\vm}(S^{-1}f -P_{\vn} S^{-1}f) \|$ the second term in the error decomposition \eqref{error-decomposition-inv} by choosing $\vm$ such that $\|V_{\vn,\vm}^{-1}\|$ is uniformly bounded for all $\vn$.

\begin{prop}\label{prop:secondterm}
Suppose $\{\psi_\vj\}$ is a localized frame as in \eqref{eq:localizedcondition} and assumption \eqref{eq:smoothassumption} holds. If
\begin{equation}\label{eq:mchoice}
m_1 = n_1 +  \frac{8s \gamma^2 n_1 n_2}{A (s-1)^2 \lambda_{\min}(\mPsi_\vn)} , \quad m_2 = n_2 +  \frac{8s \gamma^2 n_1 n_2}{A (s-1)^2 \lambda_{\min}(\mPsi_\vn)},
\end{equation}
then $\|V_{\vn, \vm}^{-1}\| \leq \frac{2}{A}$ and there exist s a positive constant such that
\begin{equation*}
\|V_{\vn,\vm}^{-1}P_{\vn}S_{\vm}(S^{-1}f -P_{\vn} S^{-1}f) \| \leq c n_1^{-\frac{s-1}{2}} + c n_2^{-\frac{s-1}{2}}.
\end{equation*}
\end{prop}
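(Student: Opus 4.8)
The plan is to dispatch the two assertions in sequence: first establish the uniform bound $\|V_{\vn,\vm}^{-1}\|\le 2/A$ from the explicit choice \eqref{eq:mchoice}, and then read off the error estimate by operator-norm submultiplicativity together with the projection bound already proved in Proposition~\ref{prop:projectionerror}.

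For the first assertion I would appeal to Lemma~\ref{lem:mchoice}: it is enough to verify that \eqref{eq:mchoice} forces $A_{\vm,\vn}\le A/2$, since then $A_{\vm,\vn}<A$ and Lemma~\ref{lem:mchoice} yields $\|V_{\vn,\vm}^{-1}\|\le 1/(A-A_{\vm,\vn})\le 2/A$ for every $\vn\in\bN^2$. Set $K:=m_1-n_1=m_2-n_2$; by \eqref{eq:mchoice},
\begin{equation*}
K=\frac{8s\gamma^2 n_1 n_2}{A(s-1)^2\lambda_{\min}(\mPsi_\vn)},\qquad\text{hence}\qquad \frac{2s\gamma^2 n_1 n_2}{(s-1)^2\lambda_{\min}(\mPsi_\vn)}=\frac{AK}{4}.
\end{equation*}
Substituting into the definition of $A_{\vm,\vn}$ gives
\begin{equation*}
A_{\vm,\vn}=\frac{AK}{4}\bigl[K^{-(s-1)}+K^{-(s-1)}\bigr]=\frac{A}{2}\,K^{2-s}.
\end{equation*}
Because $s>2$ the exponent $2-s$ is negative, so $K^{2-s}\le 1$ as soon as $K\ge 1$, giving $A_{\vm,\vn}\le A/2$. (If $K<1$, or more prosaically if \eqref{eq:mchoice} fails to return integers, replace $m_i$ by $n_i+\max\{1,\lceil K\rceil\}$: enlarging $m_i$ only decreases $A_{\vm,\vn}$ and leaves the rate in the second assertion untouched, and a direct check gives $A_{\vm,\vn}<A/2$ in that regime as well.) Thus $A_{\vm,\vn}\le A/2$ uniformly in $\vn$, and $\|V_{\vn,\vm}^{-1}\|\le 2/A$ follows.

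For the second assertion, since $P_\vn S_\vm$ maps $\cH$ into $\cH_\vn$ where $V_{\vn,\vm}^{-1}$ acts, submultiplicativity of the operator norm gives
\begin{equation*}
\bigl\|V_{\vn,\vm}^{-1}P_\vn S_\vm(S^{-1}f-P_\vn S^{-1}f)\bigr\|\le \|V_{\vn,\vm}^{-1}\|\,\|P_\vn\|\,\|S_\vm\|\,\bigl\|S^{-1}f-P_\vn S^{-1}f\bigr\|.
\end{equation*}
One has $\|P_\vn\|\le 1$ and $\|S_\vm\|\le\|S\|\le B$, the first step just gave $\|V_{\vn,\vm}^{-1}\|\le 2/A$ uniformly in $\vn$, and Proposition~\ref{prop:projectionerror} bounds the last factor by $c(n_1^{-(s-1)/2}+n_2^{-(s-1)/2})$. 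Multiplying and absorbing $2Bc/A$ into a single constant yields the stated estimate.

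The only genuinely nontrivial point is the first step — checking that the explicit $\vm$ of \eqref{eq:mchoice} is large enough to push $A_{\vm,\vn}$ below $A$ for all $\vn$ at once — and that is precisely where the hypothesis $s>2$ enters, through the factor $K^{2-s}$. The (possibly small) quantity $\lambda_{\min}(\mPsi_\vn)$ in the denominator of $K$ causes no difficulty: the argument needs only the one-sided bound $A_{\vm,\vn}\le A/2$, which any $\vm$ at least as large as \eqref{eq:mchoice} satisfies. Everything after that is bookkeeping, the substantive analytic estimates (decay of the dual-frame coefficients and the resulting projection error) having been carried out in Lemmas~\ref{lem:dualcoefficients}--\ref{lem:projectionerror} and Proposition~\ref{prop:projectionerror}.
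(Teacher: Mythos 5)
Your proof is correct and follows the same route as the paper's: Lemma~\ref{lem:mchoice} to get $\|V_{\vn,\vm}^{-1}\|\le 2/A$, then submultiplicativity with $\|P_\vn\|\le 1$, $\|S_\vm\|\le B$, and Proposition~\ref{prop:projectionerror} for the projection factor. In fact your computation $A_{\vm,\vn}=\frac{A}{2}K^{2-s}$ is more careful than the paper's, which asserts $A_{\vm,\vn}=\frac{A}{2}$ exactly --- that would require an exponent $\frac{1}{s-1}$ on the increment in \eqref{eq:mchoice} (compare the analogous choice \eqref{eq:admmchoice}); with \eqref{eq:mchoice} as literally written, your one-sided bound $A_{\vm,\vn}\le A/2$ for $K\ge 1$, together with the integrality remark, is exactly what the argument needs.
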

\begin{proof}
By choosing $\vm$ as in \eqref{eq:mchoice}, $A_{\vm,\vn}=\frac{A}{2}$. Thus Lemma \ref{lem:mchoice} yields $\|V_{\vn,\vm}^{-1}\|\leq \frac{2}{A}$. Since $\|P_\vn\|\leq 1$ and $\|S_\vm\|\leq B$, we have
\begin{equation*}
\|V_{\vn,\vm}^{-1}P_{\vn}S_{\vm}(S^{-1}f -P_{\vn} S^{-1}f) \| \leq \frac{2B}{A}\|S^{-1}f -P_{\vn} S^{-1}f\|  .
\end{equation*}
The desired result follows immediately by applying Proposition \ref{prop:projectionerror} on the last term of the above inequality.
\end{proof}

We next estimate $\|V_{\vn,\vm}^{-1}P_{\vn} (S- S_{\vm})S^{-1}f \|$, the final term in \eqref{error-decomposition-inv}.

\begin{prop}\label{prop:thirdterm}
Suppose $\{\psi_\vj\}$ is a localized frame as in \eqref{eq:localizedcondition} and assumption \eqref{eq:smoothassumption} holds. If we choose $\vm$ as in \eqref{eq:mchoice}, then there exists a positive constant $c$ such that
\begin{equation*}
\|V_{\vn,\vm}^{-1}P_{\vn} (S- S_{\vm})S^{-1}f \|  \leq c m_1^{-\frac{s-1}{2}} + c m_2^{-\frac{s-1}{2}}.
\end{equation*}
\end{prop}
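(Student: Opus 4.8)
The plan is to reduce the estimate to the bounds already in hand together with a repeat of the tail estimate from Lemma~\ref{lem:projectionerror}. First, since $\vm$ is chosen as in \eqref{eq:mchoice}, Proposition~\ref{prop:secondterm} (via Lemma~\ref{lem:mchoice}) gives $\|V_{\vn,\vm}^{-1}\|\leq 2/A$, while $\|P_\vn\|\leq 1$ trivially, so that
\begin{equation*}
\|V_{\vn,\vm}^{-1}P_{\vn}(S-S_{\vm})S^{-1}f\| \leq \frac{2}{A}\,\|(S-S_{\vm})S^{-1}f\|,
\end{equation*}
and the whole problem becomes estimating $\|(S-S_{\vm})S^{-1}f\|$.

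The key step is to recognize the identity $(S-S_{\vm})S^{-1}f = f - T_{\vm}f$, where $T_{\vm}f = \sum_{\vone\leq\vj\leq\vm}\langle f,\tpsi_\vj\rangle\psi_\vj$ is exactly the operator appearing in Lemma~\ref{lem:projectionerror}. Indeed, $S$ and hence $S^{-1}$ are self-adjoint, so $\langle S^{-1}f,\psi_\vj\rangle = \langle f,\tpsi_\vj\rangle$; therefore $S_{\vm}S^{-1}f = \sum_{\vone\leq\vj\leq\vm}\langle f,\tpsi_\vj\rangle\psi_\vj = T_{\vm}f$ while $SS^{-1}f = f$. Then I would invoke the argument inside the proof of Lemma~\ref{lem:projectionerror} with $\vn$ replaced by $\vm$: use the dual-frame expansion $f = \sum_{\vj\in\bN^2}\langle f,\tpsi_\vj\rangle\psi_\vj$, split the tail index set $\bN^2\setminus([1,m_1]\times[1,m_2])$ into $\{j_1>m_1\}$ and $\{j_2>m_2\}$, expand each squared norm, and apply the localization bound \eqref{eq:localizedcondition}, the dual-coefficient decay $|\langle f,\tpsi_\vj\rangle|\leq c\|\vj\|_2^{-s}$ from Lemma~\ref{lem:dualcoefficients}, the factorization $1+\|\vj-\vl\|_2\geq(1+|j_1-l_1|)^{1/2}(1+|j_2-l_2|)^{1/2}$, and the one-dimensional summation estimate borrowed from \cite{Song2011b}. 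This yields $\|f - T_{\vm}f\|\leq c\,m_1^{-(s-1)/2} + c\,m_2^{-(s-1)/2}$, and combining with the first display finishes the proof.

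I do not expect a genuine obstacle here. The one point needing care is that Lemma~\ref{lem:projectionerror} is stated in terms of the orthogonal projection $P_\vn$, whereas here I need the bound for $T_{\vm}$ itself; but the proof of that lemma in fact bounds $\|f-T_\vn f\|$ directly, so it transfers verbatim. As an alternative presentation that sidesteps the identity $(S-S_{\vm})S^{-1}f = f-T_{\vm}f$, one can instead note from Lemma~\ref{lem:dualcoefficients} that the coefficients $\langle S^{-1}f,\psi_\vj\rangle = \langle f,\tpsi_\vj\rangle$ decay like $\|\vj\|_2^{-s}$, and estimate the tail $\sum_{\vj\notin[1,m_1]\times[1,m_2]}\langle S^{-1}f,\psi_\vj\rangle\psi_\vj$ by repeating the computation of Lemma~\ref{lem:projectionerror} line for line.
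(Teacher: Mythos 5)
Your proposal is correct and follows essentially the same route as the paper: reduce to $\|(S-S_{\vm})S^{-1}f\|$ via the uniform bound $\|V_{\vn,\vm}^{-1}\|\leq 2/A$, identify the coefficients $\langle S^{-1}f,\psi_\vj\rangle=\langle f,\tpsi_\vj\rangle$, and rerun the tail estimate from the proof of Lemma~\ref{lem:projectionerror} with $\vm$ in place of $\vn$. Your explicit identity $(S-S_{\vm})S^{-1}f=f-T_{\vm}f$ is just a cleaner packaging of the same computation, and your observation that the lemma's proof bounds $\|f-T_\vn f\|$ directly (not merely $\|f-P_\vn f\|$) is exactly the point the paper relies on.
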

\begin{proof}
By Proposition \ref{prop:secondterm}, we have $\|V_{\vn,\vm}^{-1}\|$ is uniformly bounded by $2/A$. It suffices to show
\begin{equation*}
\| (S- S_{\vm})S^{-1}f \|  \leq c m_1^{-\frac{s-1}{2}} + c m_2^{-\frac{s-1}{2}}.
\end{equation*}
Observe that
\begin{equation}\label{eq:thirdtermdecomp}
\| (S- S_{\vm})S^{-1}f \|  \leq \biggl\|\sum_{j_1>m_1}\sum_{j_2\in \bN} \langle S^{-1}f, \psi_\vj\rangle \psi_\vj \biggr\|  + \biggl\|\sum_{j_1\in \bN}\sum_{j_2>m_2} \langle S^{-1}f, \psi_\vj\rangle \psi_\vj \biggr\|.
\end{equation}
We will estimate the first term on the right hand side of the above inequality. The second term would follows in a similar way. Note that $ \langle S^{-1}f, \psi_\vj\rangle =  \langle f,  S^{-1}\psi_\vj\rangle =  \langle f,  \tpsi_\vj\rangle$. It follows that
\begin{equation*}
\biggl\|\sum_{j_1>m_1}\sum_{j_2\in \bN} \langle S^{-1}f, \psi_\vj\rangle \psi_\vj \biggr\|^2 = \sum_{j_1>m_1}\sum_{j_2\in \bN}  \sum_{l_1>m_1}\sum_{l_2\in \bN} \langle f,  \tpsi_\vj\rangle \langle f,  \tpsi_\vl\rangle \langle \psi_\vj, \psi_\vl \rangle.
\end{equation*}
By the localization condition \eqref{eq:localizedcondition} and Lemma \ref{lem:dualcoefficients}, there exists a positive constant $c_1$ such that
\begin{equation*}
\biggl\|\sum_{j_1>m_1}\sum_{j_2\in \bN} \langle S^{-1}f, \psi_\vj\rangle \psi_\vj \biggr\|^2  \leq c_1 \sum_{j_1>m_1}\sum_{j_2\in \bN}  \sum_{l_1>m_1}\sum_{l_2\in \bN} \|\vj\|^{-s} \|\vl\|^{-s} (1+\|\vj - \vl\|_2)^{-s},
\end{equation*}
when $s > 2$.  It follows from the proof of Lemma \ref{lem:projectionerror} that the right hand side of the above inequality is bounded by $c_2 m_1^{-(s-1)}$ for some positive constant $c_2$. That is,
\begin{equation*}
\biggl\|\sum_{j_1>m_1}\sum_{j_2\in \bN} \langle S^{-1}f, \psi_\vj\rangle \psi_\vj \biggr\|^2  \leq c_2 m_1^{-(s-1)}.
\end{equation*}
Similarly, we could obtain
\begin{equation*}
 \biggl\|\sum_{j_1\in \bN}\sum_{j_2>m_2} \langle S^{-1}f, \psi_\vj\rangle \psi_\vj \biggr\|  \leq c_3 m_1^{-(s-1)},
\end{equation*}
for some positive constant $c_3$.
The desired result follows from substituting the above two inequalities into \eqref{eq:thirdtermdecomp}.
\end{proof}

We now summarize all the estimated for the three terms in \eqref{error-decomposition-inv} to obtain an estimate for $\| S^{-1}f - V_{\vn,\vm}^{-1}P_{\vn} f\|$.

\begin{thm}
Suppose $\{\psi_\vj\}$ is a localized frame as in \eqref{eq:localizedcondition} and assumption \eqref{eq:smoothassumption} holds. If we choose $\vm$ as in \eqref{eq:mchoice}, then there exists a positive constant $c$ such that
\begin{equation*}
\| S^{-1}f - V_{\vn,\vm}^{-1}P_{\vn} f\| \leq c n_1^{-\frac{s-1}{2}} + c n_2^{-\frac{s-1}{2}}.
\end{equation*}
\end{thm}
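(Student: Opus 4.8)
The plan is simply to assemble the three estimates already established. Starting point: the error decomposition \eqref{error-decomposition-inv}, which bounds $\| S^{-1}f - V_{\vn,\vm}^{-1}P_{\vn} f\|$ by the sum of the projection error $\|S^{-1}f - P_\vn S^{-1}f\|$, the term $\|V_{\vn,\vm}^{-1}P_{\vn}S_{\vm}(S^{-1}f -P_{\vn} S^{-1}f)\|$, and the term $\|V_{\vn,\vm}^{-1}P_{\vn}(S-S_{\vm})S^{-1}f\|$. Since the hypotheses of the theorem (localization \eqref{eq:localizedcondition}, smoothness \eqref{eq:smoothassumption}, and the choice \eqref{eq:mchoice} of $\vm$) are exactly the standing hypotheses of Propositions \ref{prop:projectionerror}, \ref{prop:secondterm} and \ref{prop:thirdterm}, I would invoke each in turn.

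First, Proposition \ref{prop:projectionerror} gives $\|S^{-1}f - P_\vn S^{-1}f\| \leq c\, n_1^{-\frac{s-1}{2}} + c\, n_2^{-\frac{s-1}{2}}$. Next, Proposition \ref{prop:secondterm} (whose proof records that the choice \eqref{eq:mchoice} forces $A_{\vm,\vn}=A/2$, hence $\|V_{\vn,\vm}^{-1}\|\le 2/A$ uniformly in $\vn$) gives the same bound $c\, n_1^{-\frac{s-1}{2}} + c\, n_2^{-\frac{s-1}{2}}$ for the second term. Finally, Proposition \ref{prop:thirdterm} bounds the third term by $c\, m_1^{-\frac{s-1}{2}} + c\, m_2^{-\frac{s-1}{2}}$.

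The one point that needs a word of justification is converting this last bound, which is phrased in terms of $\vm$, into a bound in terms of $\vn$: from \eqref{eq:mchoice} we have $m_1 = n_1 + \frac{8s\gamma^2 n_1 n_2}{A(s-1)^2\lambda_{\min}(\mPsi_\vn)} \geq n_1$ and likewise $m_2 \geq n_2$, so that $m_1^{-\frac{s-1}{2}} \leq n_1^{-\frac{s-1}{2}}$ and $m_2^{-\frac{s-1}{2}} \leq n_2^{-\frac{s-1}{2}}$. Adding the three bounds and taking $c$ to be the largest of the constants that appear then yields
\begin{equation*}
\| S^{-1}f - V_{\vn,\vm}^{-1}P_{\vn} f\| \leq c\, n_1^{-\frac{s-1}{2}} + c\, n_2^{-\frac{s-1}{2}},
\end{equation*}
which is the claim.

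There is no real obstacle here: the theorem is a bookkeeping corollary of the three propositions. The only thing to be careful about is that every constant be genuinely independent of $\vn$ — this is guaranteed because $\|V_{\vn,\vm}^{-1}\| \le 2/A$ uniformly (Proposition \ref{prop:secondterm}) and $\|P_\vn\|\le 1$, $\|S_\vm\|\le B$ for all $\vn,\vm$ — and that the $\vm$-to-$\vn$ comparison above goes in the favorable direction, which it does since $m_i \ge n_i$ under \eqref{eq:mchoice}.
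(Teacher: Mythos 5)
Your proposal is correct and follows exactly the paper's route: the paper's proof is the one-line statement that the theorem follows by combining Propositions \ref{prop:projectionerror}, \ref{prop:secondterm}, and \ref{prop:thirdterm} in the decomposition \eqref{error-decomposition-inv}. Your additional observation that the $\vm$-dependent bound from Proposition \ref{prop:thirdterm} converts to an $\vn$-dependent one because $m_i \ge n_i$ under \eqref{eq:mchoice} (so $m_i^{-\frac{s-1}{2}} \le n_i^{-\frac{s-1}{2}}$) is a detail the paper leaves implicit, and it is exactly the right justification.
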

\begin{proof}
It follows immediately by combining the estimates in Propositions \ref{prop:projectionerror}, \ref{prop:secondterm}, and \ref{prop:thirdterm}.
\end{proof}


\section{Approximating $S^{-1}$ for weakly localized frames in two dimensions}\label{sec:admissibleframes}
We now discuss the case when the sampling frame, $\{\psi_\vj: \vj\in \bN^2\}$, is only {\it weakly localized}, that is, the condition \eqref{eq:localizedcondition} holds for some $0<s\leq 2$. We point out that Wiener's Lemma employed in the proof of Lemma \ref{lem:dualcoefficients} will not hold and the dual frame coefficients may not have a similar decay as the frame coefficients. Thus the convergence analysis techniques of the Casazza-Christensen method in Section \ref{sec:localizedframes} cannot be used. In particular, the Casazza-Christensen method converges very slowly when the sampling frame is only weakly localized. In \cite{Song2011b} the technique of {\em admissible frames} was introduced to project a weakly localized frame onto a more localized frame which enabled a better approximation to the inverse frame operator for one-dimensional problems.  We demonstrate below how this method can be extended to include two-dimensional frames. To this end, we first define admissibility for two-dimensional frames.

 \begin{defn}\label{def:2dadmissible}
 A frame $\{\phi_{\vj}: \vj \in \bN^2\}$ is {\em admissible} with respect to a frame $\{\psi_{\vj}: \vj \in \bN^2\}$ if
 \begin{enumerate}[(1)]\label{eq:admlocalized}
  \item It is intrinsically self-localized
  \begin{equation}
   \left|\langle \phi_{\vj}, \phi_{\vl} \rangle\right| \leq \gamma_0 (1+ \|\vj-\vl\|_2)^{-t}, \quad \gamma_0>0, \, \vj, \vl\in \bN^2
  \end{equation}
with a localization rate $t>2$, that is, $\{\phi_\vj: \vj \in \bN^2\}$ satisfies Definition \ref{def:2dlocalized}, and
  \item The inner product of the two frames is bounded by
  \begin{equation}\label{eq:admissibleframe}
   \left|\langle \psi_{\vj}, \phi_{\vl} \rangle\right| \leq \gamma_1 (1+ |j_1-l_1|)^{-s}(1+ |j_2-l_2|)^{-s}, \quad  \gamma_1>0,\, s>0,\, \vj, \vl\in \bN^2.
  \end{equation}
 \end{enumerate}
\end{defn}

As in \cite{Song2011b}, the idea here is to use the projection onto a finite-dimensional subspace spanned by the admissible frame $\phi_\vj$ to approximate the dual frame of the weakly localized frame $\psi_\vj$. For any $\vn=(n_1, n_2)\in \bN^2$, we let $\cG_{\vn}:=\spn\{\phi_{\vj}: \vone\leq \vj\leq \vn\}$ be the finite-dimensional subspace of $\cH$ and $Q_\vn$ be the orthogonal projection from $\cH$ onto $\cG_\vn$. Observe that $\{\phi_{\vj}: \vone\leq \vj\leq \vn\}$ is a frame for $\cG_\vn$ (c.f. \cite{Christensen2003}). For $\vm, \vn\in \bN^2$, let $W_{\vn,\vm}$ be the restriction of $Q_\vn S_\vm$ to $\cG_\vn$:
\begin{equation}
\label{eq:W_nm}
W_{\vn,\vm}:= Q_\vn S_\vm |_{\cG_\vn},
\end{equation}
where $S_\vm$ is the finite frame operator associated with the original frame $\{\psi_{\vj}: \vj \in \bN^2\}$ given in (\ref{finiteframe}). As stated previously, there always exists an $\vm = \vm(\vn)$ such that an approximation to $S^{-1}$ exists and converges, \cite{Casazza2000}.  Below we demonstrate that the operator $W_{\vn,\vm(\vn)}$ is invertible, and that moreover we can approximate $S^{-1}$ for weakly localized frames using (\ref{eq:W_nm}) with
\begin{equation}
\|S^{-1}f - W_{\vn, \vm(\vn)}^{-1}Q_\vn f \| \rightarrow 0, \quad \mbox{ as } \vn\rightarrow \infty.
\label{eq:weak_inv}
 \end{equation}
We further find an explicit relationship for $\vm = \vm(\vn)$. For ease of presentation we subsequently write $W_{\vn}$ for the operator $W_{\vn,\vm(\vn)}$ since both subscripts depend on $\vn$.

In order to approximate the convergence rate of (\ref{eq:weak_inv}), we consider the following symbolic decomposition
\begin{equation*}
S^{-1}f - W_{\vn}^{-1}Q_{\vn} f = S^{-1}(f - Q_{\vn}f) + S^{-1}(W_{\vn} - S)W_{\vn}^{-1}Q_{\vn}f
\end{equation*}

We use the lower frame bound $A$ to give a bound of $S^{-1}$ and simplify the approximation error as follows
\begin{equation}\label{adm-error-decomp}
 \|S^{-1}f - W_{\vn}^{-1}Q_{\vn} f\| \leq \frac{1}{A}\|f - Q_{\vn}f\| + \frac{1}{A}\|(W_{\vn} - S)W_{\vn}^{-1}Q_{\vn}f\|
\end{equation}

As in Section \ref{sec:localizedframes}, we assume that $f$ satisfies the smoothness assumption (\ref{eq:smoothassumption}) for the admissible frame $\{\phi_{\vj}: \vj \in \bN^2\}$.  This allows us to study the error introduced by the projections onto $\cG_{\vn}.$ Lemma \ref{lem:projectionerror} immediately provides an estimate for the first term on the right hand side of \eqref{adm-error-decomp} given by:
\begin{prop}\label{prop:admprojerror}
If $\{\phi_\vj\}$ is a localized frame as in \eqref{eq:admlocalized} satisfying \eqref{eq:smoothassumption}, then there exists a positive constant $c$ such that for any $\vn\in \bN^2$
\begin{equation*}
\|f- Q_\vn f\|\leq c n_1^{-\frac{t-1}{2}} + c n_2^{-\frac{t-1}{2}}.
\end{equation*}
\end{prop}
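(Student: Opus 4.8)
The plan is to obtain Proposition \ref{prop:admprojerror} as a direct corollary of Lemma \ref{lem:projectionerror}, applied to the admissible frame $\{\phi_\vj\}$ rather than to the sampling frame $\{\psi_\vj\}$. The key observation is that the hypotheses of Lemma \ref{lem:projectionerror} transfer verbatim: by part (1) of Definition \ref{def:2dadmissible} the frame $\{\phi_\vj\}$ is itself intrinsically self-localized with localization rate $t>2$, i.e.\ it satisfies Definition \ref{def:2dlocalized} with the pair $(\gamma_0,t)$ in place of $(\gamma,s)$; and by assumption $f$ satisfies the smoothness condition \eqref{eq:smoothassumption} with respect to $\{\phi_\vj\}$, with decay rate $t$. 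Consequently every ingredient used in the proof of Lemma \ref{lem:projectionerror} remains valid after the substitution $\psi_\vj\to\phi_\vj$, $s\to t$, $P_\vn\to Q_\vn$, $\cH_\vn\to\cG_\vn$, and $\tpsi_\vj\to\tphi_\vj$ (the canonical dual frame of $\{\phi_\vj\}$).

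Concretely, I would set $R_\vn f:=\sum_{\vone\le\vj\le\vn}\langle f,\tphi_\vj\rangle\phi_\vj\in\cG_\vn$, note that $\|f-Q_\vn f\|\le\|f-R_\vn f\|$ since $Q_\vn$ is the orthogonal projection onto $\cG_\vn$, and then estimate $\|f-R_\vn f\|$ exactly as in Lemma \ref{lem:projectionerror}: expand $f=\sum_{\vj}\langle f,\tphi_\vj\rangle\phi_\vj$ via the dual frame property, split the tail as $\|f-R_\vn f\|\le\|\sum_{j_1>n_1,\,j_2\in\bN}\langle f,\tphi_\vj\rangle\phi_\vj\|+\|\sum_{j_1\in\bN,\,j_2>n_2}\langle f,\tphi_\vj\rangle\phi_\vj\|$, expand each norm squared into a double sum of inner products $\langle\phi_\vj,\phi_\vl\rangle$, and apply the localization bound from part (1) of Definition \ref{def:2dadmissible} together with the dual-coefficient decay $|\langle f,\tphi_\vj\rangle|\le c\|\vj\|_2^{-t}$. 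The latter is Lemma \ref{lem:dualcoefficients} applied to $\{\phi_\vj\}$; its hypotheses hold because $\{\phi_\vj\}$ is localized with rate $t>2$, so Wiener's Lemma for matrices with polynomial off-diagonal decay still applies to the Grammian $[\langle\phi_\vj,\phi_\vl\rangle]$. Factoring the two-dimensional sums into products of one-dimensional sums via $\|\va\|_2\ge\sqrt{2a_1a_2}$ and $1+\|\va-\vb\|_2\ge\sqrt{1+|a_1-b_1|}\sqrt{1+|a_2-b_2|}$, and invoking the one-dimensional tail estimate from \cite{Song2011b}, then gives $\|f-R_\vn f\|^2\le c\,n_1^{-(t-1)}+c\,n_2^{-(t-1)}$, which is the claim.

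I do not expect any genuine obstacle here, since the proposition is essentially a relabelling of Lemma \ref{lem:projectionerror}. The only point that deserves a remark is that the appeal to Lemma \ref{lem:dualcoefficients} for $\{\phi_\vj\}$ is legitimate, i.e.\ that the canonical dual of the admissible frame inherits the polynomial decay rate $t$; but this is precisely why part (1) of the admissibility Definition \ref{def:2dadmissible} imposes $t>2$, so that the Wiener's Lemma argument of Lemma \ref{lem:dualcoefficients} carries over unchanged. Hence the proof reduces to a one-line invocation of Lemma \ref{lem:projectionerror} with $\{\phi_\vj\}$ and $t$ in place of $\{\psi_\vj\}$ and $s$.
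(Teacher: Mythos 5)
Your proposal is correct and matches the paper exactly: the paper gives no separate proof of Proposition \ref{prop:admprojerror}, stating only that it follows immediately from Lemma \ref{lem:projectionerror} applied to the admissible frame $\{\phi_\vj\}$ with localization rate $t$ in place of $s$, which is precisely the substitution you carry out. Your additional remark that Lemma \ref{lem:dualcoefficients} applies to $\{\phi_\vj\}$ because $t>2$ keeps Wiener's Lemma available is a correct and worthwhile clarification of why the transfer is legitimate.
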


To analyze the second term, we will show that $\|W_\vn f - Sf\|$ converges to zero for $f \in \cG_{\vn}$ and that $W_{\vn}^{-1}$ exists and is uniformly bounded for all ${\vn}$. We first define the following constant
\begin{equation*}
B_{\vm, \vn} = \frac{4s \gamma_1^2}{ (2s-1)^2 \lambda_{\min}(\mPhi_\vn)} n_1 n_2 \bigl[ (m_1 -n_1)^{-(2s-1)} +  (m_2 -n_2)^{-(2s-1)} \bigr],
\end{equation*}
where $\mPhi_\vn =\bigl[ \langle \phi_\vj, \phi_\vl \rangle \bigr]_{\vone\leq \vj,\vl\leq \vn}$ and $\lambda_{\min}(\mPhi_\vn)$ is its smallest eigenvalue. We assume the matrix $\mPhi_\vn$ is invertible. Otherwise, we can use its invertible principle submatrix instead.  As in Section \ref{sec:localizedframes}, we always choose $m_1, m_2$ larger than $n_1, n_2$ respectively.

\begin{lem}\label{lem:Wnconverge}
For $m_1\geq n_1$ and $m_2\geq n_2$, it holds that $\|S-W_{\vn}\| \leq B_{\vm,\vn}.$
\end{lem}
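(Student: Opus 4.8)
The plan is to reduce the claim to a single tail estimate, entirely parallel to the tail estimate carried out inside the proof of Lemma~\ref{lem:mchoice}, but now driven by the cross-frame bound \eqref{eq:admissibleframe} rather than the self-localization \eqref{eq:localizedcondition}. Fix $g\in\cG_\vn$ and write $g=\sum_{\vone\le\vl\le\vn}a_\vl\phi_\vl$; since $\|g\|^2=\va^T\mPhi_\vn\va$ we have $\|\va\|_2^2\le\|g\|^2/\lambda_{\min}(\mPhi_\vn)$ (replacing $\mPhi_\vn$ by an invertible principal submatrix if necessary). Because $Q_\vn$ is self-adjoint and $Q_\vn g=g$, one has $\langle W_\vn g,g\rangle=\langle Q_\vn S_\vm g,g\rangle=\langle S_\vm g,g\rangle=\sum_{\vone\le\vj\le\vm}|\langle g,\psi_\vj\rangle|^2$, so that
\[
\langle (S-W_\vn)g,g\rangle=\sum_{\vj\in\bN^2}|\langle g,\psi_\vj\rangle|^2-\sum_{\vone\le\vj\le\vm}|\langle g,\psi_\vj\rangle|^2=\widetilde{\sum}\,|\langle g,\psi_\vj\rangle|^2,
\]
where $\widetilde{\sum}$ denotes the sum over $\vj\in\bN^2\setminus([1,m_1]\times[1,m_2])$ as in \eqref{eq:finitesum}. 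Everything then reduces to showing $\widetilde{\sum}|\langle g,\psi_\vj\rangle|^2\le B_{\vm,\vn}\|g\|^2$; this quadratic-form bound is exactly what the next step wants, since together with the lower frame bound it yields $\langle W_\vn g,g\rangle\ge(A-B_{\vm,\vn})\|g\|^2$ and hence a uniform bound on $\|W_\vn^{-1}\|$ once $B_{\vm,\vn}<A$, in the same way $A_{\vm,\vn}$ was used in Lemma~\ref{lem:mchoice}.

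For the tail estimate I would write $\langle g,\psi_\vj\rangle=\sum_{\vone\le\vl\le\vn}a_\vl\langle\phi_\vl,\psi_\vj\rangle$, apply Cauchy--Schwarz, and insert \eqref{eq:admissibleframe} to get $|\langle g,\psi_\vj\rangle|^2\le\gamma_1^2\|\va\|_2^2\sum_{\vone\le\vl\le\vn}(1+|j_1-l_1|)^{-2s}(1+|j_2-l_2|)^{-2s}$. Splitting $\widetilde{\sum}$ into the two overlapping half-strips $\{j_1>m_1\}$ and $\{j_2>m_2\}$ as in \eqref{eq:tail}, the double sum over $\vj$ and $\vl$ factors coordinatewise: over the first coordinate into the tail sum $\sum_{j_1>m_1}\sum_{l_1=1}^{n_1}(1+|j_1-l_1|)^{-2s}\le\frac{n_1}{2s-1}(m_1-n_1)^{-(2s-1)}$ (the estimate from the proof of Lemma~3.4 of \cite{Song2011b}, with $s$ replaced by $2s$), and over the second coordinate into $\sum_{l_2=1}^{n_2}\sum_{j_2\in\bN}(1+|j_2-l_2|)^{-2s}\le\frac{4s\,n_2}{2s-1}$; the $\{j_2>m_2\}$ half-strip contributes the symmetric $(m_2-n_2)$ term. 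Collecting the constant $\frac{4s\gamma_1^2}{(2s-1)^2}$, the factor $n_1n_2$, and $\|\va\|_2^2\le\|g\|^2/\lambda_{\min}(\mPhi_\vn)$ produces precisely $\widetilde{\sum}|\langle g,\psi_\vj\rangle|^2\le B_{\vm,\vn}\|g\|^2$.

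The step needing the most care is this coordinatewise splitting, and it is where Definition~\ref{def:2dadmissible} earns its keep: the tensor-product decay $(1+|j_1-l_1|)^{-s}(1+|j_2-l_2|)^{-s}$ of \eqref{eq:admissibleframe} is exactly what lets one separate the tail sum in $j_1$ from the convergent sum in $j_2$, and it needs only $2s>1$, i.e. $s>\tfrac12$, for convergence, whereas the Euclidean-norm decay $(1+\|\vj-\vl\|_2)^{-s}$ of \eqref{eq:localizedcondition} would have forced $s>2$ --- precisely the regime this section is designed to escape. A bookkeeping point worth flagging is that the full vector $(S-W_\vn)g$ also carries a component $(I-Q_\vn)Sg$ orthogonal to $\cG_\vn$ and independent of $\vm$; this is a projection-type error coming from the gap between $\spn\{\psi_\vj\}$ and $\cG_\vn$, handled separately via the localization of $\{\phi_\vj\}$, so it matters that the subsequent convergence analysis invokes Lemma~\ref{lem:Wnconverge} only through the quadratic-form / invertibility consequence obtained here.
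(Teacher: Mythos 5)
Your proposal is correct and follows essentially the same route as the paper: reduce $\langle(S-W_\vn)g,g\rangle$ to the tail sum $\widetilde{\sum}|\langle g,\psi_\vj\rangle|^2$, expand $g$ in the admissible frame, apply Cauchy--Schwarz and the cross-Gramian decay \eqref{eq:admissibleframe}, factor the tail coordinatewise, and divide by $\langle g,g\rangle\geq\lambda_{\min}(\mPhi_\vn)\|\va\|_2^2$. Your closing remark about the component $(I-Q_\vn)Sg$ lying outside $\cG_\vn$ is a fair observation about how the operator norm is being interpreted, but it does not alter the argument, which matches the paper's.
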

\begin{proof}
Since $S-W_{\vn}$ and $Q_{\vn}$ are self adjoint,
\begin{equation}\label{eq:Wnerror}
 \|S - W_{\vn}\|_{\cG_{\vn}} = \sup_{g\in\cG_{\vn}} \frac{ \bigl| \langle(S-W_{\vn})g,g\rangle \bigr|}{\langle g, g\rangle} = \sup_{g\in\cG_{\vn}} \frac{\bigl| \langle Sg,g\rangle - \langle S_{\vm}g,g\rangle \bigr|}{\langle g, g\rangle}.
\end{equation}
We proceed by finding an upper bound for $\langle Sg,g\rangle - \langle S_{\vm}g,g\rangle$. Expanding the frame operator yields
$
\langle Sg,g\rangle = \bigl\langle \sum_{\vj\in \bN^{2}} \langle g, \psi_{\vj}\rangle \psi_{\vj},g  \bigr\rangle = \sum_{\vj\in \bN^{2}}|\langle g, \psi_{\vj}\rangle|^{2}.
$
We can find a similar expression using the partial frame operator $S_{\vm}$. Subtracting the two expressions results in the upper bound,
\begin{equation*}
\langle Sg,g\rangle - \langle S_{\vm}g,g\rangle = \sum_{\vj \in \bN^{2}}|\langle g, \psi_{\vj}\rangle|^{2} - \sum_{\vone \leq \vj \leq \vm}|\langle g, \psi_{\vj}\rangle|^{2} = \sum_{j_1 > m_1 \mbox{ or } j_2>m_2}|\langle g, \psi_\vj \rangle|^2.
\end{equation*}
The remaining analysis follows closely to that of the proof of Lemma \ref{lem:mchoice}. Observe that
\begin{equation}\label{eq:tail2}
\sum_{j_1 > m_1 \mbox{ or } j_2>m_2}|\langle g, \psi_\vj \rangle|^2 \leq \sum_{j_1>m_1}\sum_{j_2\in \bN} |\langle g, \psi_\vj \rangle|^2 + \sum_{j_1\in \bN}\sum_{j_2>m_2} |\langle g, \psi_\vj \rangle|^2.
\end{equation}
We will obtain an upper bound for the first term on the right hand side of the above inequality. An upper bound for the second term could be obtained similarly. Since $g\in \cG_\vn$, we can write $g=\sum_{\vone\leq\vl\leq \vn} a_\vl \phi_\vl$ for some $\va=(a_\vl: \vone\leq\vl\leq \vn)$. It follows that
\begin{equation*}
\sum_{j_1>m_1}\sum_{j_2\in \bN} |\langle g, \psi_\vj \rangle|^2 = \sum_{j_1>m_1}\sum_{j_2\in \bN} \biggl|\sum_{\vone\leq\vl\leq \vn}a_\vl \langle \phi_\vl, \psi_\vj \rangle \biggr|^2.
\end{equation*}
By Cauchy-Schwartz inequality, we have
\begin{equation*}
\sum_{j_1>m_1}\sum_{j_2\in \bN} |\langle g, \psi_\vj \rangle|^2 \leq \|\va\|_2^2 \sum_{j_1>m_1}\sum_{j_2\in \bN} \sum_{\vone\leq\vl\leq \vn} \bigl|\langle \phi_\vl, \psi_\vj \rangle \bigr|^2.
\end{equation*}
Applying the admissible frame condition \eqref{eq:admissibleframe} to the above inequality yields
\begin{equation}\label{eq:taildecomp2}
\sum_{j_1>m_1}\sum_{j_2\in \bN} |\langle g, \psi_\vj \rangle|^2 \leq \gamma_1^2 \|\va\|_2^2 \sum_{j_1>m_1}\sum_{l_1=1}^{n_1}  (1+|j_1-l_1|)^{-2s} \sum_{j_2\in \bN}\sum_{l_2=1}^{n_2}  (1+|j_2-l_2|)^{-2s}.
\end{equation}
When $s>\frac{1}{2}$, for any $l_2\in \bN$, we have $\sum_{j_2\in \bN} (1+|j_2-l_2|)^{-2s} \leq 2\sum_{t\in \bN} t^{-2s} \leq \frac{4s}{2s-1}$. That is,
\begin{equation*}
\sum_{j_2\in \bN}\sum_{l_2=1}^{n_2} (1+|j_2-l_2|)^{-2s} \leq \frac{4 s n_2}{2s-1}.
\end{equation*}
Moreover, it follows from the proof of Lemma 3.4 in \cite{Song2011b} that
\begin{equation*}
\sum_{j_1>m_1}\sum_{l_1=1}^{n_1} (1+|j_1-l_1|)^{-2s} \leq \frac{n_1}{2s-1} (m_1-n_1)^{-(2s-1)}.
\end{equation*}
Substituting the above two inequalities into \eqref{eq:taildecomp2}, we have
\begin{equation*}
\sum_{j_1>m_1}\sum_{j_2\in \bN} |\langle g, \psi_\vj \rangle|^2 \leq \|\va\|_2^2 \frac{4s \gamma_1^2}{ (2s-1)^2} n_1 n_2 (m_1 -n_1)^{-(2s-1)}.
\end{equation*}
Similarly, we could obtain
\begin{equation*}
\sum_{j_1\in \bN}\sum_{j_2>m_2} |\langle g, \psi_\vj \rangle|^2 \leq \|\va\|_2^2 \frac{4s \gamma_1^2}{ (2s-1)^2} n_1 n_2 (m_2 -n_2)^{-(2s-1)}.
\end{equation*}
Substituting the above two inequalities into \eqref{eq:tail2} yields
\begin{equation*}
\sum_{j_1 > m_1 \mbox{ or } j_2>m_2}|\langle g, \psi_\vj \rangle|^2 \leq \|\va\|_2^2 \frac{4s \gamma_1^2}{ (2s-1)^2} n_1 n_2 \bigl[ (m_1 -n_1)^{-(2s-1)} +  (m_2 -n_2)^{-(2s-1)} \bigr].
\end{equation*}
We next find a lower bound for the denominator in \eqref{eq:Wnerror}:
\begin{equation*}
\langle g, g \rangle = \biggl\|\sum_{\vone\leq\vl\leq \vn} a_\vl \phi_\vl \biggr\|^2 = \va^T \mPhi_\vn \va \geq \lambda_{\min}(\mPhi_\vn) \|\va\|_2^2.
\end{equation*}
Combining the above two inequalities with \eqref{eq:Wnerror}, we have
\begin{equation*}
 \|S - W_{\vn}\|_{\cG_{\vn}} \leq  \frac{4s \gamma_1^2}{(2s-1)^2\lambda_{\min}(\mPhi_\vn)} n_1 n_2 \bigl[ (m_1 -n_1)^{-(2s-1)} +  (m_2 -n_2)^{-(2s-1)} \bigr],
\end{equation*}
which concludes the proof.
\end{proof}

We next present a condition under which $W_{\vn,\vm}$ is invertible.

\begin{lem}\label{lem:admmchoice}
If $B_{\vm, \vn} < A,$ then the set $\{Q_{\vn}\psi_{\vj}: \vone\leq \vj\leq \vm\}$ forms a frame for $\cG_{\vn}$ with frame bounds $A - B_{\vm,\vn}$ and $B$, and the corresponding frame operator is $W_{\vm,\vn}.$ Moreover,
\begin{equation*}
\|W^{-1}_{\vn,\vm}\|\leq \frac{1}{A -B_{\vm,\vn}}.
\end{equation*}
\end{lem}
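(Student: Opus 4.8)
The plan is to establish the frame condition for $\{Q_{\vn}\psi_{\vj}: \vone\leq \vj\leq \vm\}$ on $\cG_{\vn}$ by direct computation, mirroring the argument of Lemma \ref{lem:mchoice} but using $Q_\vn$ in place of $P_\vn$. First I would take an arbitrary $g\in \cG_{\vn}$ and, using that $Q_\vn$ is self-adjoint and fixes elements of $\cG_\vn$, rewrite $\sum_{\vone\leq \vj\leq \vm}|\langle g, Q_\vn\psi_\vj\rangle|^2 = \sum_{\vone\leq \vj\leq \vm}|\langle g, \psi_\vj\rangle|^2$. The upper bound $B\|g\|^2$ then follows immediately since the partial sum over $\vone\leq\vj\leq\vm$ is dominated by the full sum $\sum_{\vj\in\bN^2}|\langle g,\psi_\vj\rangle|^2\leq B\|g\|^2$ by the (original) frame condition for $\{\psi_\vj\}$.

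For the lower bound, I would write $\sum_{\vone\leq\vj\leq\vm}|\langle g,\psi_\vj\rangle|^2 = \sum_{\vj\in\bN^2}|\langle g,\psi_\vj\rangle|^2 - \sum_{j_1>m_1\text{ or }j_2>m_2}|\langle g,\psi_\vj\rangle|^2$, bound the first term below by $A\|g\|^2$ using the frame condition, and bound the tail above. But the tail estimate is exactly what Lemma \ref{lem:Wnconverge} provides: its proof shows $\sum_{j_1>m_1\text{ or }j_2>m_2}|\langle g,\psi_\vj\rangle|^2 \leq B_{\vm,\vn}\|g\|^2$ for $g\in\cG_\vn$ (this is the content of combining the tail bound in terms of $\|\va\|_2^2$ with the lower bound $\langle g,g\rangle\geq\lambda_{\min}(\mPhi_\vn)\|\va\|_2^2$). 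Hence $\sum_{\vone\leq\vj\leq\vm}|\langle g,\psi_\vj\rangle|^2\geq (A-B_{\vm,\vn})\|g\|^2$, which is positive precisely under the hypothesis $B_{\vm,\vn}<A$. So $A-B_{\vm,\vn}$ and $B$ are valid frame bounds for $\{Q_\vn\psi_\vj:\vone\leq\vj\leq\vm\}$ on $\cG_\vn$.

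Next I would identify the frame operator: for $g\in\cG_\vn$,
\begin{equation*}
\sum_{\vone\leq\vj\leq\vm}\langle g, Q_\vn\psi_\vj\rangle Q_\vn\psi_\vj = \sum_{\vone\leq\vj\leq\vm}\langle g,\psi_\vj\rangle Q_\vn\psi_\vj = Q_\vn S_\vm g = W_{\vn,\vm}g,
\end{equation*}
using again the self-adjointness of $Q_\vn$ and $P_\vn g=g$ wait---$Q_\vn g=g$ since $g\in\cG_\vn$. This shows $W_{\vn,\vm}$ is the frame operator of $\{Q_\vn\psi_\vj:\vone\leq\vj\leq\vm\}$, hence it is invertible on $\cG_\vn$ with $\|W_{\vn,\vm}^{-1}\|\leq 1/(A-B_{\vm,\vn})$, the standard bound relating the inverse frame operator norm to the lower frame bound.

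I do not anticipate a serious obstacle here, as the argument is essentially a transcription of Lemma \ref{lem:mchoice} with the admissible frame $\{\phi_\vj\}$ and projection $Q_\vn$ replacing $\{\psi_\vj\}$ and $P_\vn$, with the tail estimate outsourced to Lemma \ref{lem:Wnconverge}. The only point requiring a little care is making sure the tail bound from Lemma \ref{lem:Wnconverge} is applied in its "$\leq B_{\vm,\vn}\|g\|^2$" form rather than the operator-norm form $\|S-W_\vn\|\leq B_{\vm,\vn}$; these are equivalent via the self-adjoint Rayleigh-quotient characterization already used in that proof, so I would simply cite the intermediate inequality.
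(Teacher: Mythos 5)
Your proposal is correct and follows essentially the same route as the paper's own proof: rewrite $\langle g, Q_{\vn}\psi_{\vj}\rangle$ as $\langle g,\psi_{\vj}\rangle$ via self-adjointness of $Q_{\vn}$, get the upper bound $B$ from the original frame condition, get the lower bound $A-B_{\vm,\vn}$ by subtracting the tail estimate established inside the proof of Lemma \ref{lem:Wnconverge}, and identify $W_{\vn,\vm}$ as the frame operator to conclude the inverse bound. Your closing remark about citing the intermediate inequality $\sum_{j_1>m_1\text{ or }j_2>m_2}|\langle g,\psi_{\vj}\rangle|^2\leq B_{\vm,\vn}\|g\|^2$ rather than the operator-norm form is exactly how the paper proceeds.
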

\begin{proof}
We will show $\{Q_{\vn}\psi_{\vj}: \vone\leq \vj\leq \vm\}$ forms a frame for $\cG_{\vn}$ by checking the frame conditions directly. We first observe that for $g\in\cG_{\vn}, \langle g, Q_{\vn} \psi_{\vj}\rangle =  \langle Q_{\vn}g,  \psi_{\vj}\rangle = \langle g,  \psi_{\vj}\rangle,$ where the first equality follows from the fact that $Q_{\vn}$ is self-adjoint. An upper bound then follows directly from the frame condition for $\{\psi_{\vj}\}_{\vj\in \bN^2}$. Indeed, for any $\vm \in \bN^2,$
\begin{equation*}
\sum_{\vone\leq \vj\leq \vm} |\langle g, Q_\vn \psi_\vj \rangle|^2 = \sum_{\vone\leq \vj\leq \vm} |\langle g, \psi_\vj \rangle|^2 \leq  \sum_{\vj\in \bN^2} |\langle g, \psi_\vj \rangle|^2 \leq B\|g\|^2.
\end{equation*}
The lower bound relies on the inequality $$\sum_{j_1 > m_1 \mbox{ or } j_2>m_2}|\langle g, \psi_\vj \rangle|^2 \leq B_{\vm,\vn}\|g\|^2$$ from Lemma \ref{lem:Wnconverge} and on the lower frame bound for $\{\psi_{\vj}\}_{\vj\in \bN^2}.$ Specifically,
\begin{equation*}
\sum_{\vone\leq \vj\leq \vm} |\langle g, Q_\vn \psi_\vj \rangle|^2 = \sum_{\vj\in \bN^2} |\langle g, \psi_\vj \rangle|^2 - \sum_{j_1 > m_1 \mbox{ or } j_2>m_2}|\langle g, \psi_\vj \rangle|^2  \geq (A - B_{\vm,\vn})\|g\|^2,
\end{equation*}
which implies the frame conditions are satisfied for $\{Q_{\vn}\psi_{\vj}: \vone\leq \vj\leq \vm\}$.

It remains to show that the corresponding frame operator is $W_{\vm,\vn}.$ For $g \in \cG_{\vn},$
\begin{equation*}
\sum_{\vone\leq \vj\leq \vm} \langle g, Q_\vn \psi_\vj\rangle Q_\vn \psi_\vj  = \sum_{\vone\leq \vj\leq \vm} \langle g,  \psi_\vj\rangle Q_\vn \psi_\vj = Q_\vn S_\vm g =W_{\vn, \vm}g.
\end{equation*}
Thus  $W_{\vn, \vm}$ is the associated frame operator and is therefore invertible. The bound on the inverse frame operator is the reciprocal of the lower frame bound.
\end{proof}

We now present an estimate of $\|(W_{\vn} - S)W_{\vn}^{-1}Q_{\vn}f\|$, the second term on the right hand side of \eqref{adm-error-decomp}, by giving a specific choice of $\vm$ depending on $\vn$.

\begin{prop}\label{prop:admsecondterm}
Suppose $\{\phi_\vj\}$ is an admissible frame for $\{\psi_\vj\}$   as in \eqref{eq:admissibleframe} and assumption \eqref{eq:smoothassumption} holds. If
\begin{equation}\label{eq:admmchoice}
m_j = n_j +  \left(\frac{8s \gamma^2 n_1 n_2}{A (2s-1)^2 \lambda_{\min}(\mPsi_\vn)}\right)^{\frac{1}{2s-1}}n_j^{\frac{t-1}{2(2s-1)}}, \quad j=1,2,
\end{equation}
then $\|W_{\vn, \vm}^{-1}\| \leq \frac{2}{A}\left(n_1^{-\frac{t-1}{2}} + n_2^{-\frac{t-1}{2}}\right)$ and there exists a positive constant $c$ such that
\begin{equation*}
\|(W_{\vn} - S)W_{\vn}^{-1}Q_{\vn}\| \leq c n_1^{-\frac{t-1}{2}} + c n_2^{-\frac{t-1}{2}}.
\end{equation*}
\end{prop}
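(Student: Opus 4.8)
\emph{Proof plan.} Both assertions follow once we evaluate the constant $B_{\vm,\vn}$ for the prescribed $\vm$ and then invoke Lemmas~\ref{lem:admmchoice} and~\ref{lem:Wnconverge}. First I would substitute \eqref{eq:admmchoice} into the definition of $B_{\vm,\vn}$. Reading the $\gamma$ and $\mPsi_\vn$ in \eqref{eq:admmchoice} as the $\gamma_1$ and $\mPhi_\vn$ occurring in $B_{\vm,\vn}$, we have $m_j-n_j=\bigl(\tfrac{8s\gamma_1^2 n_1 n_2}{A(2s-1)^2\lambda_{\min}(\mPhi_\vn)}\bigr)^{1/(2s-1)} n_j^{(t-1)/(2(2s-1))}$, hence $(m_j-n_j)^{-(2s-1)}=\tfrac{A(2s-1)^2\lambda_{\min}(\mPhi_\vn)}{8s\gamma_1^2 n_1 n_2}\,n_j^{-(t-1)/2}$ for $j=1,2$. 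Substituting this back into $B_{\vm,\vn}$, the factors $\gamma_1^2$, $\lambda_{\min}(\mPhi_\vn)$, $n_1 n_2$ and $(2s-1)^2$ cancel and the numerical factor reduces to $4s/8s=\tfrac12$, so
\[
B_{\vm,\vn}=\frac{A}{2}\bigl(n_1^{-(t-1)/2}+n_2^{-(t-1)/2}\bigr).
\]
(This step tacitly uses $s>\tfrac12$, which is already needed for the tail sums in the proof of Lemma~\ref{lem:Wnconverge}.)

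Since $t>2$, there is an $N$ with $n_1^{-(t-1)/2}+n_2^{-(t-1)/2}\le 1$ whenever $n_1,n_2\ge N$, and for such $\vn$ we get $B_{\vm,\vn}\le A/2<A$. Lemma~\ref{lem:admmchoice} then applies: $W_{\vn,\vm}$ is the frame operator of $\{Q_{\vn}\psi_{\vj}:\vone\le\vj\le\vm\}$ on $\cG_{\vn}$, it is invertible, and $\|W_{\vn,\vm}^{-1}\|\le (A-B_{\vm,\vn})^{-1}\le 2/A$. (The finitely many $\vn$ with $\min(n_1,n_2)<N$ contribute a bounded quantity that can be absorbed into the final constant; the bound $\tfrac2A(n_1^{-(t-1)/2}+n_2^{-(t-1)/2})$ as stated is more naturally the uniform bound $\tfrac2A$, which is all the subsequent estimate uses.)

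For the second inequality, put $h:=W_{\vn}^{-1}Q_{\vn}f\in\cG_{\vn}$. Lemma~\ref{lem:Wnconverge} bounds $S-W_{\vn}$ on $\cG_{\vn}$ by $B_{\vm,\vn}$; combining this with $\|Q_{\vn}\|\le1$, $\|W_{\vn}^{-1}\|\le 2/A$, and the first paragraph gives, for every $f$,
\[
\|(W_{\vn}-S)W_{\vn}^{-1}Q_{\vn}f\|\le B_{\vm,\vn}\,\|W_{\vn}^{-1}\|\,\|Q_{\vn}f\|\le \frac{2}{A}B_{\vm,\vn}\|f\|=\bigl(n_1^{-(t-1)/2}+n_2^{-(t-1)/2}\bigr)\|f\|,
\]
which is the asserted estimate with $c=1$.

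The bookkeeping in the first paragraph is routine once the notation is reconciled, and the uniform invertibility is immediate from Lemma~\ref{lem:admmchoice}; the delicate point is the last display. As proved, Lemma~\ref{lem:Wnconverge} controls the self-adjoint compression $Q_{\vn}(S-W_{\vn})Q_{\vn}$ on $\cG_{\vn}$ (equivalently the quadratic form $g\mapsto\langle(S-S_{\vm})g,g\rangle$), whereas here we need the norm of $(W_{\vn}-S)h$ as an element of $\cH$. Writing $(W_{\vn}-S)h=Q_{\vn}(S_{\vm}-S)h-(I-Q_{\vn})Sh$ (an orthogonal splitting, since $W_{\vn}h\in\cG_{\vn}$), the first summand is $\le B_{\vm,\vn}\|h\|$ directly from the tail estimate in the proof of Lemma~\ref{lem:Wnconverge}, but the second summand $(I-Q_{\vn})Sh$ must be shown to be of the same order; this is where one must genuinely use the admissibility \eqref{eq:admissibleframe} together with the localization of $\{\phi_\vj\}$ (so that $S$ is, on $\cG_{\vn}$, approximately $Q_{\vn}$-invariant to the required order), and it is the main technical obstacle in making the argument fully airtight.
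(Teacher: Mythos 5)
Your argument is, in substance, the paper's own proof: substitute \eqref{eq:admmchoice} into the definition of $B_{\vm,\vn}$ to get $B_{\vm,\vn}=\frac{A}{2}\bigl(n_1^{-(t-1)/2}+n_2^{-(t-1)/2}\bigr)$, invoke Lemma~\ref{lem:admmchoice} for $\|W_{\vn,\vm}^{-1}\|\leq (A-B_{\vm,\vn})^{-1}$ and Lemma~\ref{lem:Wnconverge} for $\|W_{\vn}-S\|\leq B_{\vm,\vn}$, and chain the norms to obtain $\frac{B_{\vm,\vn}}{A-B_{\vm,\vn}}\|f\|$, which is of the claimed order. Your side remarks are also on target: the paper's displayed bound $\frac{2}{A}\bigl(n_1^{-(t-1)/2}+n_2^{-(t-1)/2}\bigr)$ on $\|W_{\vn,\vm}^{-1}\|$ cannot be literally correct (it tends to $0$) and should be read as the uniform bound $\frac{2}{A}$, valid once $B_{\vm,\vn}\leq A/2$; and reading $\gamma,\mPsi_\vn$ in \eqref{eq:admmchoice} as $\gamma_1,\mPhi_\vn$ is clearly what is intended.

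The one point where you go beyond the paper is your closing caveat, and you should know that the paper does not resolve it. In the proof of Lemma~\ref{lem:Wnconverge} the paper simply writes $\|S-W_{\vn}\|_{\cG_{\vn}}=\sup_{g\in\cG_{\vn}}|\langle(S-W_{\vn})g,g\rangle|/\langle g,g\rangle$, which identifies $S-W_{\vn}:\cG_{\vn}\to\cH$ with its self-adjoint compression $Q_{\vn}(S-S_{\vm})|_{\cG_{\vn}}$ and thereby silently discards exactly the component $(I-Q_{\vn})Sh$ you isolate; since the quantity needed in \eqref{adm-error-decomp} is the full $\cH$-norm of $(W_{\vn}-S)h$, that component does matter. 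So you have not missed an ingredient supplied by the paper --- you have flagged a step the paper glosses over, and to make the argument fully rigorous one would indeed have to estimate $(I-Q_{\vn})S|_{\cG_{\vn}}$ separately (using admissibility \eqref{eq:admissibleframe} and the localization of $\{\phi_\vj\}$), as you suggest. Apart from leaving that term open, your proof coincides with the paper's.
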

\begin{proof}
We point out that $B_{\vm,\vn}=\frac{A}{2}\left(n_1^{-\frac{t-1}{2}} + n_2^{-\frac{t-1}{2}}\right)$ with such a choice of $\vm$. By Lemmas \ref{lem:Wnconverge} and \ref{lem:admmchoice}, it follows that
\begin{equation*}
\|(W_{\vn} - S)W_{\vn}^{-1}Q_{\vn}f\| \leq \|W_{\vn} - S \| \|W_{\vn,\vm}^{-1}\| \|Q_{\vn}\| \|f\| \leq  \frac{B_{\vm,\vn}}{A-B_{\vm,\vn}}\|f\| .
\end{equation*}
The desired result follows by plugging in $B_{\vm,\vn}$.
\end{proof}

We now present the main result of this section.

\begin{thm}
Suppose $\{\psi_\vj\}$ is a localized frame as in \eqref{eq:localizedcondition} and assumption \eqref{eq:smoothassumption} holds. If we choose $\vm$ as in \eqref{eq:admmchoice}, then there exists a positive constant $c$ such that
\begin{equation*}
\| S^{-1}f - W_{\vn,\vm}^{-1}Q_{\vn} f\| \leq c n_1^{-\frac{t-1}{2}} + c n_2^{-\frac{t-1}{2}}.
\end{equation*}
\end{thm}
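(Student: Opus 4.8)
The plan is to read the result directly off the symbolic error decomposition \eqref{adm-error-decomp}, in which---after using the lower frame bound to estimate $\|S^{-1}\|\le 1/A$---the total error $\|S^{-1}f - W_{\vn}^{-1}Q_{\vn}f\|$ has already been split into a projection term $\frac1A\|f-Q_{\vn}f\|$ and a perturbation term $\frac1A\|(W_{\vn}-S)W_{\vn}^{-1}Q_{\vn}f\|$. Both pieces have been estimated separately earlier in this section, so the theorem reduces to adding the two bounds and absorbing constants.

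First I would bound the projection term via Proposition \ref{prop:admprojerror}. This uses only the standing hypothesis that $f$ obeys the smoothness assumption \eqref{eq:smoothassumption} with respect to the admissible frame $\{\phi_\vj\}$, together with the intrinsic localization of $\{\phi_\vj\}$ at rate $t>2$ from part (1) of Definition \ref{def:2dadmissible}; it yields $\|f-Q_\vn f\| \le c\,(n_1^{-(t-1)/2} + n_2^{-(t-1)/2})$.

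Next I would bound the perturbation term via Proposition \ref{prop:admsecondterm}, taking $\vm=\vm(\vn)$ exactly as in \eqref{eq:admmchoice}. That proposition assembles the two ingredients we need: with this choice, $B_{\vm,\vn}$---which dominates $\|S-W_{\vn}\|$ by Lemma \ref{lem:Wnconverge}---is driven down to $\frac{A}{2}(n_1^{-(t-1)/2}+n_2^{-(t-1)/2})$, so $B_{\vm,\vn}<A$ for all $\vn$ large enough, $W_{\vn,\vm}$ is invertible with $\|W_{\vn,\vm}^{-1}\|$ uniformly controlled (Lemma \ref{lem:admmchoice}), and consequently $\|(W_{\vn}-S)W_{\vn}^{-1}Q_{\vn}\| \le \frac{B_{\vm,\vn}}{A-B_{\vm,\vn}} \le c\,(n_1^{-(t-1)/2}+n_2^{-(t-1)/2})$. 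Substituting both estimates into \eqref{adm-error-decomp} and renaming the constant finishes the proof.

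I do not expect a real obstacle in this final assembly, since the analytic work---the tail summations bounding $\sum_{j_1>m_1\text{ or }j_2>m_2}|\langle g,\psi_\vj\rangle|^2$, the eigenvalue lower bound on $\mPhi_\vn$, and the frame-perturbation estimate for $\|W_{\vn,\vm}^{-1}\|$---is already carried out in Lemmas \ref{lem:Wnconverge}, \ref{lem:admmchoice} and Propositions \ref{prop:admprojerror}, \ref{prop:admsecondterm}. The one point worth a sentence of care is the balancing choice \eqref{eq:admmchoice}: because $B_{\vm,\vn}$ carries a prefactor proportional to $n_1 n_2/\lambda_{\min}(\mPhi_\vn)$, the gaps $m_j-n_j$ must grow like a fixed fractional power of that prefactor times $n_j^{(t-1)/(2(2s-1))}$, calibrated so the perturbation error matches rather than exceeds the projection rate $n^{-(t-1)/2}$; I would also check that the $\gamma$ and $s$ in \eqref{eq:admmchoice} are read as the admissibility constant $\gamma_1$ and the off-diagonal decay rate $s>1/2$ of \eqref{eq:admissibleframe}, so that $\vm(\vn)$ is finite for every $\vn$.
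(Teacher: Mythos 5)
Your proposal is correct and follows the paper's own proof exactly: the paper likewise combines the estimates of Propositions \ref{prop:admprojerror} and \ref{prop:admsecondterm} in the decomposition \eqref{adm-error-decomp}. Your additional care about reading the constants in \eqref{eq:admmchoice} as $\gamma_1$ and the decay rate from \eqref{eq:admissibleframe} is a reasonable observation but does not change the argument.
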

\begin{proof}
It follows immediately by combining the estimates in Propositions \ref{prop:admprojerror} and \ref{prop:admsecondterm} into the error decomposition \eqref{adm-error-decomp}.
\end{proof}


\section{Applications of Fourier frames and numerical implementation}\label{sec:numerical}

In this section we present a method for approximating an unknown function $f$ from its finite sampling data $\vf=\{\langle f, \psi_\vj\rangle: \vone\leq \vj \leq \vm\}$ using the admissible frame approach for approximating the inverse frame operator $S^{-1}$. In particular, we define $f_{\vn,\vm}$ as the approximation of $f$ given by
\begin{equation*}
f_{\vn,\vm}:= \sum_{\vone\leq \vj \leq \vm}\langle f, \psi_\vj\rangle W_{\vn,\vm}^{-1} Q_{\vn} f.
\end{equation*}
Following the one-dimensional construction derived in \cite{Song2011b}, we have
\begin{equation}
f_{\vn,\vm} = \sum_{\vone\leq \vl\leq \vn} c_\vl \phi_\vl, 
\label{eq:fnAF}
\end{equation}
where $\vc= \mOmega^\dagger \vf$, $\mOmega=[\langle \psi_\vj, \phi_\vl\rangle]_{\vone\leq \vj\leq \vm, \vone\leq \vl\leq \vn},$ and $\mOmega^\dagger$ is the Moore-Penrose pseudo-inverse of $\mOmega$.  We refer to (\ref{eq:fnAF}) as the admissible frame approximation of $f$.

For our numerical experiments, we consider the sampling frame (Definition \ref{def:frame}) given by
\begin{equation*}
\psi_\vj = \se^{-\pi i \vlambda_\vj},
\end{equation*}
where each $\lambda_\vj$ lies on some non-uniform sampling patterns in 2D. The patterns in our examples reflect prototypes of what may arise in a variety of applications, including magnetic resonance imaging (MRI) and synthetic aperture radar (SAR).  We note that in several applications the sampling patterns do not in fact yield a frame,  except for its given finite span.  However, our numerical algorithm provides a stable and robust mechanism for reconstructing an image in the projected admissible frame space.  As in  \cite{GelbSong2014}, we anticipate that our method can be sped up numerically by using the NFFT. This will be considered in future work. Figure \ref{fig:allpatterns} displays the sampling patterns described below.
\begin{figure}[h!]
\centering
\subfigure[Jittered]{\includegraphics[width=.22\textwidth]{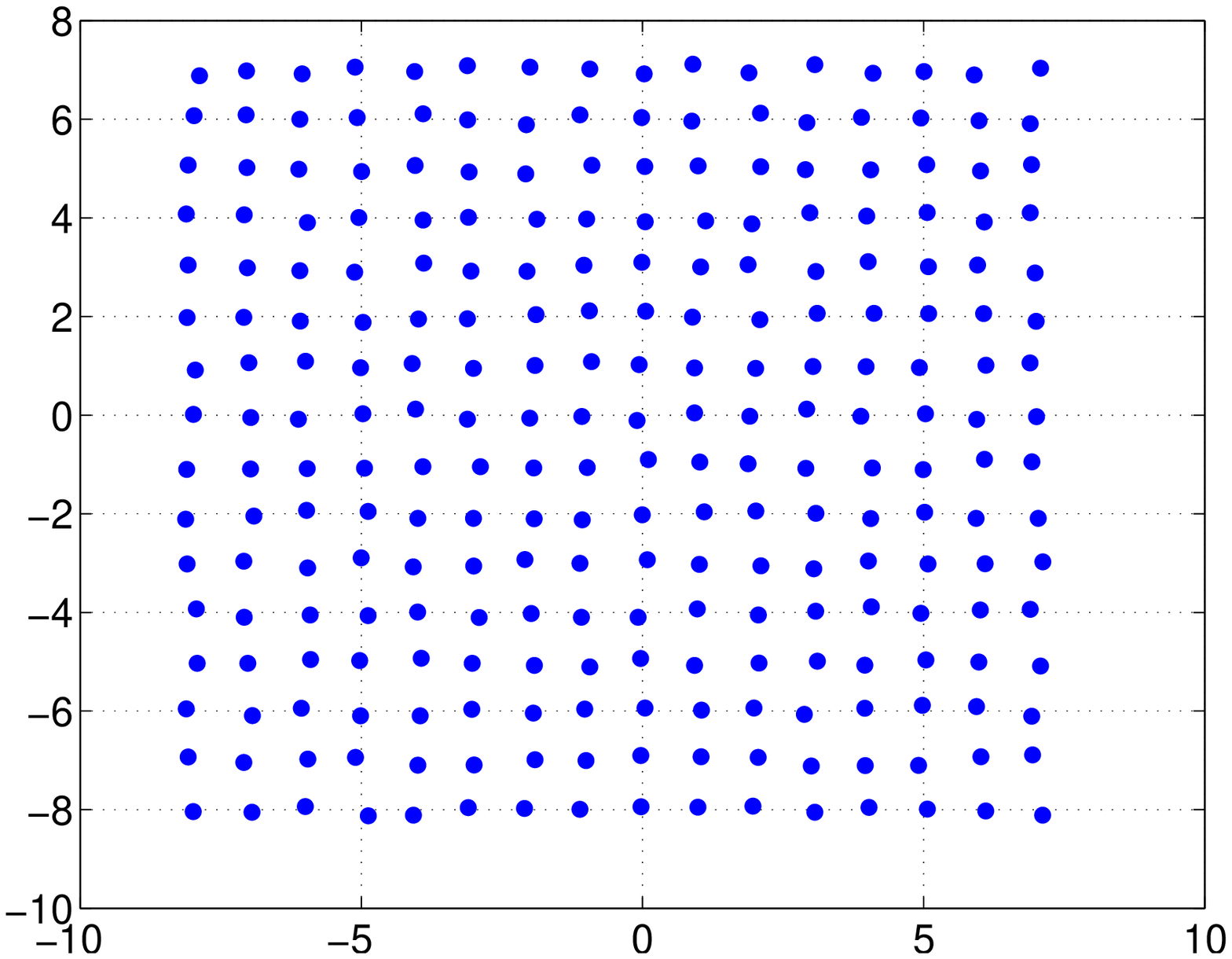}}
\subfigure[Rosette]{\includegraphics[width=.22\textwidth]{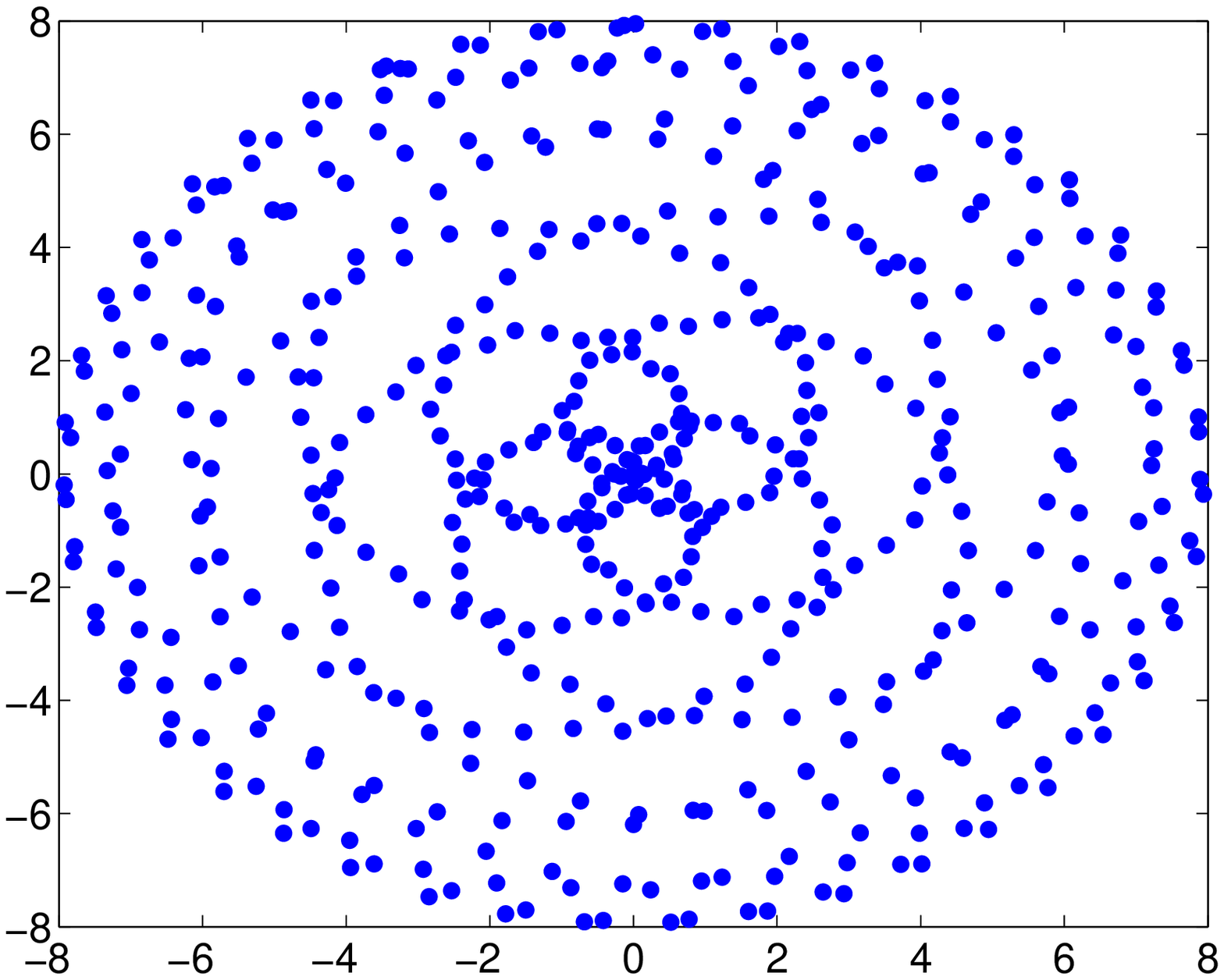}}
\subfigure[Spiral]{\includegraphics[width=.22\textwidth]{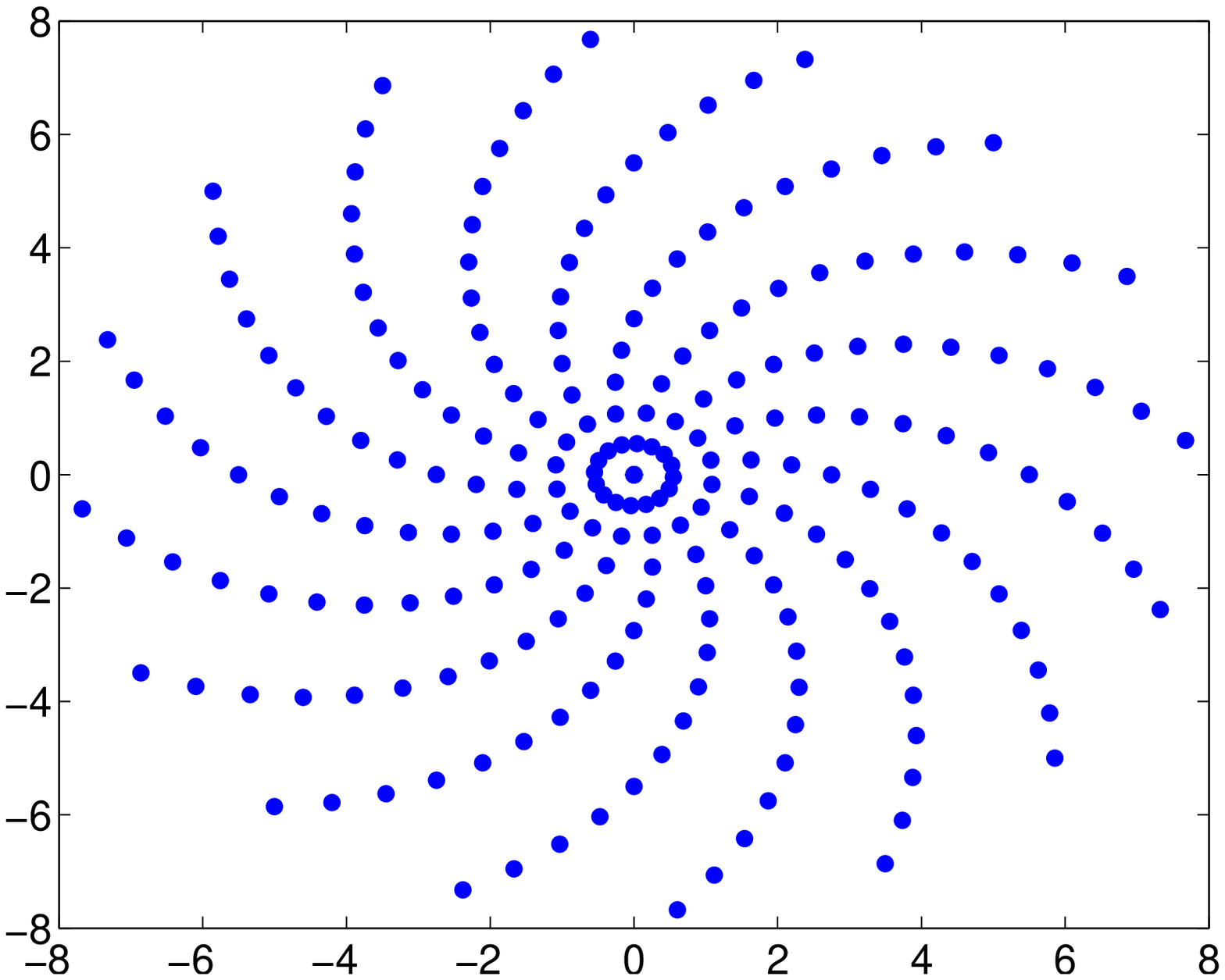}}
\subfigure[Polar]{\includegraphics[width=.22\textwidth]{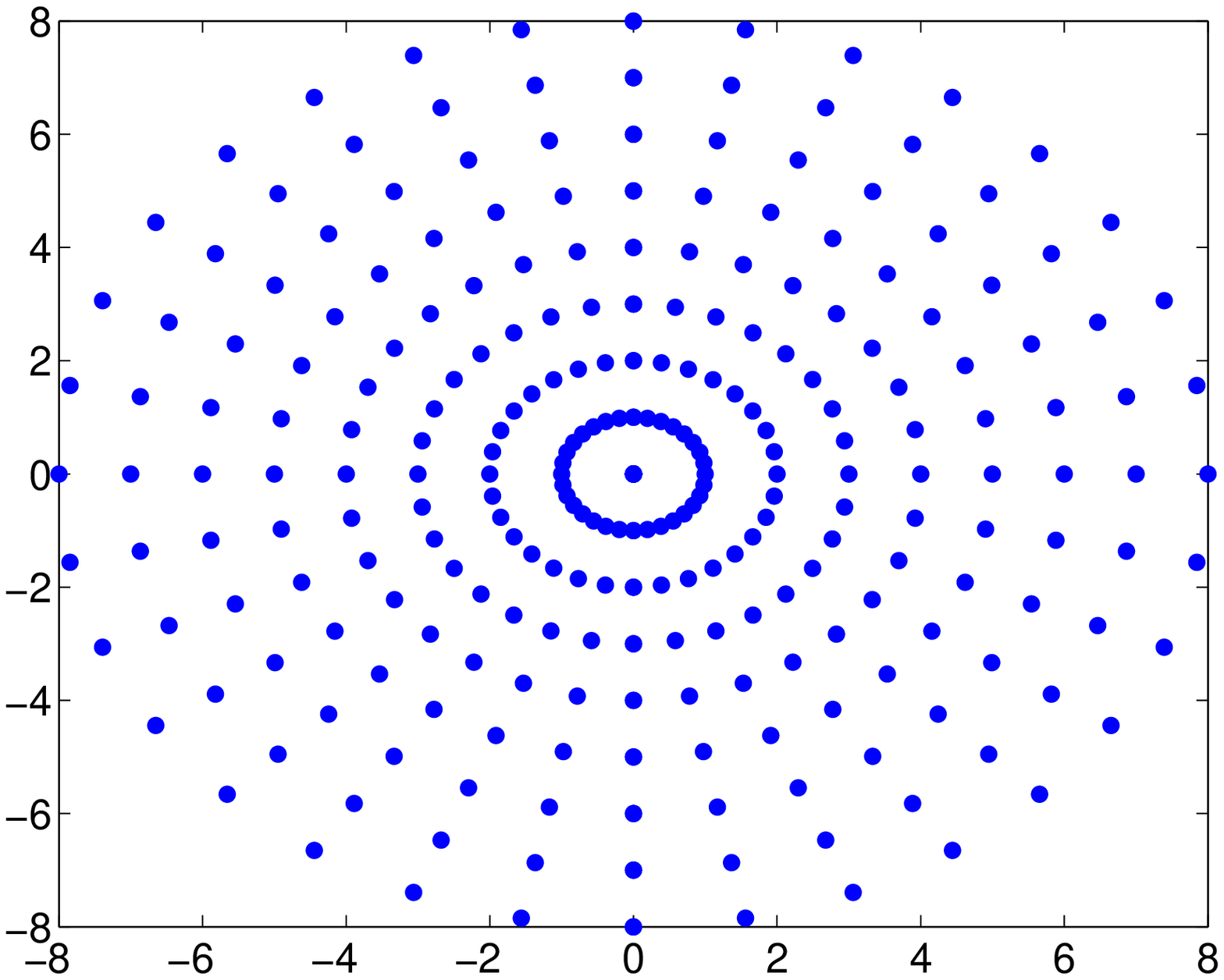}}
\caption{Examples of various sampling patterns}
\label{fig:allpatterns}
\end{figure}
\begin{enumerate}[(1)]
\item Jittered sampling:
\begin{equation*}
\lambda_\vj = \vj + \vepsilon_\vj,
\end{equation*}
where $\vepsilon_\vj$ is a small perturbation in 2D. Typically, we assume $\vepsilon_\vj\in [-1/4, 1/4]^2$. Note that the jittered sampling case constitutes a frame in $L^2[-1,1]$ when the perturbations lie in such a range \cite{Sun1999159}.

\item Rosette sampling:
\begin{equation*}
\lambda_j = k_{max} (\cos(w_1 t_j)\cos(w_2t_j), \cos(w_1 t_j)\sin(w_2t_j)),
\end{equation*}
where $k_{max}, w_1, w_2$ are positive constants and $t_j\in [0, T]$ for some $T>0$.

\item Spiral sampling:
\begin{equation*}
\lambda_\vj = (c\theta_\vj \cos(2\pi \theta_\vj),  c\theta_\vj\sin(2\pi \theta_\vj)),
\end{equation*}
where $c>0$ and $\theta_\vj>0$.

\item Polar sampling:
\begin{equation*}
\lambda_\vj = (c r_{j_1} \cos(\theta_{j_2}), c r_{j_1} \cos(\theta_{j_2})),
\end{equation*}
where $c>0$, $r_{j_1}=\frac{j_1}{R}\in [-1/2, 1/2)$, and $\theta_{j_2} = \frac{\pi j_2}{T} \in [-\pi/2, \pi/2)$.

\end{enumerate}

In what follows, we compare our admissible frame (A-F) method, (\ref{eq:fnAF}),  with the Casazza-Christensen (C-C) method, which uses (\ref{eq:V_nm}) in the approximation of (\ref{eq:f_frame}). It is important to note that the main difference between the C-C method and the A-F method is that in the C-C method, the sampling frame and the reconstruction frame are the same, while the A-F method chooses an admissible frame for the reconstruction based on the given sampling frame.  Here we use the standard Fourier basis as the reconstruction basis for all sampling patterns, which can be shown to be admissible with respect to jittered sampling using an extension of the one-dimensional proof of admissibility shown in \cite{Song2011b}. We remark that the other sampling patterns (rosette, spiral, and polar) may fail to satisfy the admissibility condition with respect to the standard Fourier basis. However, our numerical experiments demonstrate that our method still out-performs the C-C algorithm  even though the admissibility condition fails to hold.  Hence we conclude that our method is robust with respect to different sampling patterns.  We start by considering the following test function:

\begin{example}
\label{ex:example1}
\begin{equation*}
f_1(\vx)=\sin(4\pi x_1) \sin(2\pi x_2), \quad \vx=(x_1, x_2)\in [-1,1]^2.
\end{equation*}
\end{example}

\begin{table}[htbp]
\centering
\small
\begin{tabular}{c| c c| c c| c c| c c}
	&\multicolumn{2}{|c|}{Jittered} &\multicolumn{2}{|c|}{Rosette} & \multicolumn{2}{|c|}{Spiral} & \multicolumn{2}{c}{Polar} \\
$M$ & A-F & C-C& A-F & C-C& A-F & C-C& A-F & C-C\\
\hline
$8^2$ &2.5E-1&2.5E-1 &	2.7E0&3.1E-1& 1.9E-1 & NaN &9.1E-1&NaN\\
$16^2$&2.4E-14& 6.0E-3&1.4E-1&3.7E-1& 2.2E-2&NaN& 1.1E-2&NaN\\
$32^2$&2.0E-15&1.3E-3&7.0E-4&8.4E-2&1.2E-5&NaN&1.2E-5&NaN	\\
$64^2$& 1.9E-15&7.4E-4&1.9E-5&4.2E-4&1.2E-9& NaN&4.2E-5& 3.4E-2\\
\bottomrule
\end{tabular}
 \caption{Mean square errors using the A-F and C-C methods for Example \ref{ex:example1}.}
\label{table:mse}
\end{table}

Table \ref{table:mse} compares the mean square errors for approximating $f_1(\vx)$ in Example \ref{ex:example1} using A-F and C-C methods for various data size $M=m_1m_2$. In both cases, the number of reconstruction frame elements, $N = n_1n_2$, is chosen according to Proposition \ref{prop:admsecondterm}, given by (\ref{eq:admmchoice}).  Figures \ref{fig:image_jit} and \ref{fig:image_other} show the reconstructed images when $M = 64^2$ using both methods for jittered, rosette, spiral, and polar sampling, respectively.  A one-dimensional cross section of the reconstructed functions when $f_1 = 0$ is shown in Figure \ref{fig:slice} for each respective pattern. Since the C-C method fails to approximate $f_1(\vx)$ when the data are sampled on the spiral pattern, no value is given in Figure \ref{fig:slice}(c).

\begin{figure}[htbp]
\centering
        \subfigure[$f_1(\vx)$]{\includegraphics[scale = .21]{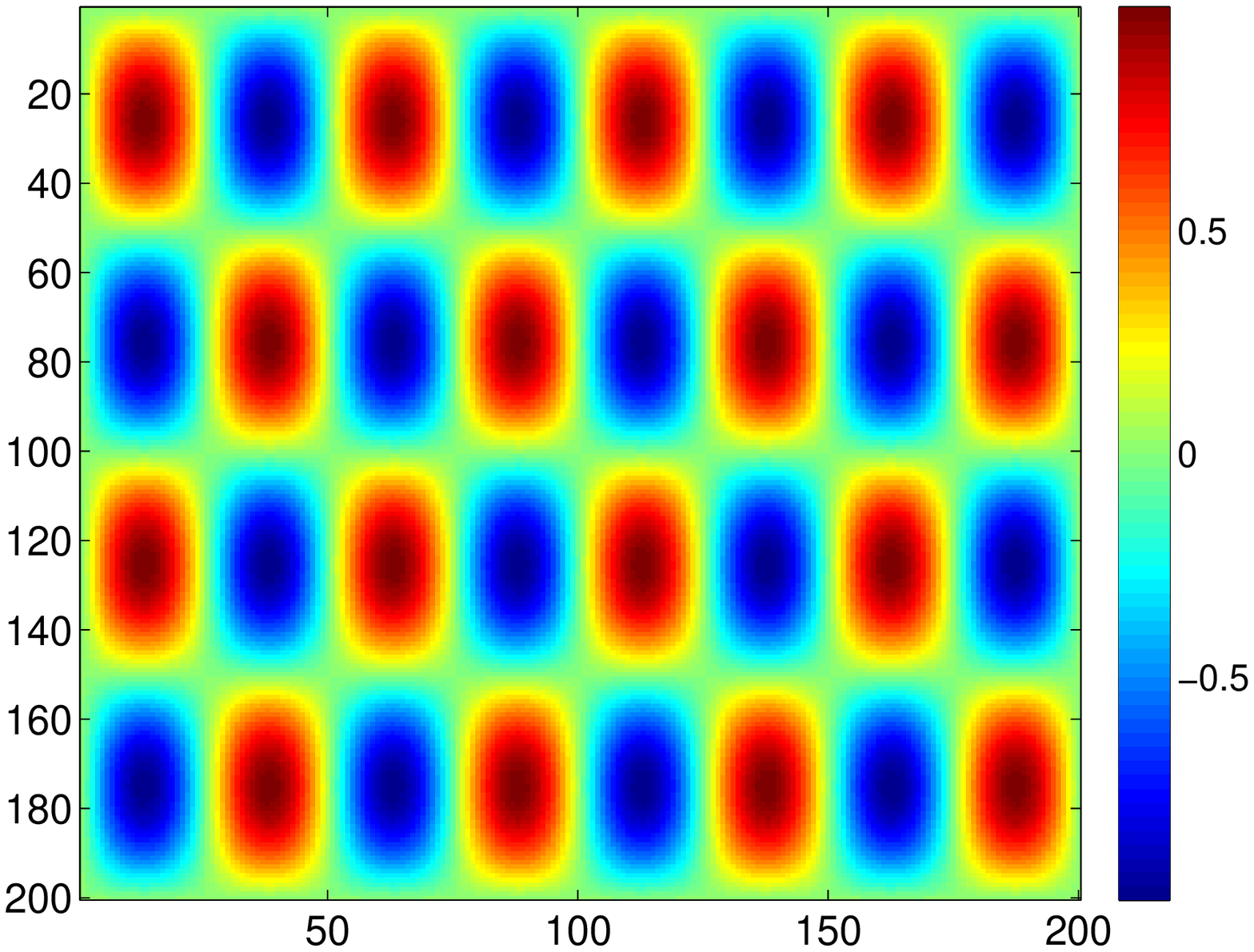}}
        \subfigure[A-F reconstruction]{\includegraphics[scale = .21]{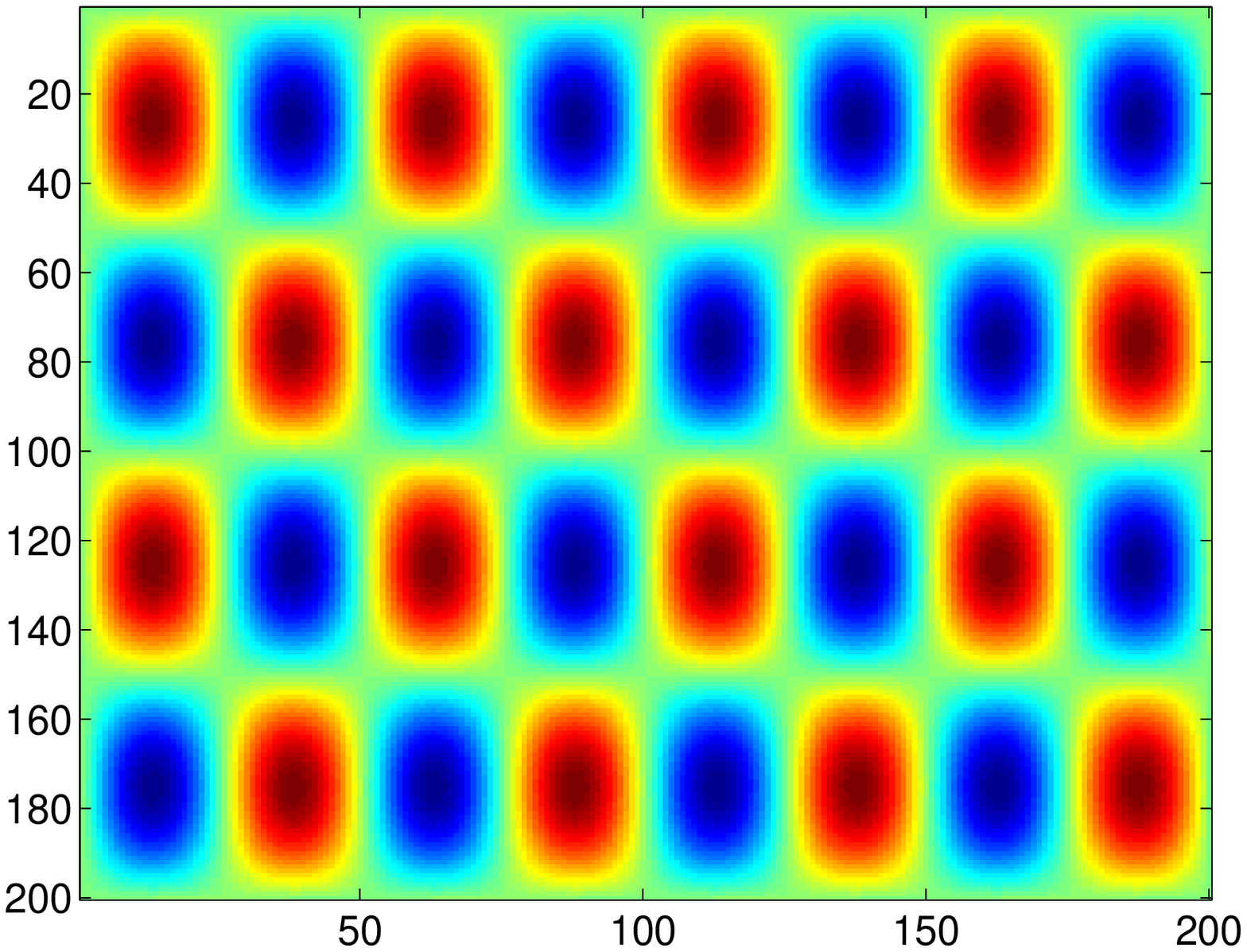}}
 	  \subfigure[C-C reconstruction]{\includegraphics[scale = .21]{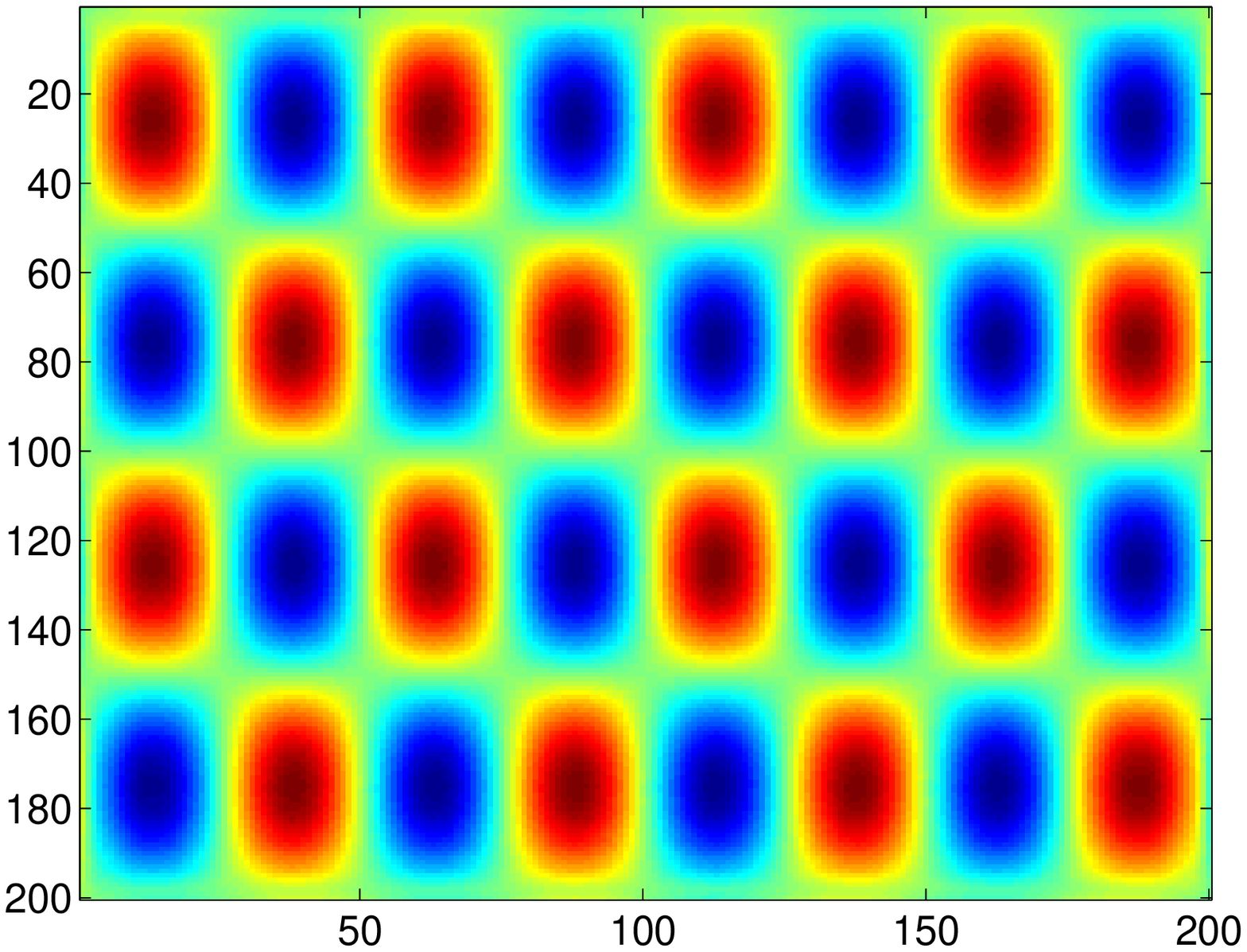}}
        \caption{Reconstruction of $f_1(\vx)$ in Example \ref{ex:example1} with jittered sampling.}
\label{fig:image_jit}
\end{figure}

\begin{figure}[htbp]
\centering
\begin{tabular}{c c c}
Rosette & Spiral & Polar\\
\includegraphics[scale = .21]{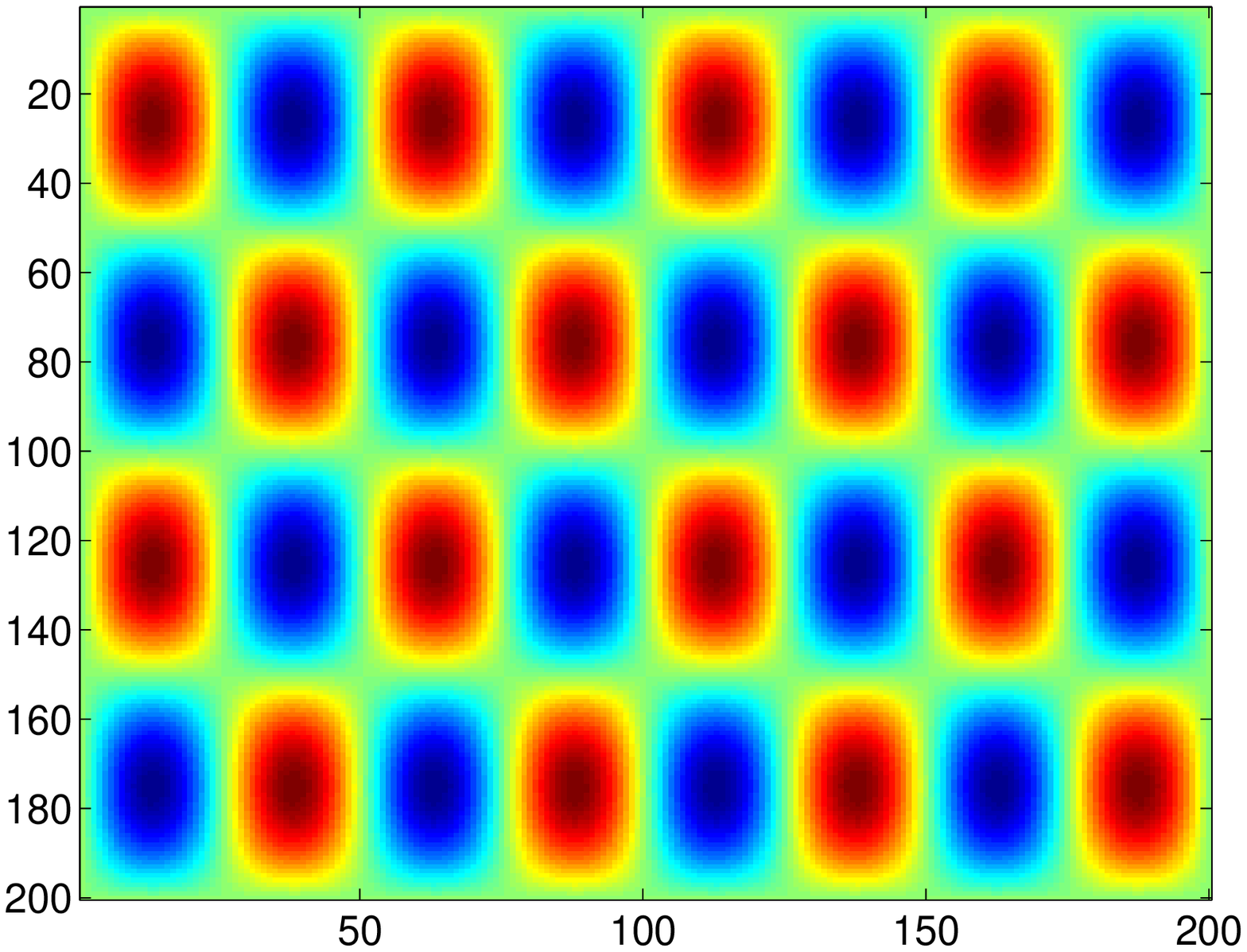} & \includegraphics[scale = .21]{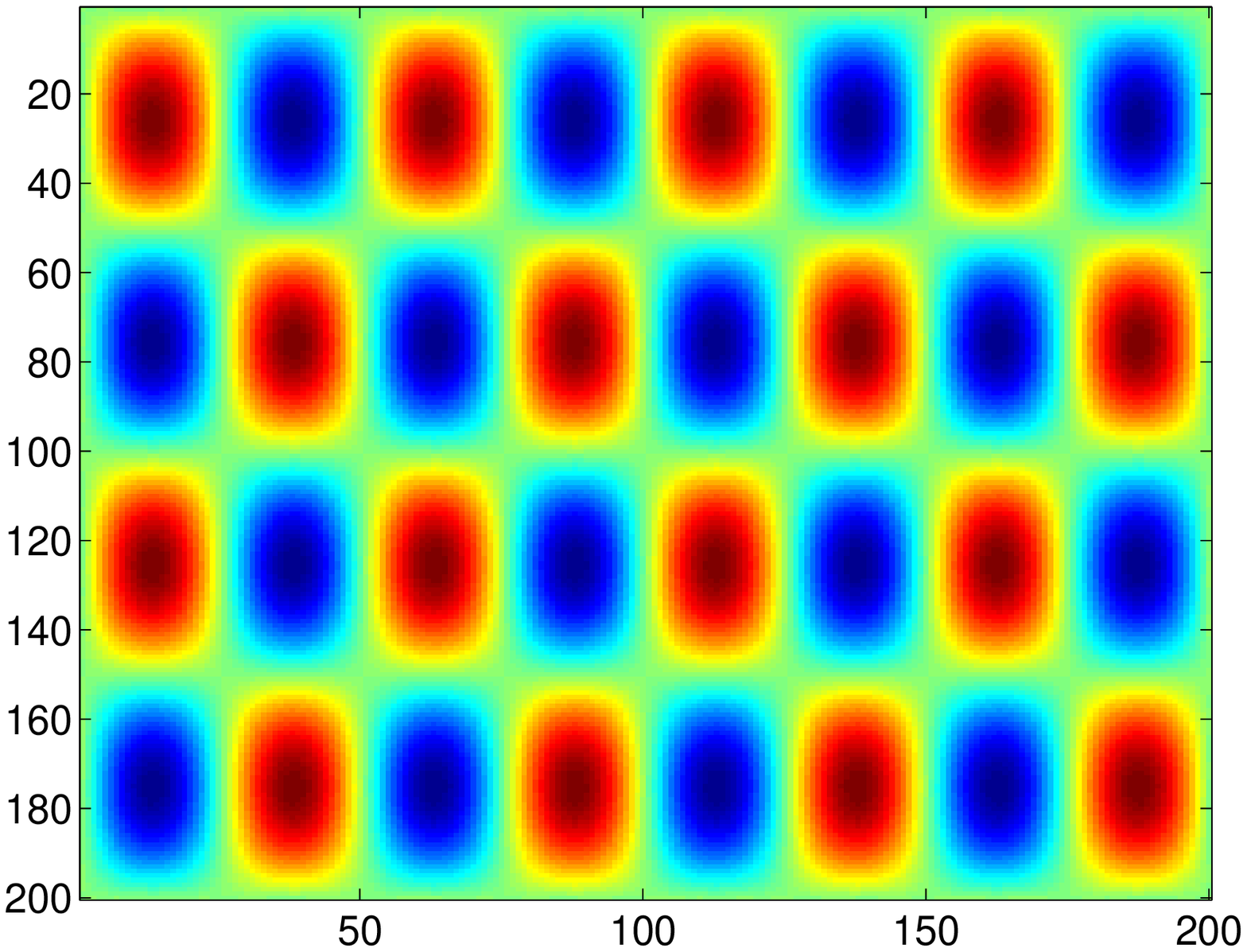} & \includegraphics[scale = .21]{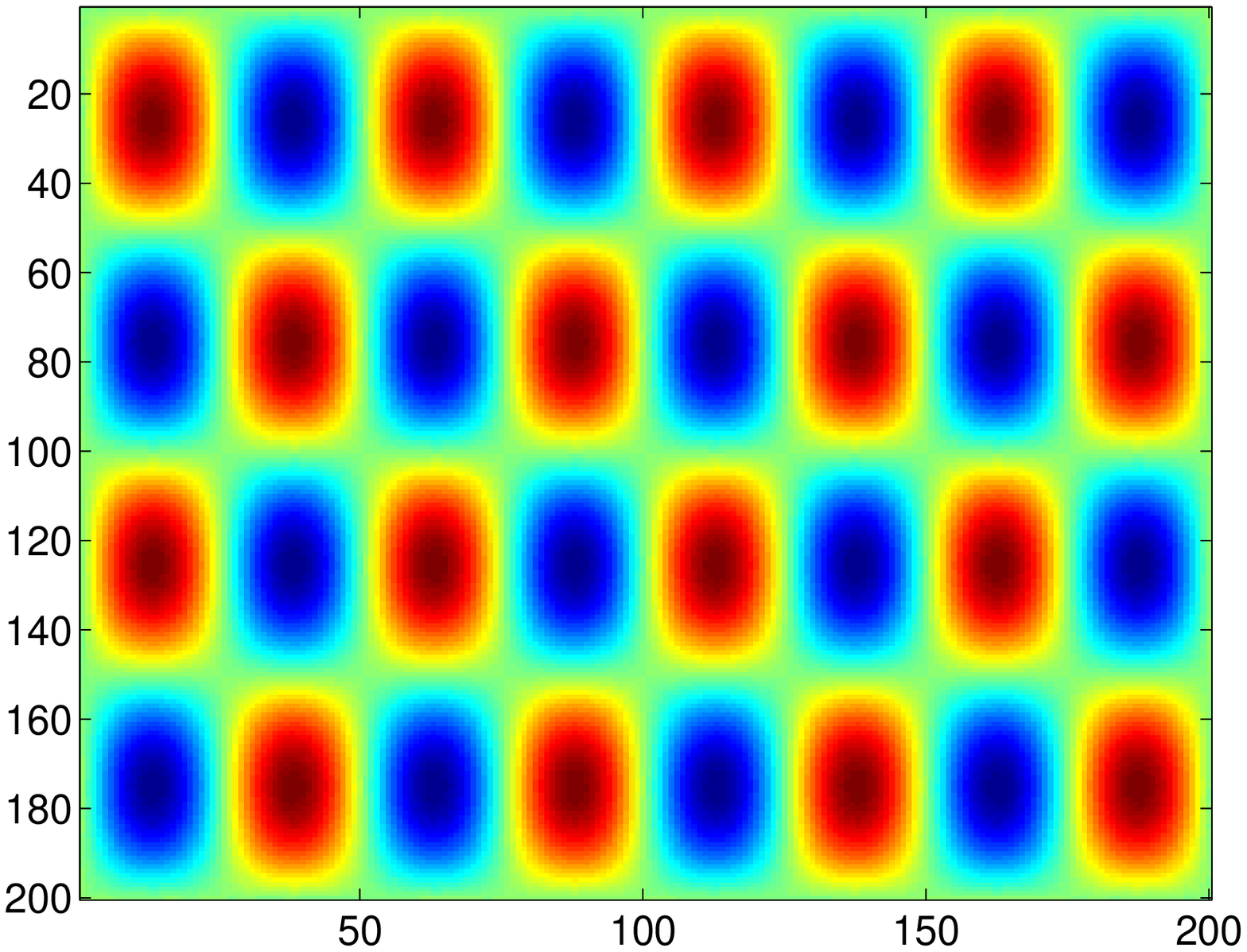}\\
\includegraphics[scale = .21]{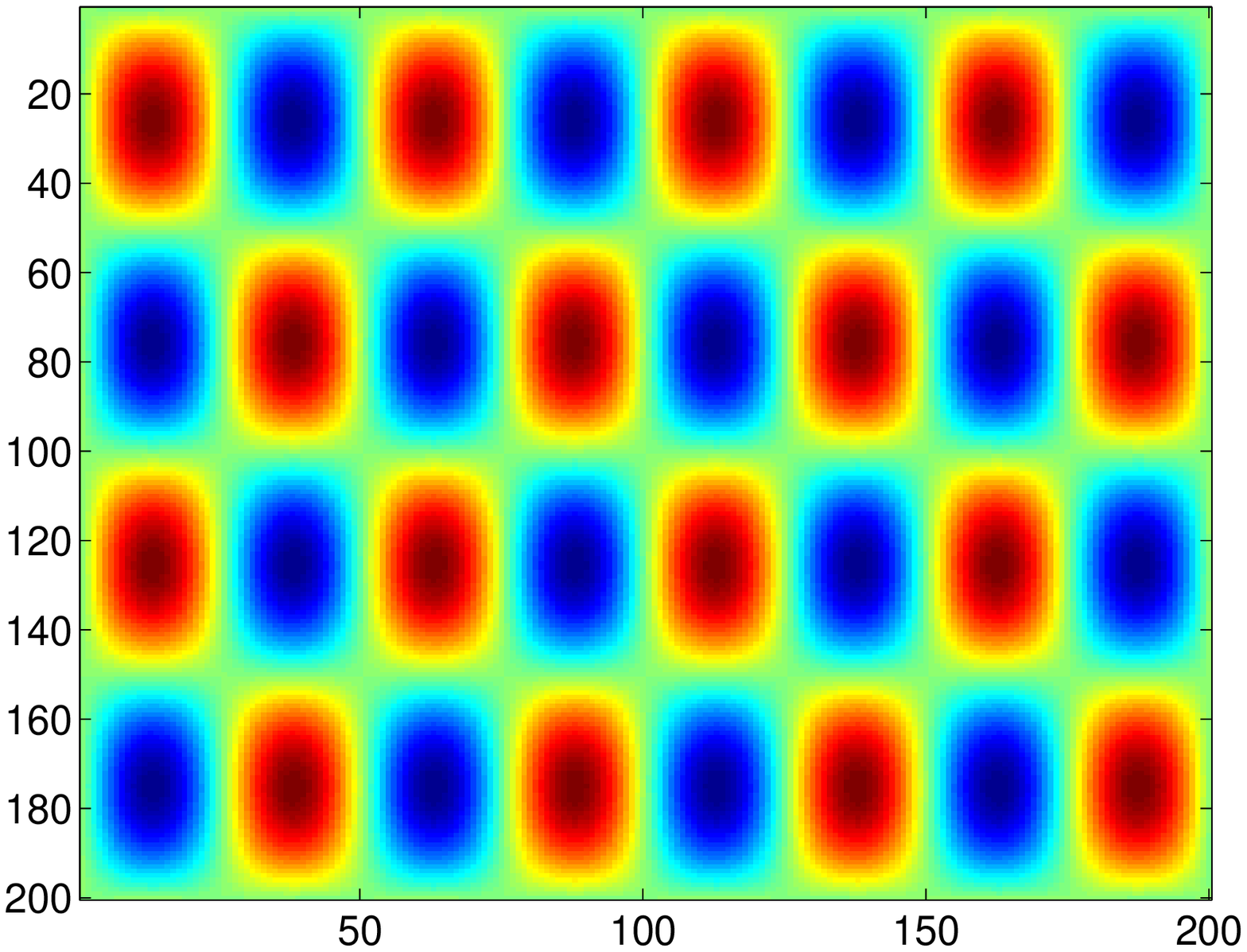} & \includegraphics[scale = .21]{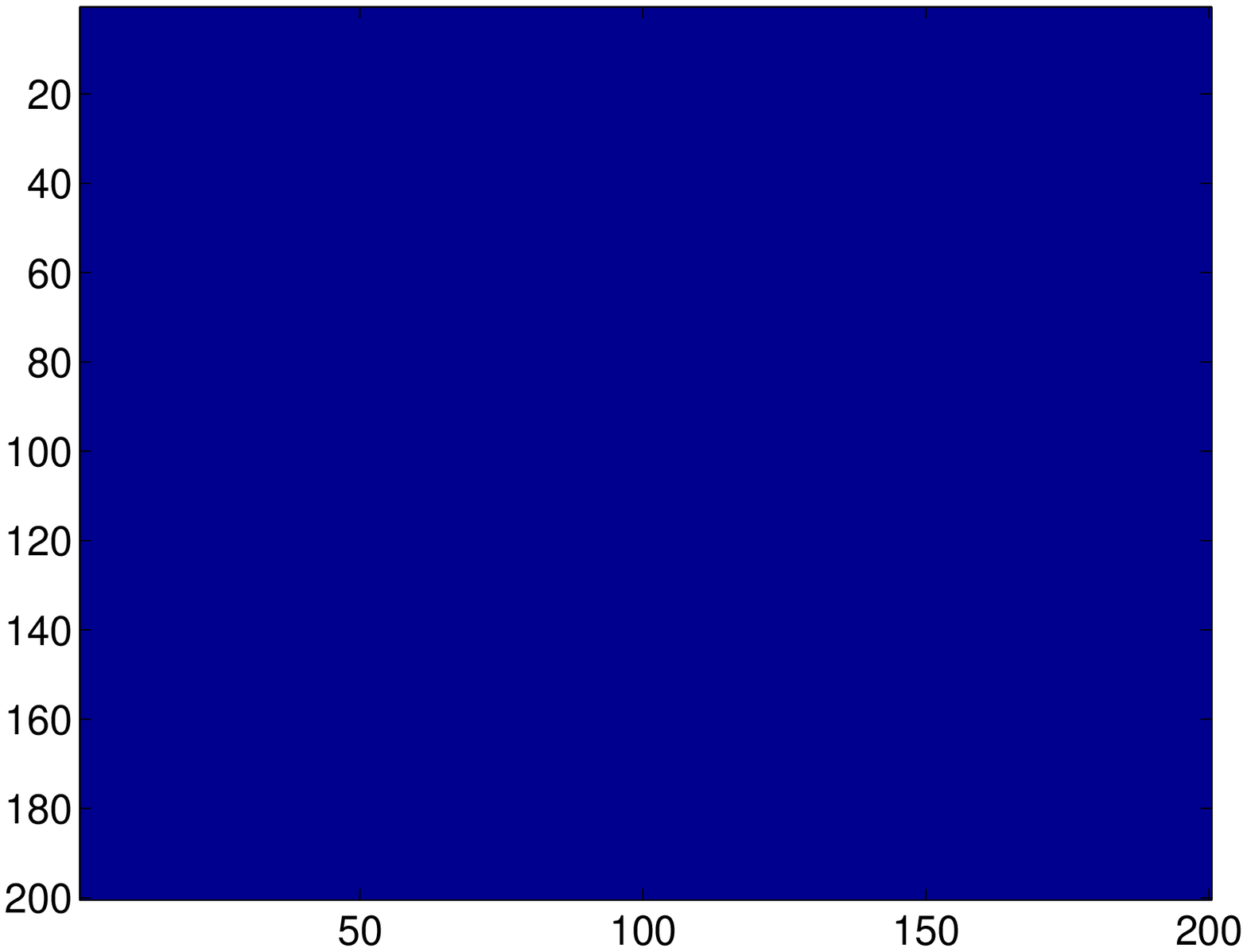}& \includegraphics[scale = .21]{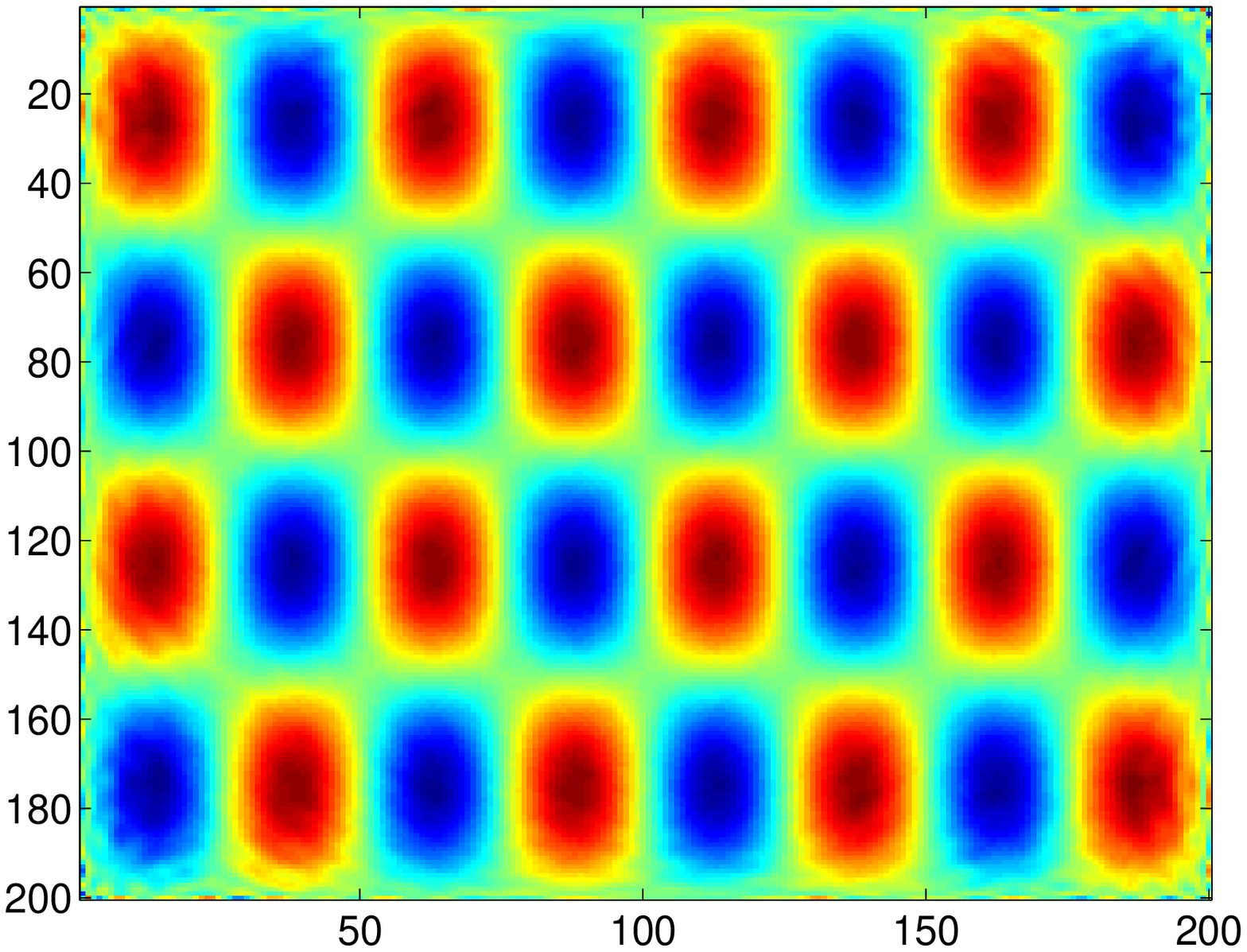}
\end{tabular}
\caption{Reconstruction of $f_1(\vx)$ in Example \ref{ex:example1} with the A-F method (top row) and the C-C method (bottom row).}
\label{fig:image_other}
\end{figure}

\begin{figure}[htbp]
\centering
        \subfigure[jittered sampling]{\includegraphics[scale = .15]{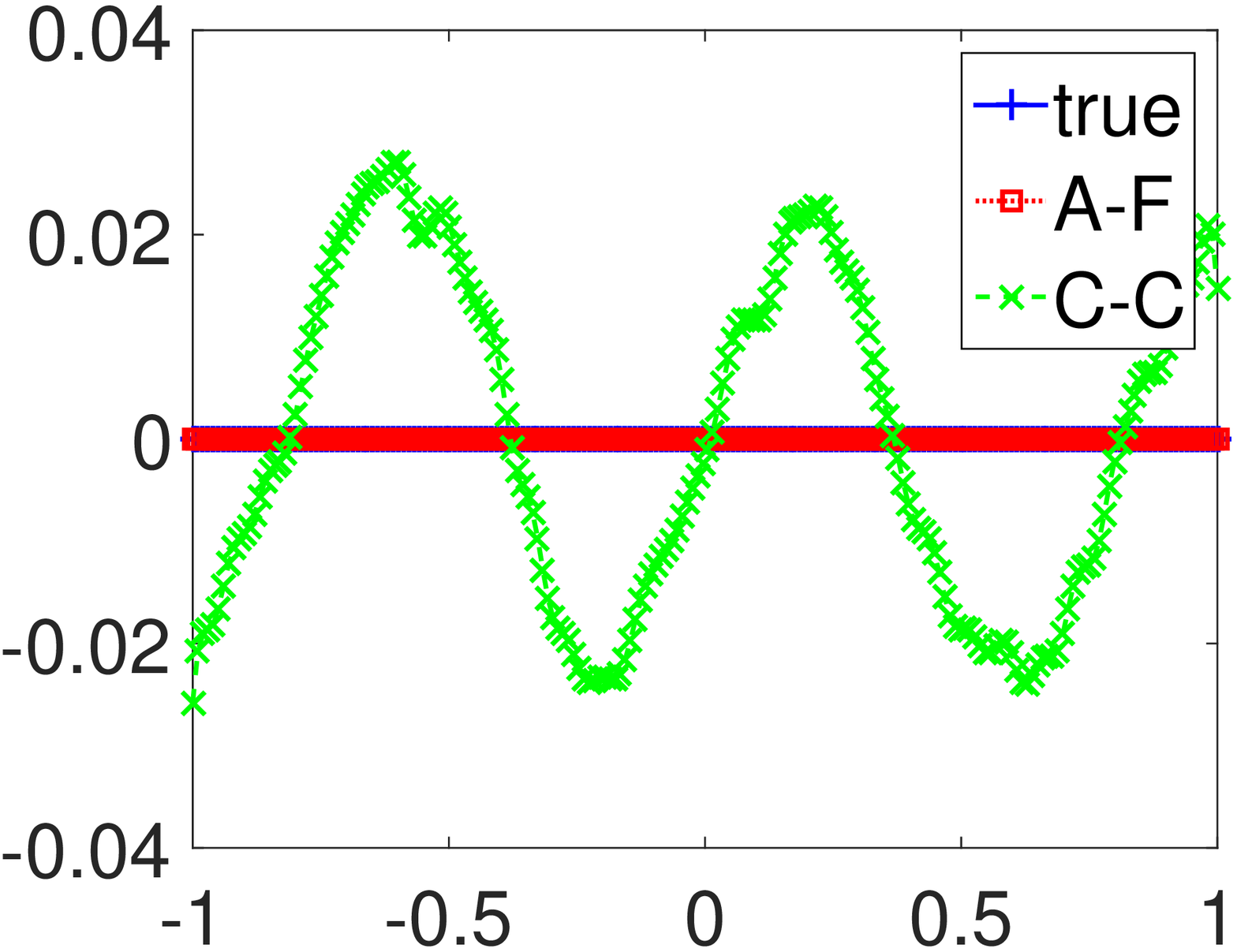}}
        \subfigure[rosette sampling]{\includegraphics[scale = .15]{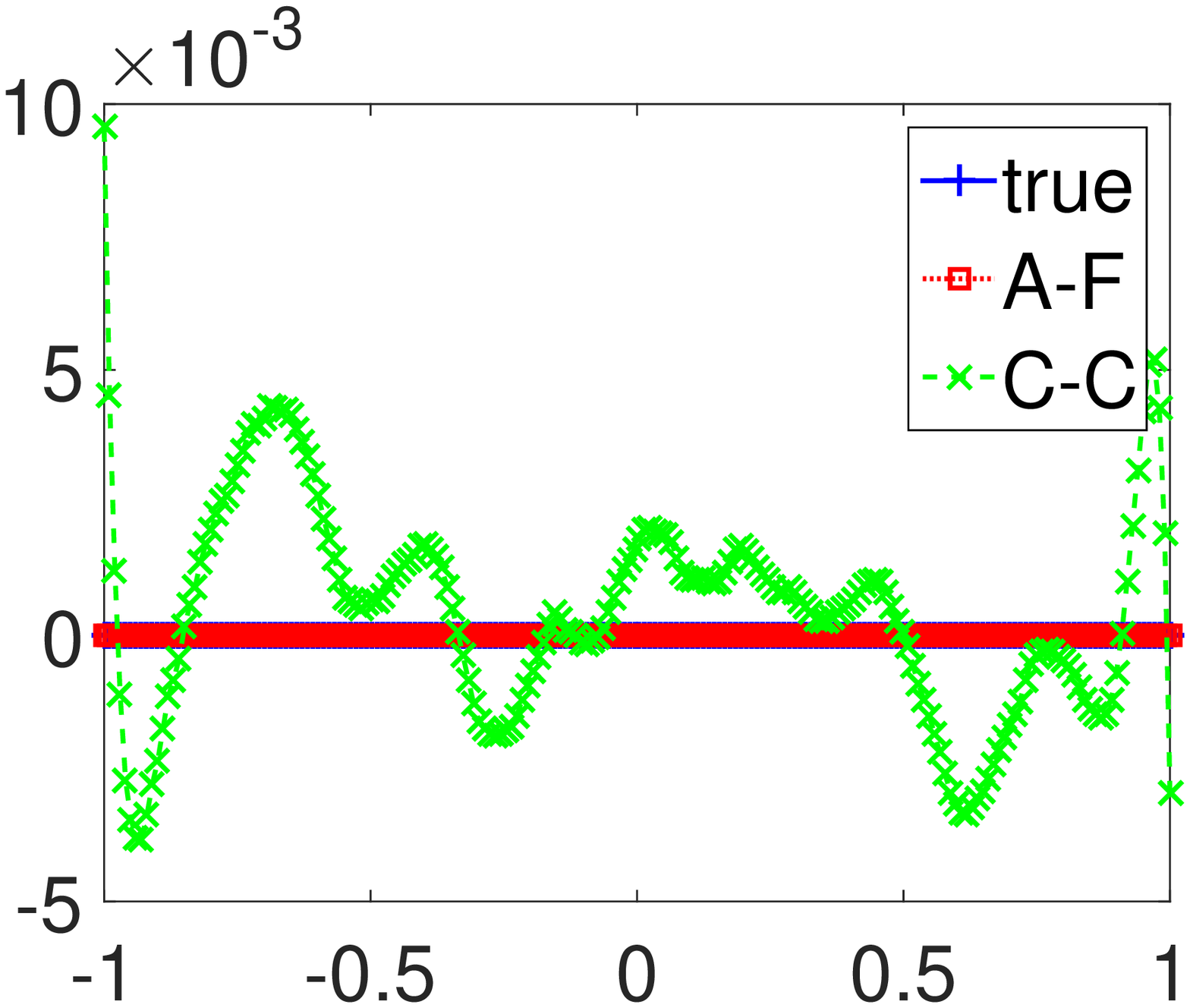}}
        \subfigure[spiral sampling]{\includegraphics[scale = .15]{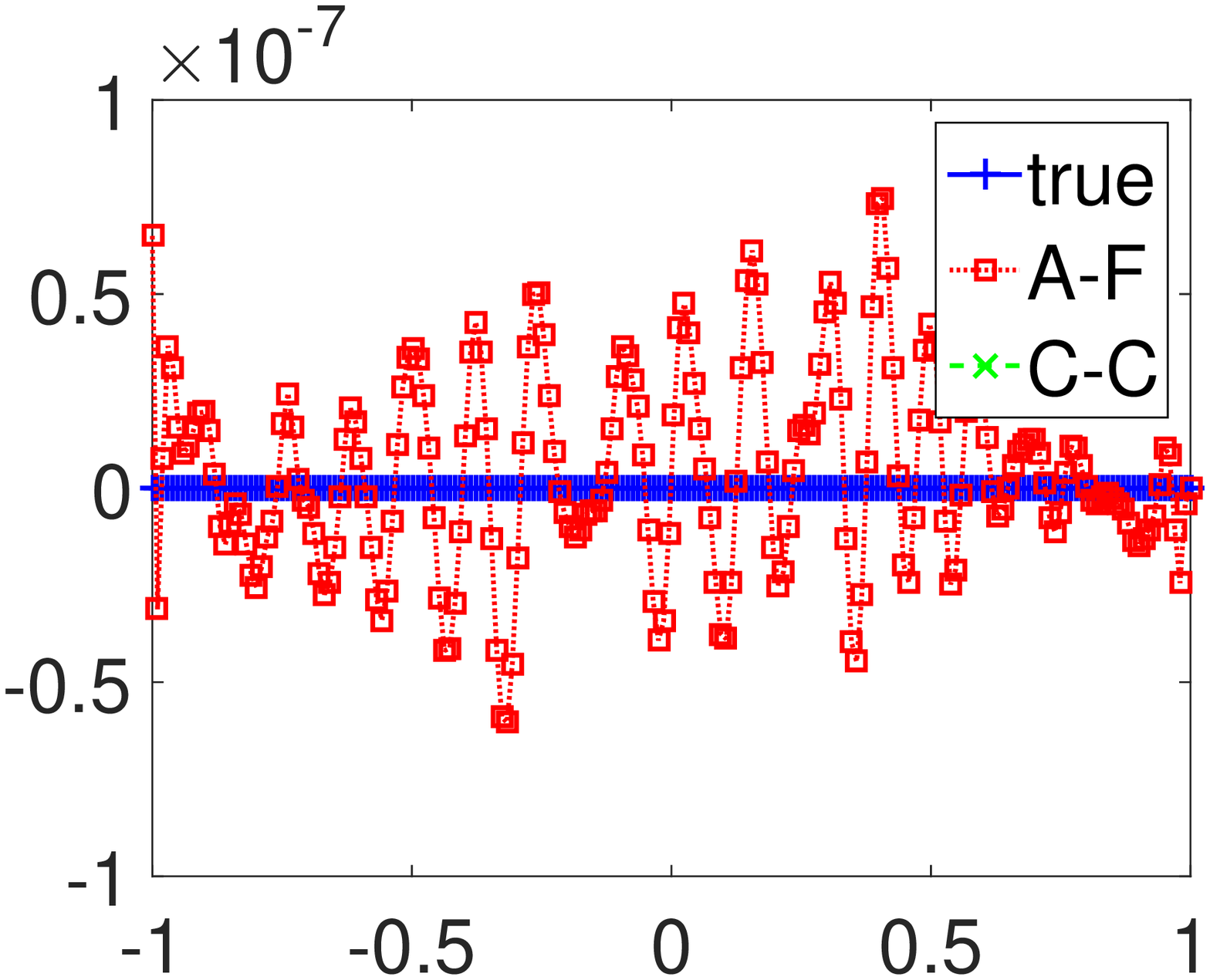}}
        \subfigure[polar sampling]{\includegraphics[scale = .15]{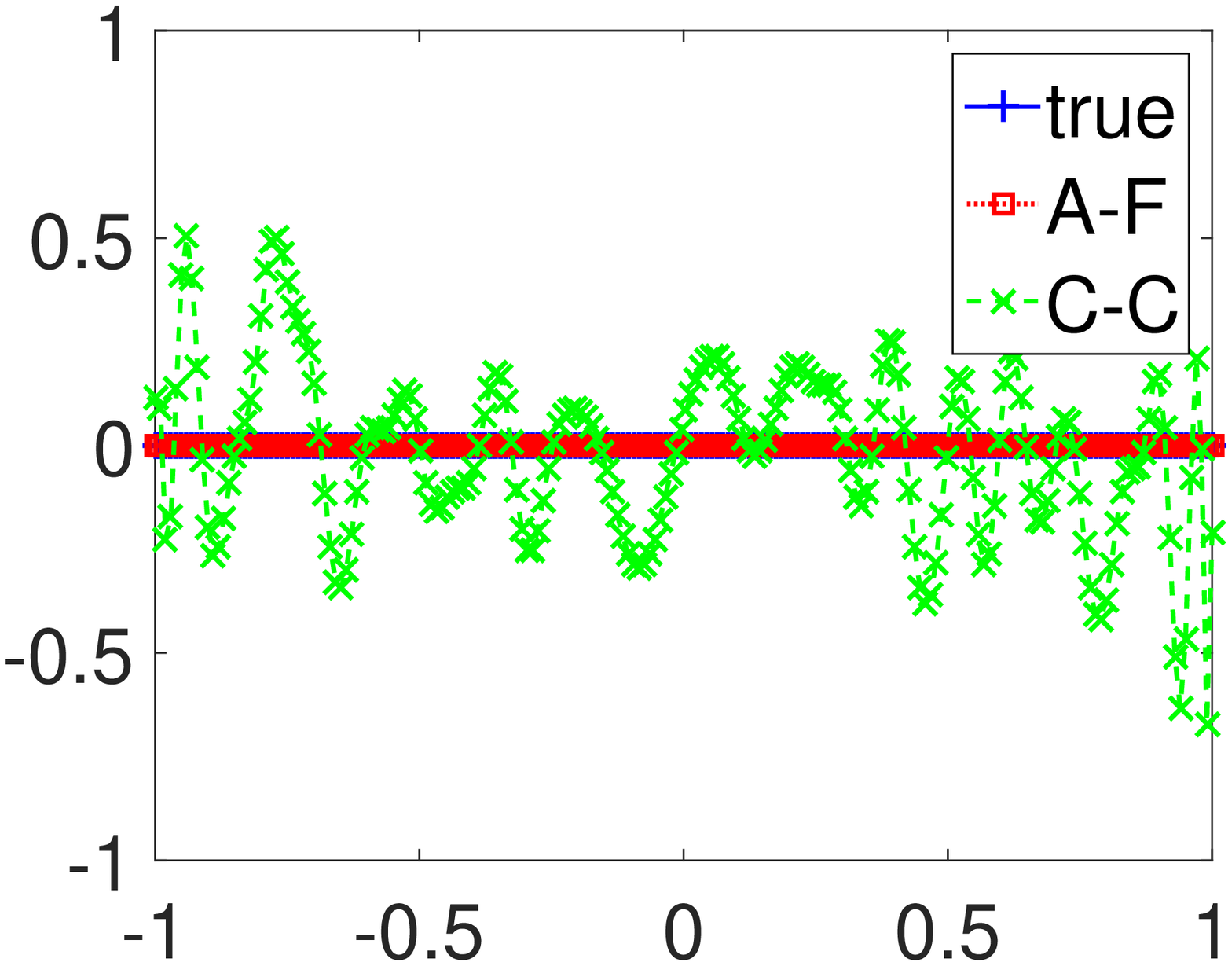}}
\caption{One-dimensional cross section where $f_1(\vx) = 0$ in Example \ref{ex:example1} for all sampling patterns.}
\label{fig:slice}
\end{figure}

For a second experiment, we consider the following function:
\begin{example}
\label{ex:example2}
\begin{equation*}
f_2(\vx) = \sin(4\pi x_1) (x_2^2-1)^2, \quad \vx=(x_1, x_2)\in [-1,1]^2.
\end{equation*}
\end{example}
Table \ref{table:f2mse} compares the mean square errors for approximating $f_2(\vx)$ in Example \ref{ex:example2} using A-F and C-C methods for various data size $M=m_1m_2$. In both cases, the number of reconstruction frame elements, $N = n_1n_2$, is chosen according to Proposition \ref{prop:admsecondterm}, given by (\ref{eq:admmchoice}).  Figures \ref{fig:f2image_jit} and \ref{fig:f2image_other} show the reconstructed images when $M = 64^2$ using both methods for jittered, rosette, spiral, and polar sampling, respectively.  A one-dimensional cross section of the reconstructed functions when $f_2 = 0$ is shown in Figure \ref{fig:f2slice} for each respective pattern. Since the C-C method fails to approximate $f_2(\vx)$ when the data are sampled on the spiral pattern, no value is given in Figure \ref{fig:f2slice}(c). 

\begin{table}[htbp]
\centering
\small
\begin{tabular}{c| c c| c c| c c| c c}
	&\multicolumn{2}{|c|}{Jittered} &\multicolumn{2}{|c|}{Rosette} & \multicolumn{2}{|c|}{Spiral} & \multicolumn{2}{c}{Polar} \\
$M$ & A-F & C-C& A-F & C-C& A-F & C-C& A-F & C-C\\
\hline
$8^2$ &2.2E-1&2.2E-1&9.6E-1&2.1E-1& 4.0E-1 & NaN &4.4E-1&NaN\\
$16^2$&6.2E-5& 7.1E-2&7.4E-2&1.7E-1& 1.7E-1&NaN& 4.4E-1&NaN\\
$32^2$&5.8E-6&3.0E-2&1.2E-3& 7.4E-2&9.1E-3&NaN&1.3E-1& NaN\\
$64^2$& 5.5E-7&1.3E-2&7.1E-4&3.7E1&4.4E-5& NaN&7.8E-2& 3.6E8\\
\bottomrule
\end{tabular}

\caption{Mean square errors using the A-F and C-C methods for Example \ref{ex:example2}.}
\label{table:f2mse}
\end{table}

Due to periodicity,  the Fourier basis is a good reconstruction basis for Example \ref{ex:example1}, and consequently the A-F method can fully resolve $f_1(\vx)$ when the sampling frame consists of jittered samples (and is indeed a frame).  Although the C-C method converges, it does so very slowly.  Since $f_2(\vx)$ does not retain periodicity in its derivatives, the Fourier basis does not provide the ideal reconstruction basis.  Nevertheless, the A-F method still yields fast convergence.   Moreover, even when the sampling pattern does not constitute a frame, the A-F method still provides a convergent reconstruction method for both examples.  This is not true for the C-C method, however. Finally, we note that if the target function is only piecewise smooth, the A-F approach allows the use of high order post-processing methods to improve the overall accuracy, \cite{GelbHines}.  

Our numerical experiments indicate that it may be possible to relax the admissible frame conditions in the convergence analysis of the A-F method.  This is clearly not the case for the C-C method, where the frame condition is essential for convergence. The robustness to sampling patterns of the A-F method is highly desirable as a variety of sampling patterns naturally arise in a number of applications.  Hence future investigations will include more general admissibility conditions that include a variety of sampling patterns.

\begin{figure}[htbp]
\centering
        \subfigure[$f_2(\vx)$]{\includegraphics[scale = .28]{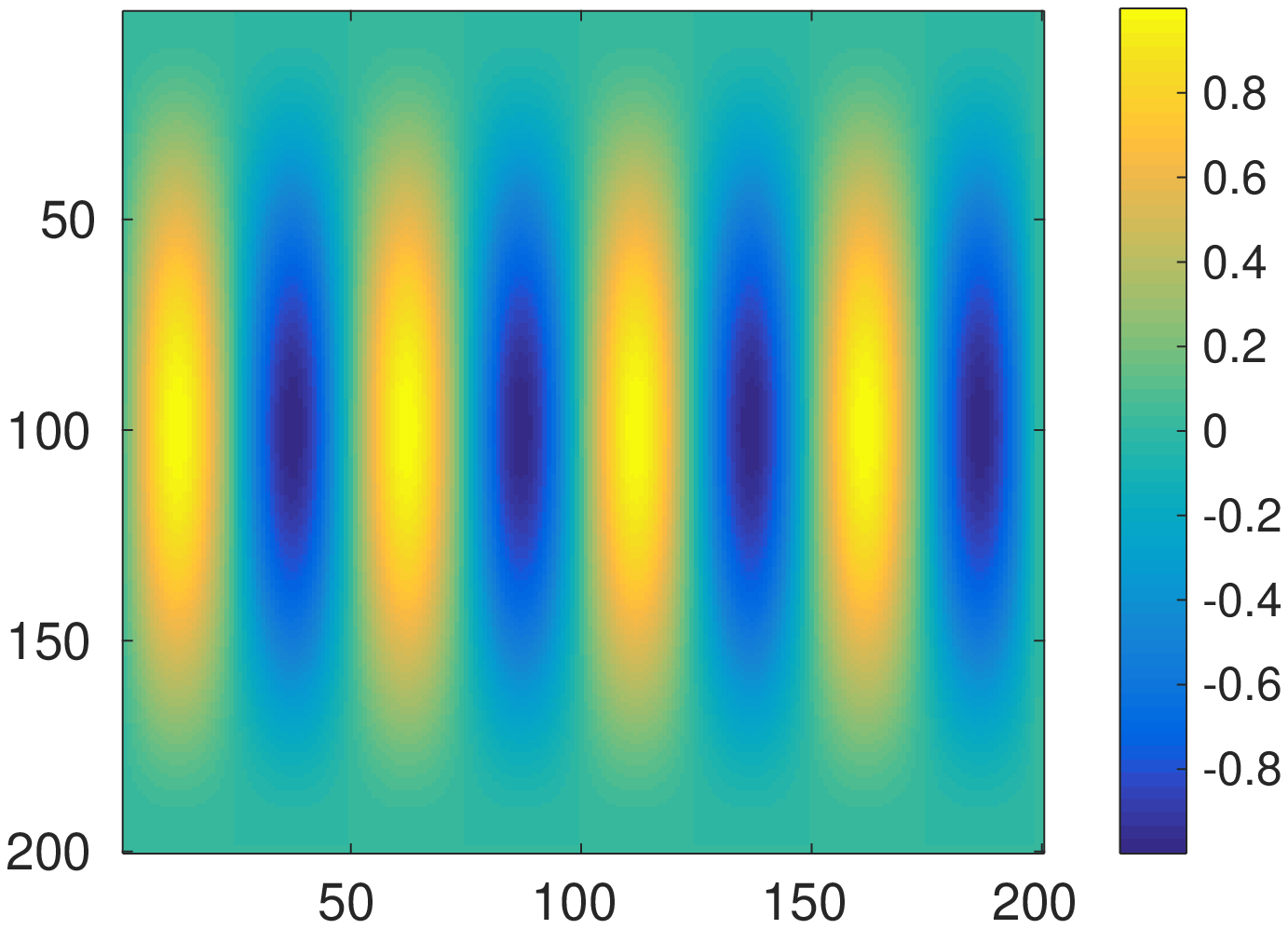}}
        \subfigure[A-F reconstruction]{\includegraphics[scale = .28]{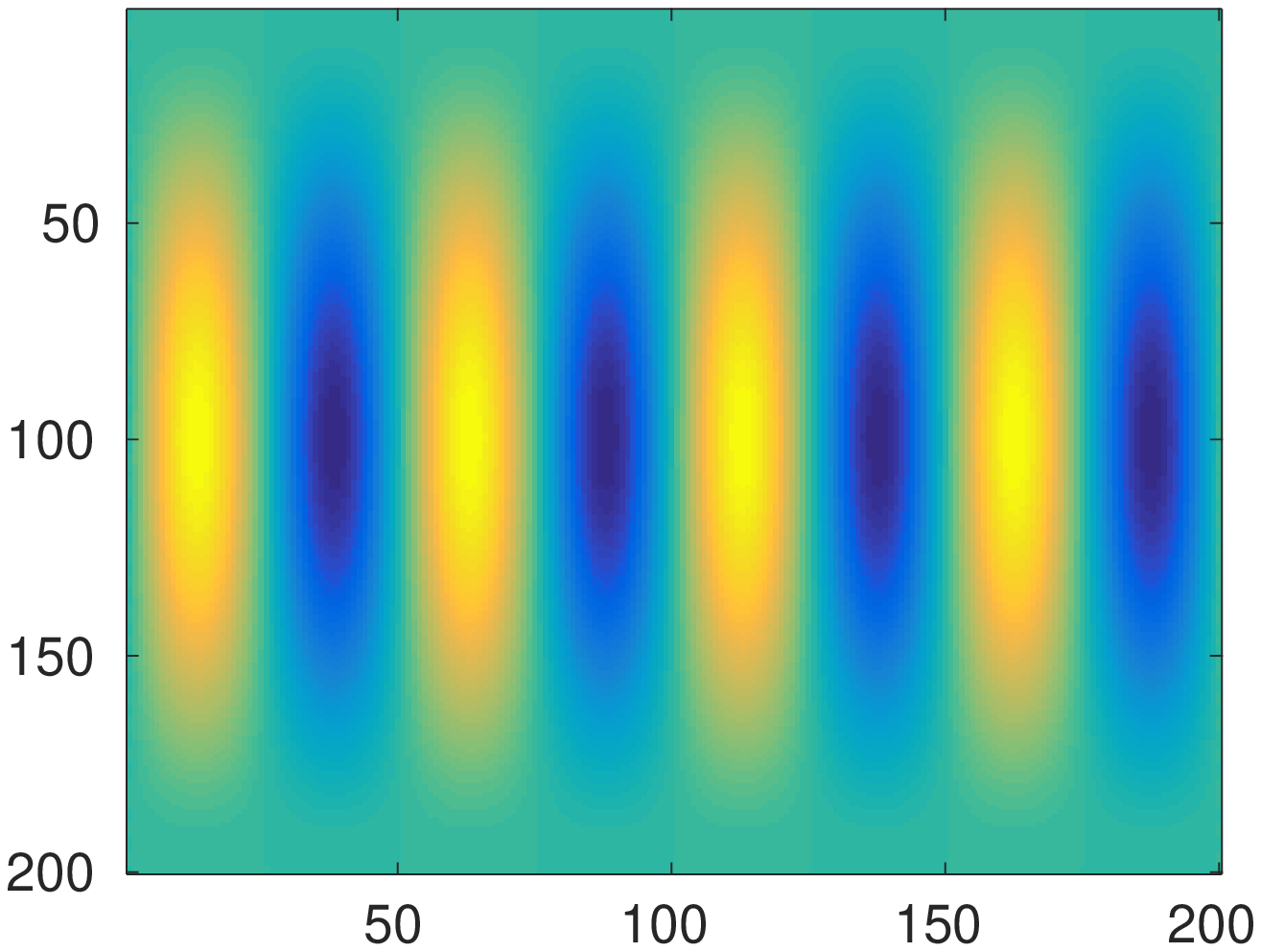}}
 	  \subfigure[C-C reconstruction]{\includegraphics[scale = .28]{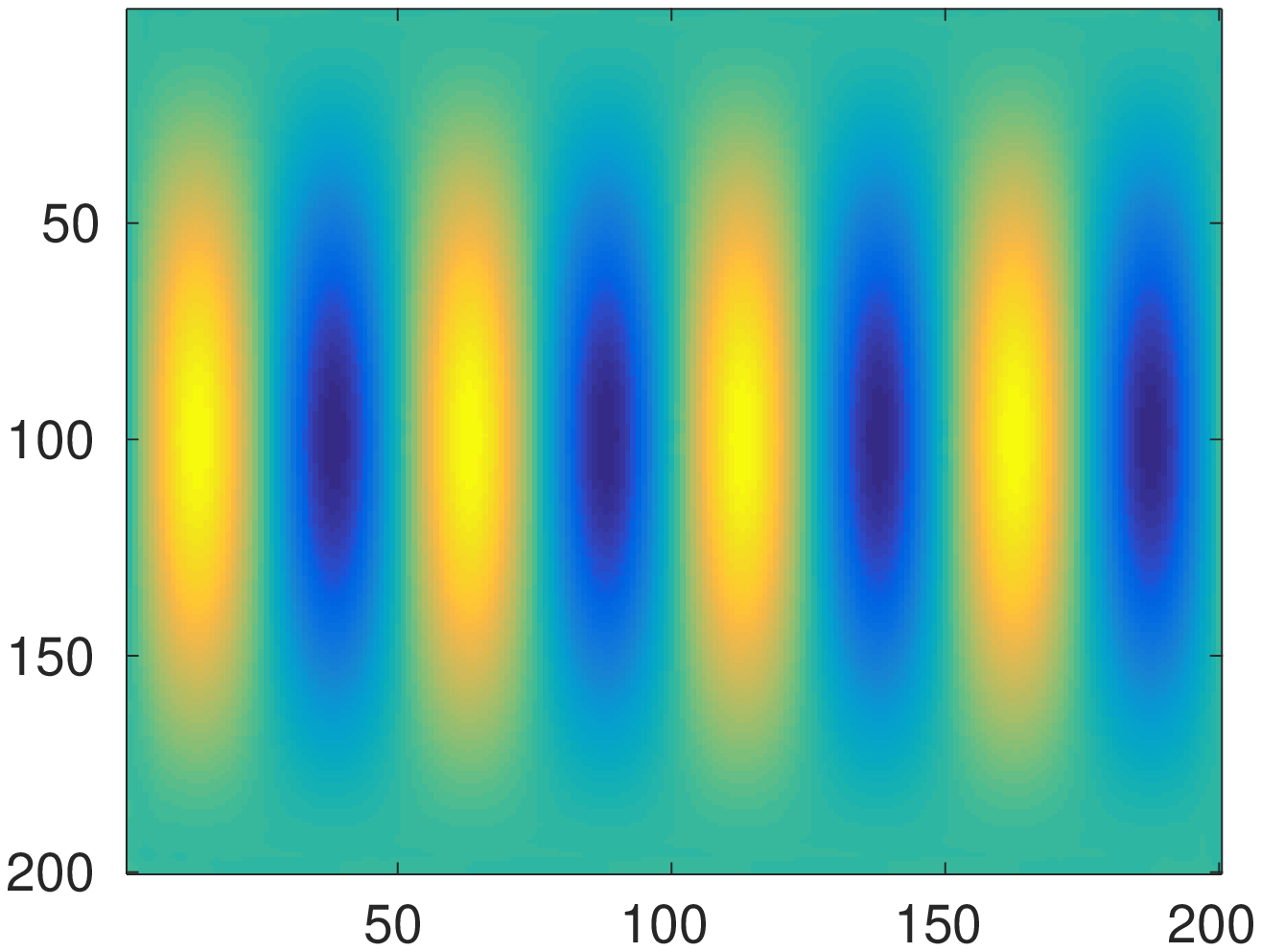}}
        \caption{Reconstruction of $f_2(\vx)$ in Example \ref{ex:example2} with jittered sampling.}
\label{fig:f2image_jit}
\end{figure}

\begin{figure}[htbp]
\centering
\begin{tabular}{c c c}
\medskip
Rosette& Spiral & Polar \\
\includegraphics[scale = .26]{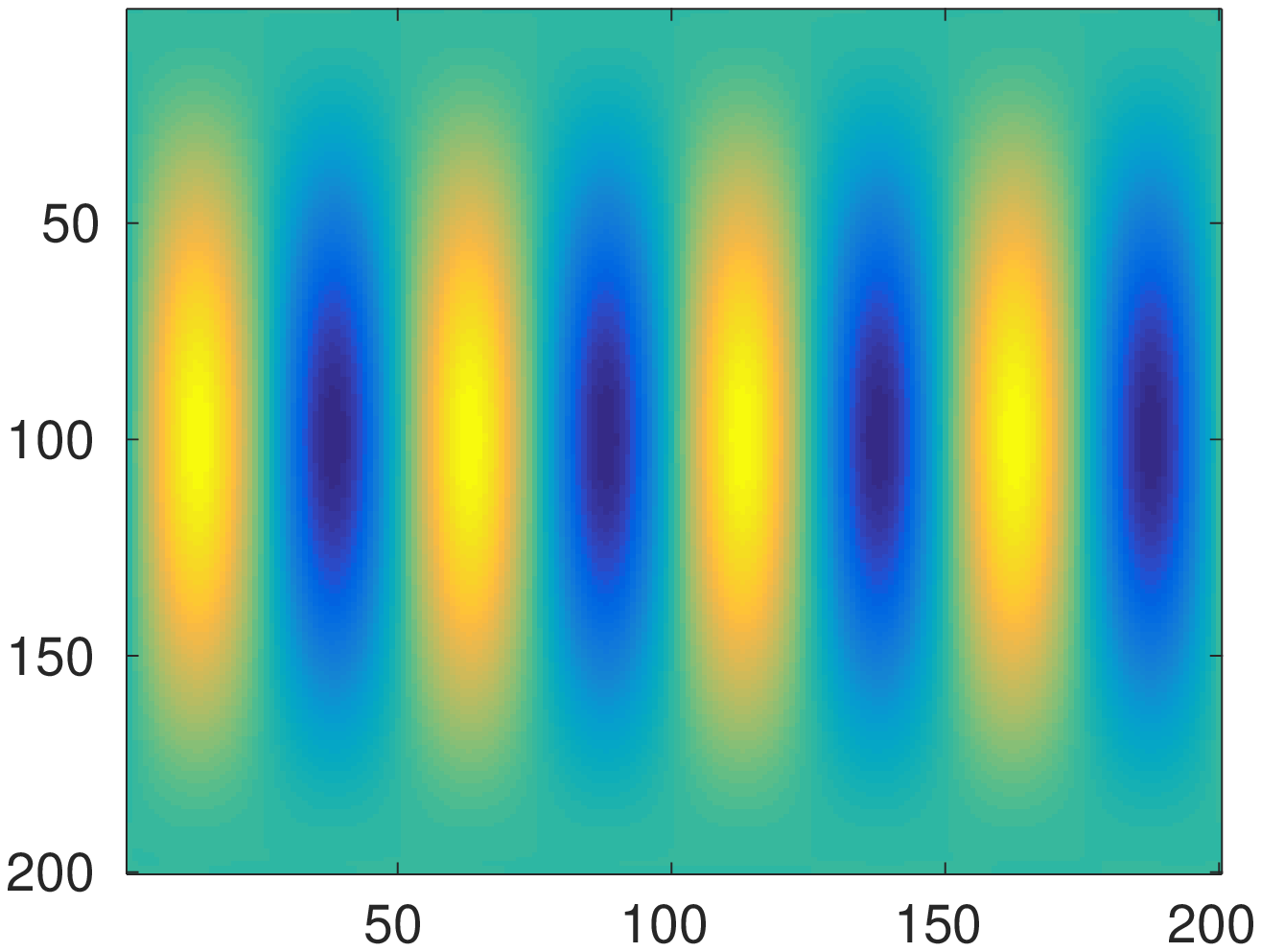} & \includegraphics[scale = .26]{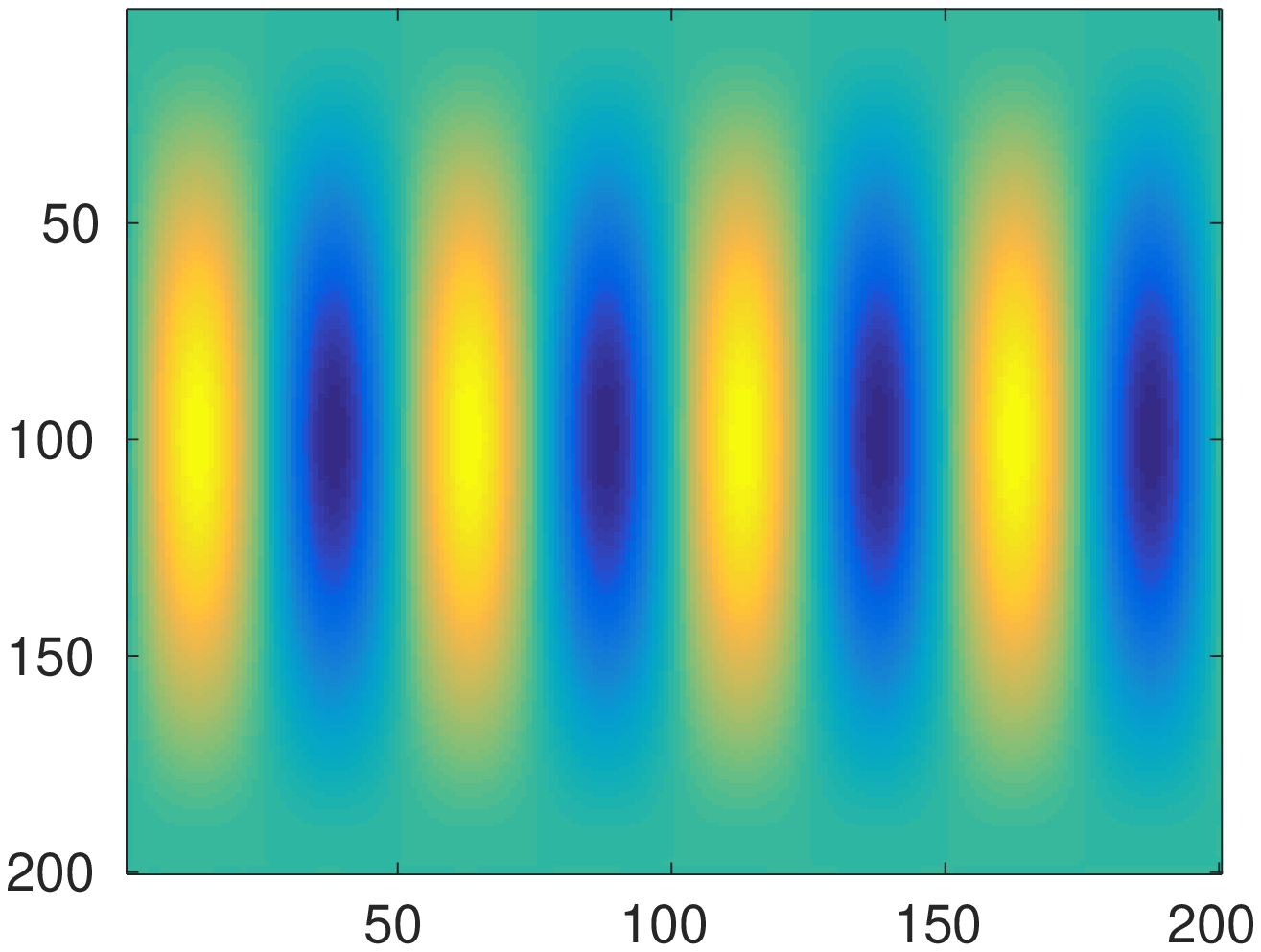} & \includegraphics[scale = .26]{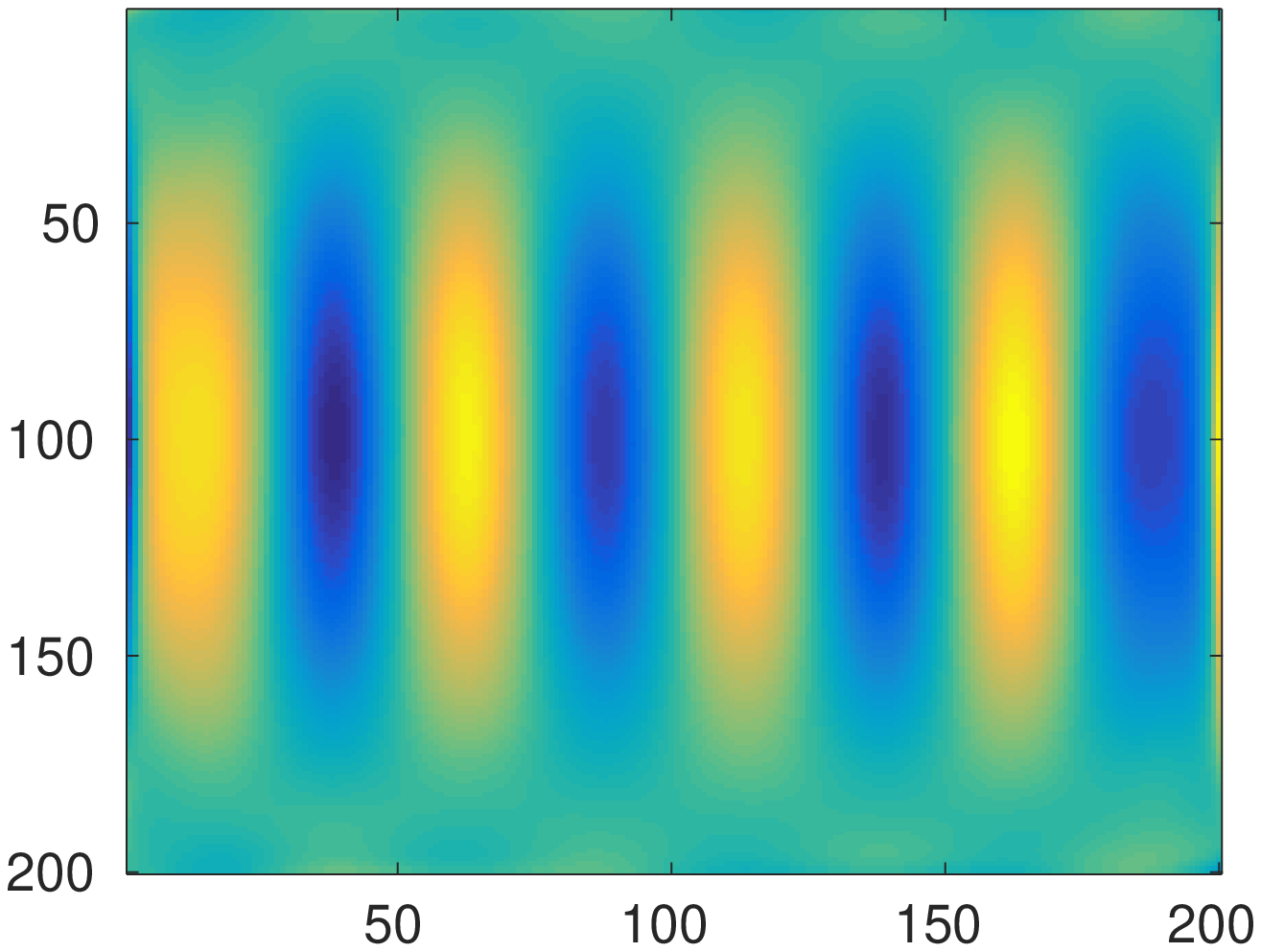}\\
\includegraphics[scale = .26]{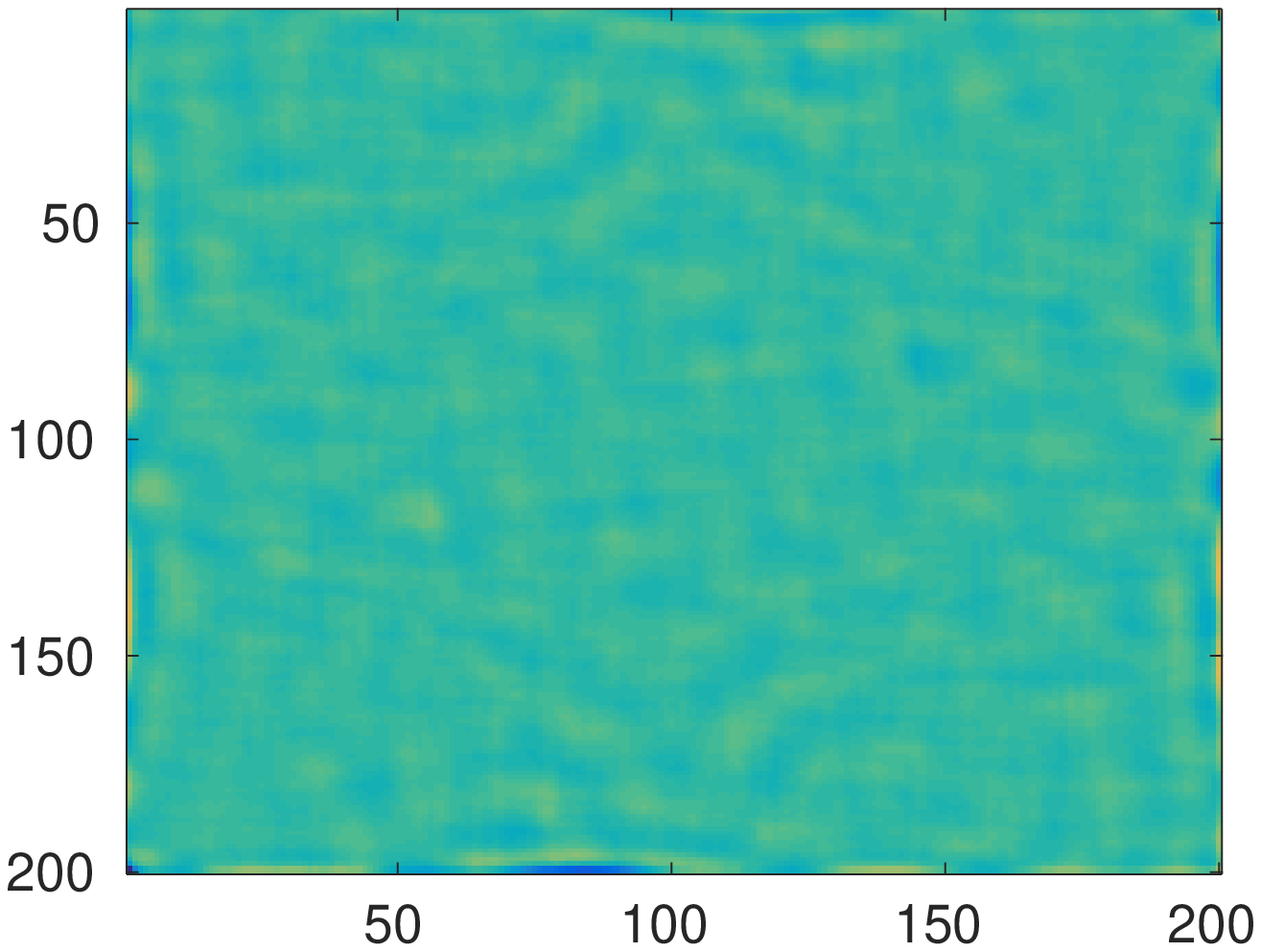} & \includegraphics[scale = .26]{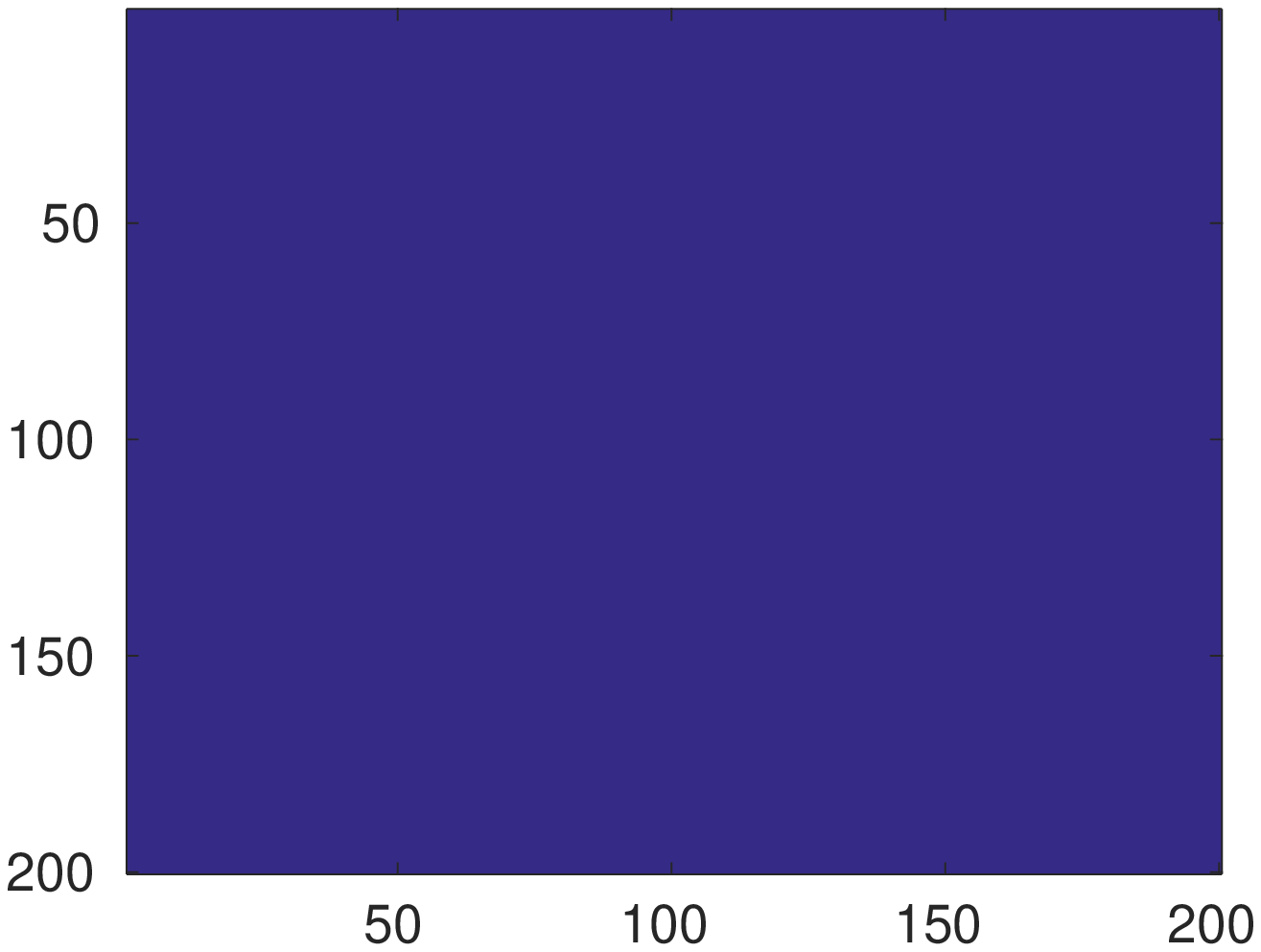} & \includegraphics[scale = .26]{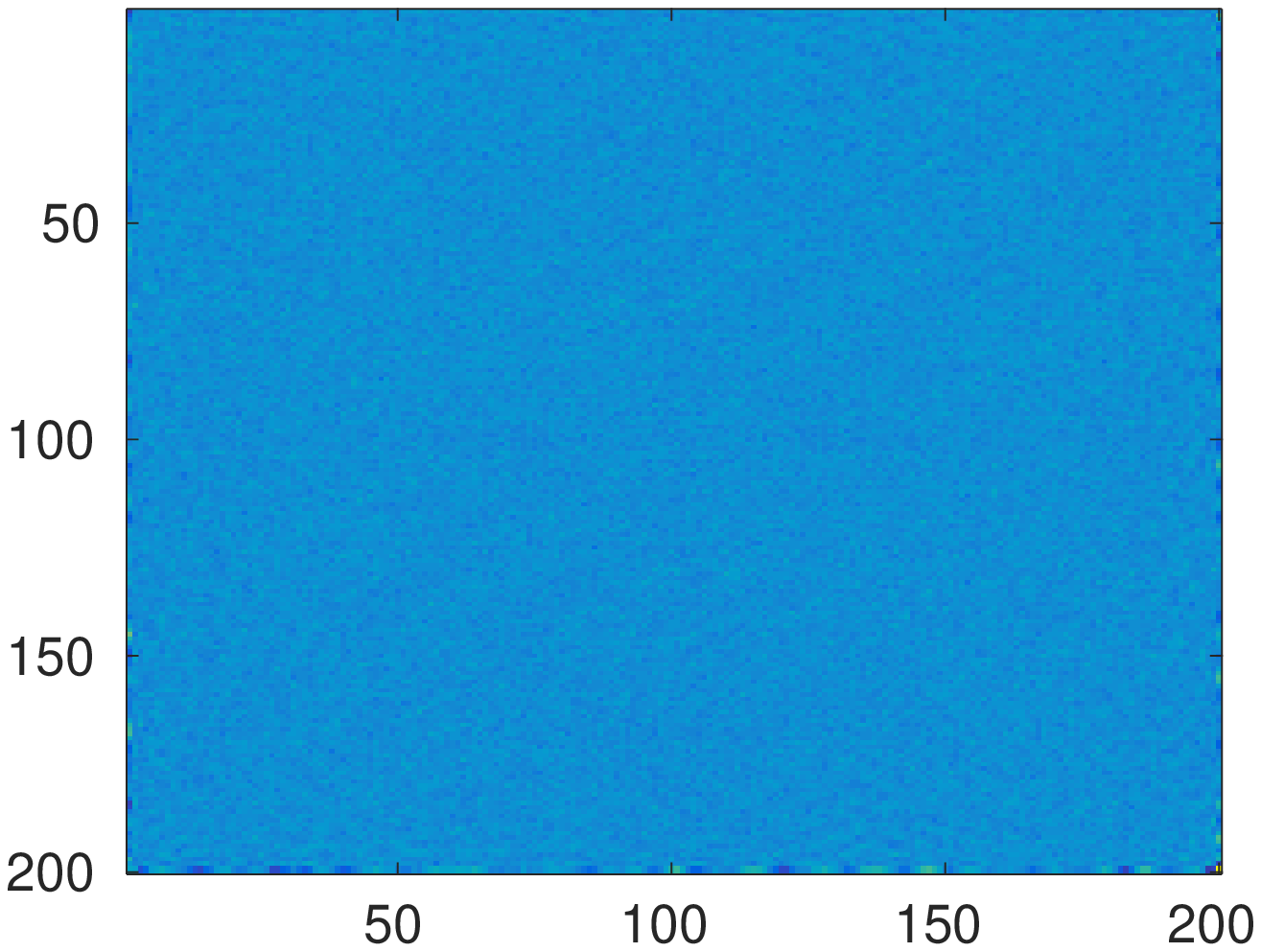}
\end{tabular}
 \caption{Reconstruction of $f_2(\vx)$ in Example \ref{ex:example2} with the A-F (top row) and C-C (bottom row) methods}
\label{fig:f2image_other}
\end{figure}

\begin{figure}[htbp]
\centering
        \subfigure[jittered sampling]{\includegraphics[scale = .15]{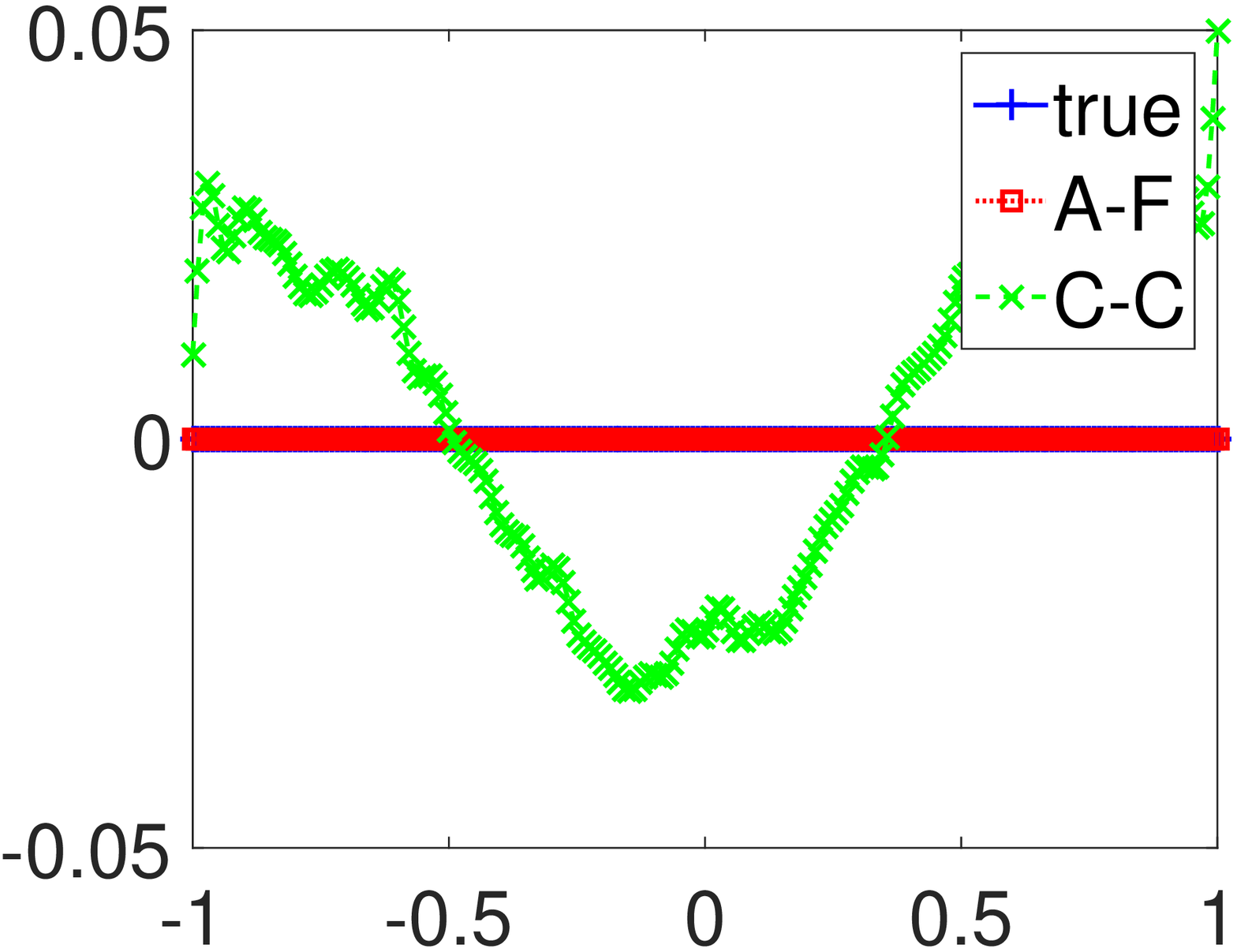}}
        \subfigure[rosette sampling]{\includegraphics[scale = .15]{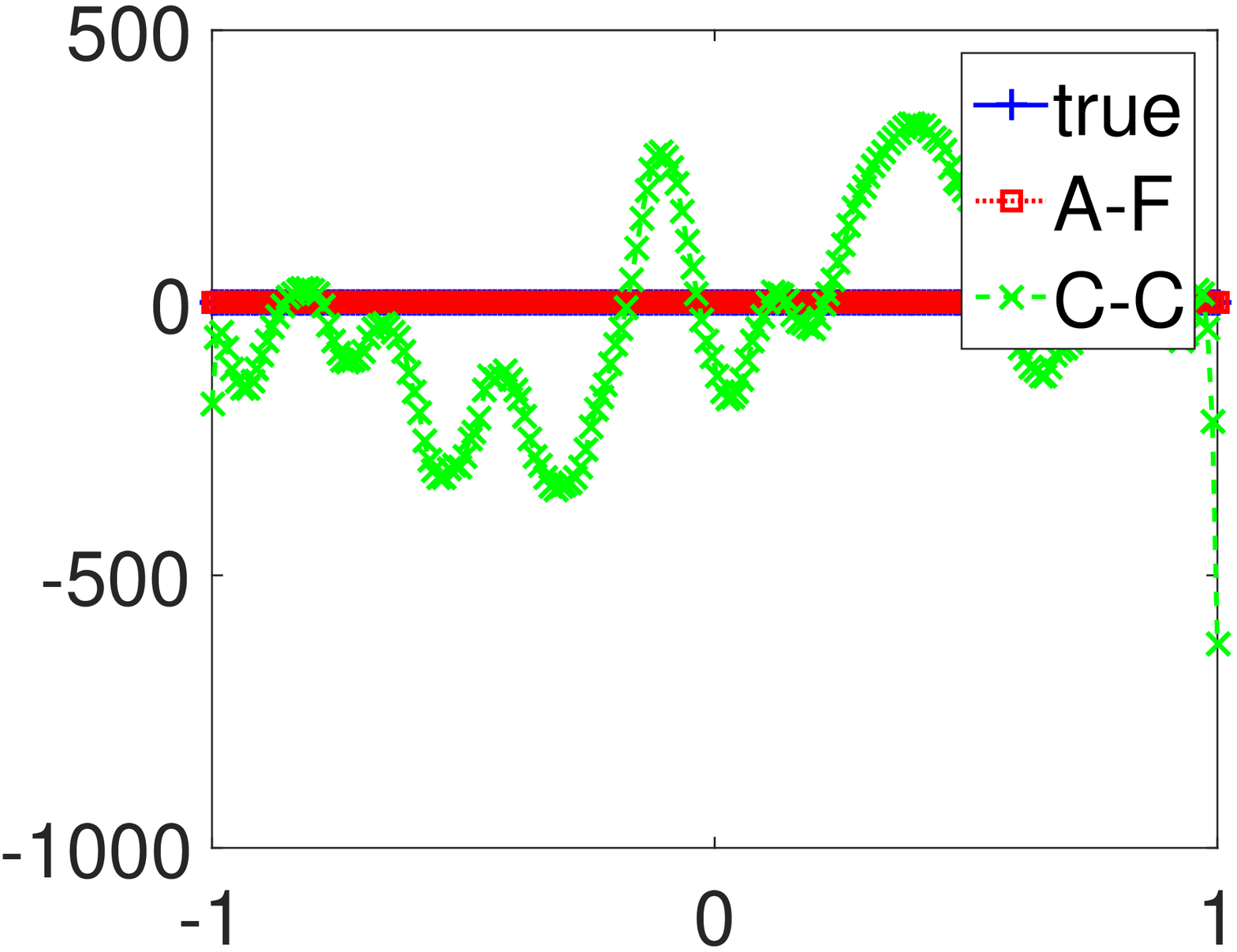}}
        \subfigure[spiral sampling]{\includegraphics[scale = .15]{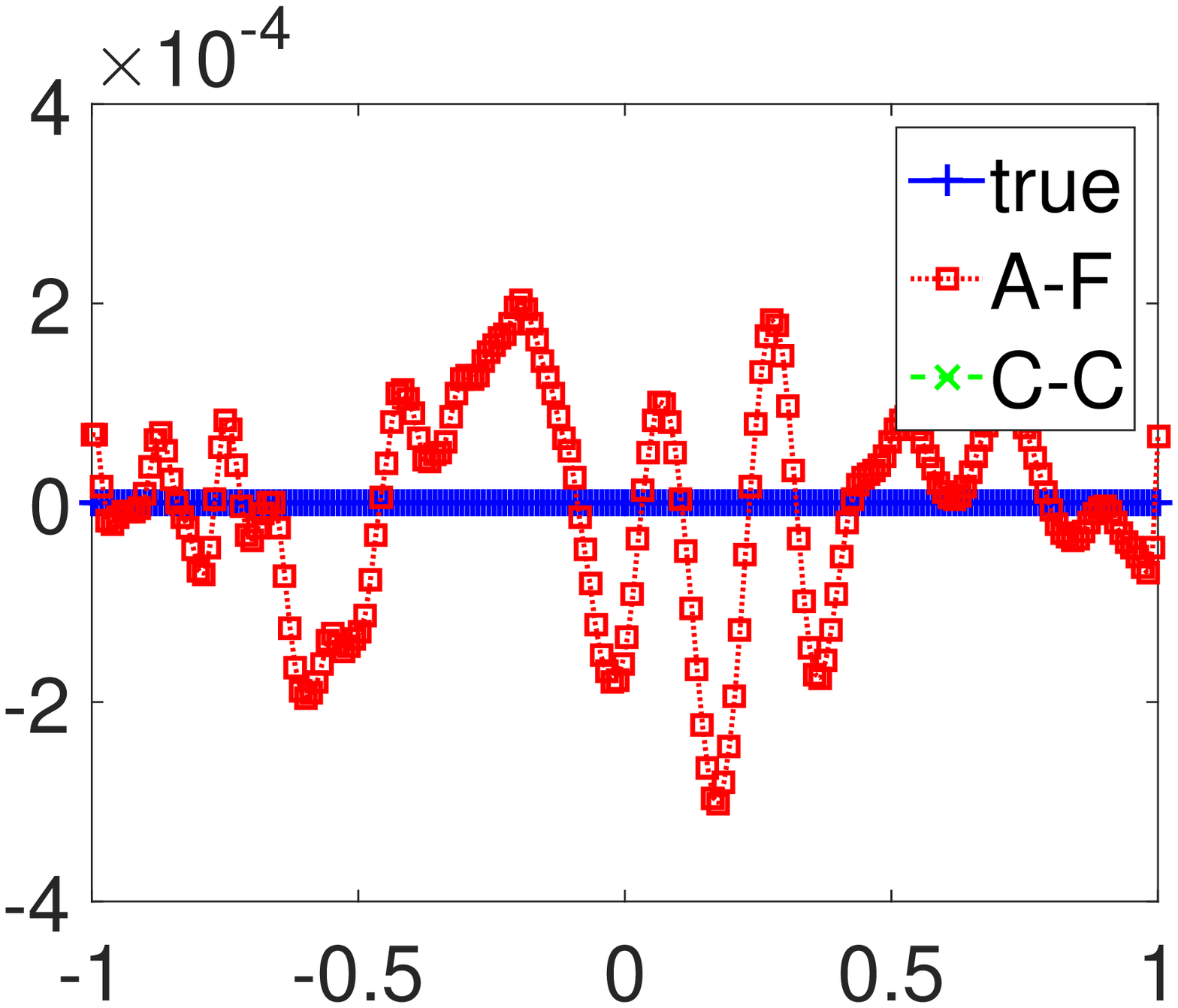}}
        \subfigure[polar sampling]{\includegraphics[scale = .15]{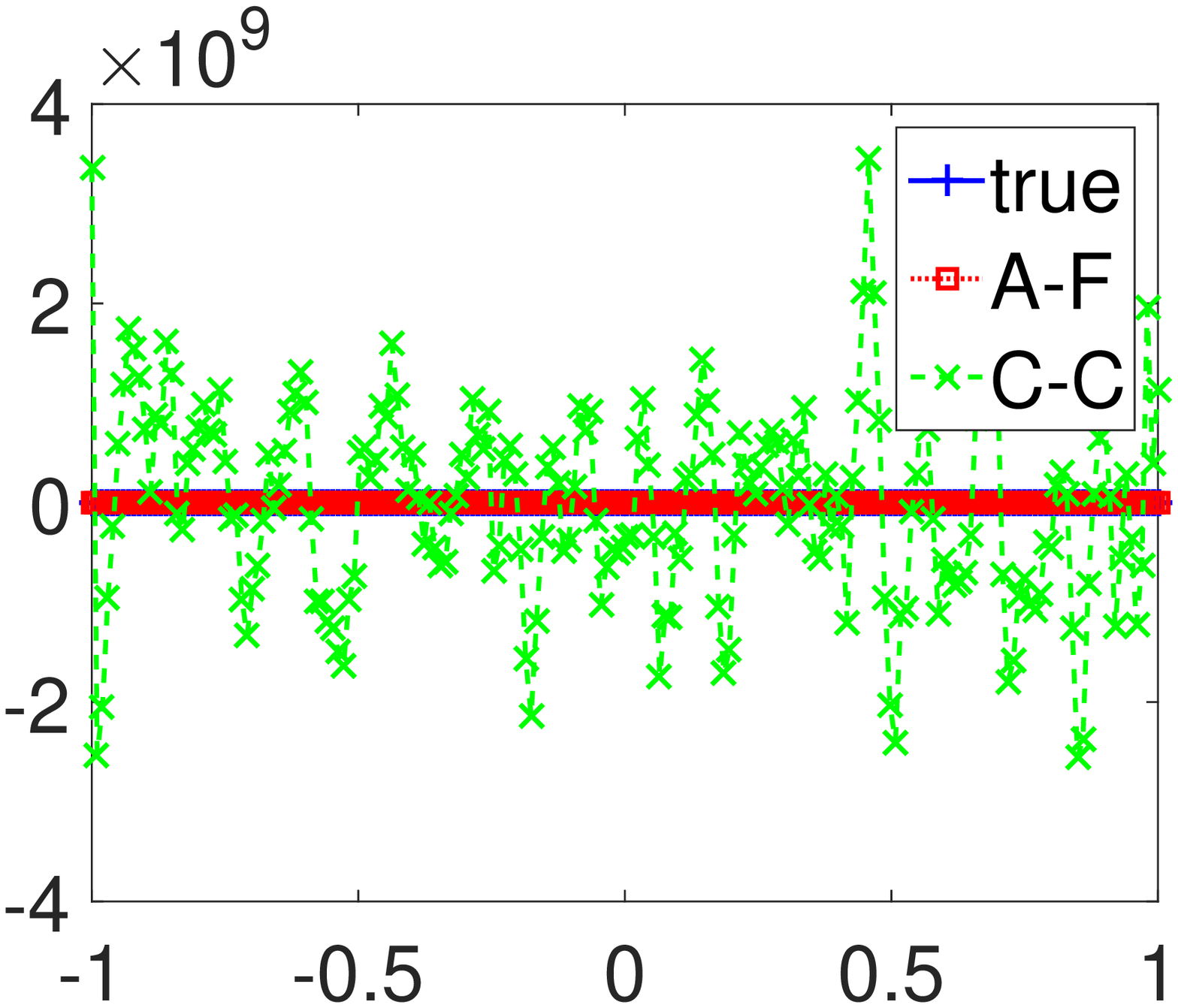}}
        \caption{One-dimensional cross section where $f_2(\vx) = 0$ in Example \ref{ex:example2} for all sampling patterns.}
\label{fig:f2slice}
\end{figure}


\section{Concluding Remarks}
\label{sec:conclusion}
In this paper we established that with some additional constraints, the admissible frame (A-F) method can be extended to two dimensions to accurately approximate the inverse frame operator of a localized frame.  Using our approach, we analyzed the convergence properties of the Casazza Christensen (C-C) method, \cite{Casazza2000, Christensen2000}, for the case when the sampling frame is well localized.  In applications where the given data  do not constitute a well-localized frame, the A-F method can be used to project the corresponding frame data onto a more suitable frame.  As a result, the target function may be approximated as a finite expansion with its asymptotic convergence solely dependent on its smoothness.  This is in contrast to the convergence of the C-C method, which heavily relies on the localization of the sampling frame.

Our numerical experiments demonstrate that when the sampling pattern constitutes a frame,  the A-F method converges more quickly than the C-C method.   Moreover, when the sampling pattern does not constitute a frame, the A-F method still provides a convergent reconstruction, while the C-C method may not.  Thus it appears that it may be possible to relax the admissible frame conditions in the convergence analysis for the A-F method, which will be the subject of future investigations.  Also, as was shown in \cite{GelbSong2014}, in the case of one-dimensional non-uniform Fourier data, the A-F method can be sped up using the NFFT algorithm.  Conversely, the NFFT algorithm demonstrates improved convergence when the A-F method is used to determine its parameters. Thus future investigations will also seek to exploit the relationship between NFFT and the A-F method in two dimensions to build a fast and accurate image reconstruction from non-uniformly sampled Fourier data.  Finally, we anticipate that the A-F method can also be used to identify edges in images, even when the data are heavily undersampled.  Thus our future work will also focus on applying the A-F method in applications where undersampling is common.

\bibliographystyle{siam}
\bibliography{frame}

\end{document}